\documentclass[11pt]{article}

\usepackage[T1]{fontenc}
\usepackage[english]{babel}
\usepackage[utf8]{inputenc}
\usepackage{amsmath,amssymb,amsfonts,amsthm}
\usepackage{graphicx}
\usepackage{enumitem}
\usepackage{bbold}
\usepackage{tikz}
\usepackage{caption}
\usepackage{subcaption}
\usepackage{comment}

\usepackage{comment}
\usepackage{listings} 
\usepackage{circuitikz}
\usepackage{authblk}
\usepackage[scale=0.75]{geometry}
\usepackage{stmaryrd} 
\usepackage[normalem]{ulem}

\theoremstyle{plain}
\newtheorem{theo}{Theorem}
\newtheorem{lemma}[theo]{Lemma}
\newtheorem{proposition}[theo]{Proposition}

\newtheorem{conjecture}[theo]{Conjecture}

\theoremstyle{definition}
\theoremstyle{remark}

\usepackage{color}
\usepackage{xcolor}
\usepgflibrary{patterns}
\usepgflibrary[patterns]
\usetikzlibrary{patterns}
\usetikzlibrary[patterns]

\newcommand{\ra}{\hspace{.1cm}\rightarrow\hspace{.1cm}} 
\newcommand{\la}{\hspace{.1cm}\leftarrow\hspace{.1cm}} 
\newcommand{\ua}{\hspace{.1cm}\uparrow\hspace{.1cm}} 
\newcommand{\da}{\hspace{.1cm}\downarrow\hspace{.1cm}} 
\newcommand{\nea}{\hspace{.1cm}\nearrow\hspace{.1cm}} 
\newcommand{\nwa}{\hspace{.1cm}\nwarrow\hspace{.1cm}} 
\newcommand{\sea}{\hspace{.1cm}\searrow\hspace{.1cm}} 
\newcommand{\swa}{\hspace{.1cm}\swarrow\hspace{.1cm}} 
\newcommand{\Stairs}{\mathcal{S}} 
\newcommand{\Diag}{\mathcal{D}} 
\newcommand{\name}{complete neighbor }
\newcommand{\eps}{\varepsilon}

\newcommand{\NN}{\mathbb{N}}

\newcommand{\RR}{\mathbb{R}}
\newcommand{\ZZ}{\mathbb{Z}}
\newcommand{\C}{\mathcal{C}}

\newcommand{\nn}{\texttt{NN}}
\newcommand{\cn}{\texttt{CN}}
\newcommand{\dec}{\texttt{dec}}

\newcommand{\step}{\mathcal{E}}
\newcommand{\gras}{\textbf}

\definecolor{codegreen}{rgb}{0,0.6,0}
\definecolor{codegray}{rgb}{0.5,0.5,0.5}
\definecolor{codepurple}{rgb}{0.58,0,0.82}
\definecolor{backcolour}{rgb}{0.94,0.94,0.94}
\definecolor{airforceblue}{rgb}{0.36, 0.54, 0.66}
\definecolor{beaublue}{rgb}{0.74, 0.83, 0.9}
\definecolor{coolblack}{rgb}{0.0, 0.18, 0.39}
\definecolor{aliceblue}{rgb}{0.94, 0.97, 1.0}
\definecolor{darkpastelgreen}{rgb}{0.01, 0.75, 0.24}
\lstdefinestyle{mystyle}{
    backgroundcolor=\color{backcolour},   
    commentstyle=\color{codegreen},
    keywordstyle=\color{magenta},
    numberstyle=\tiny\color{codegray},
    stringstyle=\color{codepurple},
    basicstyle=\ttfamily\footnotesize,
    breakatwhitespace=false,         
    breaklines=true,                 
    captionpos=b,                    
    keepspaces=true,                 
    numbers=left,                    
    numbersep=5pt,                  
    showspaces=false,                
    showstringspaces=false,
    showtabs=false,                  
    tabsize=2
}

\title{Gilbert's disc model conditioned on the square lattice}
\author[1]{J\'er\^ome Casse}
\author[2]{Ir\`ene Marcovici}
\author[2]{Maxence Poutrel}
\affil[1]{\small Université Paris-Saclay, CNRS,
Laboratoire de mathématiques d’Orsay, 91405 Orsay, France\\ \texttt{jerome.casse@universite-paris-saclay.fr}}
\affil[2]{\small Univ Rouen Normandie, CNRS, Normandie Univ, LMRS UMR 6085, F-76000 Rouen, France\newline \texttt{\{irene.marcovici,maxence.poutrel1\}@univ-rouen.fr}}

\begin{document}

\maketitle

\begin{abstract}
    We present a new percolation model on the two-dimensional lattice, which can be seen as a conditioned version of continuous percolation on the plane. Let us place a point uniformly at random in each cell of the grid $\ZZ^2$. These points correspond to the vertices of our graph, and we connect two points by an edge if their distance is less than a fixed radius $R$. We are interested in the radius from which there exists almost surely an infinite connected component. We also study two other critical radii specific to the geometry of our model: the smallest radius such that there exists a positioning of the points for which there is an infinite connected component, and the radius from which all points are connected to each other.
\end{abstract}

\section{Introduction}\label{sec:Intro}

Let $R\in \RR_+$ be a fixed radius, and let $d:\RR^2\times\RR^2\to\RR_+$ be a distance on $\RR^2$. In the following of the article, we mainly focus on $\mathcal{L}_p$ distance.
We consider a collection $P=(P_{i,j})_{(i,j)\in\ZZ^2} = (X_{i,j},Y_{i,j})_{(i,j)\in\ZZ^2}$ of points of $\RR^2$, such that for any $(i,j)\in\ZZ^2$, $X_{i,j}\in [i,i+1]$ and $Y_{i,j}\in [j,j+1]$. We denote the set of such collections by $\mathcal{P}=\prod_{(i,j)\in\ZZ^2} [i,i+1]\times[j,j+1]$.

\paragraph{Complete neighbor model.} For any $P\in\mathcal{P}$ we construct the undirected graph $\mathcal{G}(P) = (P,E)$ where $E = \{(P_u,P_v) : d(P_u,P_v)<R\}$, see the graph on the left of Figure~\ref{fig:Sim+Rep}.

We focus mainly on the projected version of $\mathcal{G}(P)$ on the lattice $\ZZ^2$, see the graph on the right of Figure~\ref{fig:Sim+Rep}, which we denote by $\Gamma_{d,R}^{\cn}(P)=(\ZZ^2,E_{d,R}^{\cn}(P))$ defined by 
$$E_{d,R}^{\cn}(P)=\{(u,v)\in\ZZ^2\times\ZZ^2: u \neq v \text{ and } d(P_u,P_v)\leq R\}.$$
We say that such a graph $\Gamma_{d,R}^{\cn}(P)$ is a configuration  of parameter $R$ for the \emph{complete neighbor model}. 
For a graph $G=(V,E)$, we say that two vertices $u, v\in V$ are \emph{neighbors} if $(u,v)\in E$. A \emph{path} between $u$ and $v$ is a sequence $(u_k)_{0\leq k\leq n}$ of vertices, with $u_0=u$ and $u_n=v$, such that $\forall k\in\{0,...,n-1\}$, $u_k$ and  $u_{k+1}$ are neighbors. We say that $u$ and $v$ are \emph{connected} if there exists a path between $u$ and $v$. When $V=\ZZ^2$, we denote by $\C_0(G)$ the \emph{connected component} of $(0,0)$, that is the set of vertices connected to $(0,0)$.

Our main focus is the typical connected components of $\Gamma_{d,R}^{\cn}(P)$ when the points of $P$ are chosen independently at random, with $P_{i,j}$ uniformly distributed in $[i,i+1]\times[j,j+1]$ for each $(i,j)\in \ZZ^2$. We denote by $\mathbb{P}$ the corresponding point distribution on $\mathcal{P}$, that is, the product distribution $\mathbb{P}=\bigotimes_{(i,j)\in\ZZ^2}\mathcal{U}([i,i+1]\times [j,j+1])$, where $\mathcal{U}(A)$ denotes the uniform distribution on $A$. 
Figure~\ref{fig:Sim+Rep} presents a simulation of the lattice-based Gilbert's disc model for the Euclidean distance. \par

\paragraph{Context and connection to Gilbert's disc model.} The complete neighbor model was first introduced in \cite{HM90}, as a new type of neighborhood for cellular automata, which allows one to observe more regular patterns than in the classical setting. 

Observe that it can also be seen as a conditioned version of Gilbert's disc model, for which points are distributed in the plane according to a Poisson process. In the most general framework of the Boolean model, also known as continuous percolation, the radii $R$ associated with the points form a family of i.i.d.~random variables~\cite{Gil61,MR96}. 

On its side, the nearest neighbour model that we introduce a little later can be interpreted as a variant of independent Bernoulli percolation on the square lattice~\cite{Gri99}, that presents local dependencies. 

Whether for discrete or continuous percolation models, a fundamental question that has been the subject of extensive research concerns the existence or not of an infinite connected component, depending on the values of the parameters.

\paragraph{Percolation radii.}
In our context, we define the \emph{percolation probability} as the probability that the connected component $\mathcal{C}_0(\Gamma_{d,R}^{\cn}(P))$ is infinite, that is, 
$$\theta_d^{\cn}(R) = \mathbb{P}(|\C_0(\Gamma_{d,R}^{\cn}(P))|=\infty),$$
and the \emph{critical radius} by
$$R_c^{\cn}(d) = \sup\{R>0 : \theta_d^{\cn}(R) = 0\}.$$

\begin{proposition}\label{prop:coupling}
    Let $d_1$ and $d_2$ be two distances on $\RR^2$, and let $R_1,R_2>0$.
    If for any $z\in\RR^2$, we have $B_{d_1}\left(z,R_1\right)\subset B_{d_2}\left(z,R_2\right)$,
    then for any $P\in\mathcal P$, $E_{d_1,R_1}^{\cn}(P)\subset E_{d_2,R_2}^{\cn}(P)$, and as a consequence, $\theta^{\cn}_{d_1}(R_1)\leq\theta^{\cn}_{d_2}(R_2).$
\end{proposition}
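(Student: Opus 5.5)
The plan is to prove the edge inclusion pointwise for each fixed configuration $P$, and then deduce the inequality between percolation probabilities by monotonicity, using the trivial coupling in which the same $P$ is used for both graphs. I read $B_d(z,R)$ as the closed ball $\{w\in\RR^2 : d(z,w)\leq R\}$. This matches the non-strict inequality $d(P_u,P_v)\leq R$ in the definition of $E_{d,R}^{\cn}(P)$.

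\textbf{Step 1 (edge inclusion).} Fix $P\in\mathcal P$ and take $(u,v)\in E_{d_1,R_1}^{\cn}(P)$. Then $u\neq v$ and $d_1(P_u,P_v)\leq R_1$, so $P_v\in B_{d_1}(P_u,R_1)$. Applying the hypothesis with $z=P_u$ gives $P_v\in B_{d_2}(P_u,R_2)$, that is, $d_2(P_u,P_v)\leq R_2$. Hence $(u,v)\in E_{d_2,R_2}^{\cn}(P)$. If the paper's balls were instead meant to be open, the same argument would go through verbatim with strict inequalities in both the balls and the edge condition. In any case the only content here is this one-line unfolding of the definitions.

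\textbf{Step 2 (components).} Since both graphs have vertex set $\ZZ^2$ and $E_{d_1,R_1}^{\cn}(P)\subset E_{d_2,R_2}^{\cn}(P)$, every path in $\Gamma_{d_1,R_1}^{\cn}(P)$ is also a path in $\Gamma_{d_2,R_2}^{\cn}(P)$. Therefore
\[
\C_0(\Gamma_{d_1,R_1}^{\cn}(P))\subset \C_0(\Gamma_{d_2,R_2}^{\cn}(P)).
\]
This gives the event inclusion
\[
\{P : |\C_0(\Gamma_{d_1,R_1}^{\cn}(P))|=\infty\}\subset\{P : |\C_0(\Gamma_{d_2,R_2}^{\cn}(P))|=\infty\}.
\]

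\textbf{Step 3 (probabilities).} Apply monotonicity of $\mathbb P$ to the event inclusion from Step 2. This yields $\theta^{\cn}_{d_1}(R_1)\leq\theta^{\cn}_{d_2}(R_2)$.

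The only point needing care is measurability of these events, which I expect to be the main (mild) obstacle. Each edge event $\{d(P_u,P_v)\leq R\}$ is closed in the product topology, because $d$ is continuous on $\RR^2$. Granting continuity (automatic for the $\mathcal L_p$ distances the paper focuses on), the event $\{|\C_0|=\infty\}$ is measurable: it equals $\bigcap_{n\geq 1}\{\C_0 \text{ contains a vertex at lattice distance at least } n \text{ from the origin}\}$, and each of these events is a countable union over finite paths of finite intersections of edge events.
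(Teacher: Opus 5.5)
Your proof is correct and is the straightforward argument the paper has in mind; the paper gives no details beyond calling it straightforward. You apply the ball hypothesis at $z=P_u$ to get the edge inclusion, pass to inclusion of the components of the origin, and conclude by monotonicity of $\mathbb{P}$ with the same configuration $P$ used for both graphs. Your closed-ball reading is the right one given the non-strict edge condition $d(P_u,P_v)\leq R$, and your measurability remark is a reasonable addition that the paper leaves implicit.
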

 
The proof is straightforward. 
As a consequence of Proposition~\ref{prop:coupling}, the percolation probability $R\mapsto\theta_d^{\cn}(R)$ is non-decreasing. Moreover, if the model is invariant by translation (that is the case later when we consider a distance derived from a norm), there exists two distinct regimes: for $R<R_c^{\cn}(d)$, the graph $\Gamma_{d,R}^{\cn}(P)$ has almost surely no infinite connected component (sub-critical regime), while for $R>R_c^{\cn}(d)$, it has almost surely at least one infinite connected component (super-critical regime).

The geometry of our model also leads us to introduce two other types of critical radius, namely the total connectivity radius and the possible connectivity radius. 

The \emph{total connectivity radius} is the smallest radius from which all points are inside an infinite connected component. It is defined by
$$R_{\max}^{\cn}(d)=\inf\{R>0 : \forall P\in \mathcal{P}, |\mathcal{C}_0(\Gamma_{d,R}^{\cn}(P))|=\infty\}.$$
The \emph{possible connectivity radius} is the smallest radius from which an infinite connected component becomes possible, in the sense that 
$$R_{\min}^{\cn}(d) = \inf\{R>0 : \exists P\in\mathcal{P}, |\C_0(\Gamma_{d,R}^{\cn}(P))|=\infty\}.$$

We clearly have the following inequalities
\begin{equation*}
    R_{\min}^{\cn}(d) \leq R_c^{\cn}(d) \leq R_{\max}^{\cn}(d).
\end{equation*}

\paragraph{Nearest neighbor model.} For a given distance $d$, we also consider the \emph{nearest neighbor} model that consists in restricting the possible connections to the four adjacent cells.
Precisely, the set of edges $E_{d,R}^{\nn}$ of the new graph $\Gamma_{d,R}^{\nn}(P)$ obtained is then given by
$$E_{d,R}^{\nn}=E_{d,R}^{\cn} \cap \{(u,v)\in\ZZ^2\times \ZZ^2 : ||u-v||_1=1\}.$$

\begin{figure}
    \centering
    \hspace{-1.5cm}\begin{minipage}{.35\textwidth}
    \scalebox{.65}{%
    \input{Configuration_p=2,_R=1,2,_N=5}
    }
    \end{minipage}
    \hspace{3cm}
    \begin{minipage}{.35\textwidth}
    \scalebox{.65}{%
    \begin{tikzpicture}[scale = 1]
[inner sep=2mm,    place/.style={circle,draw=blue!50,fill=blue!20,thick},    transition/.style={rectangle,draw=black!50,fill=black!20,thick}]
\draw[step=1cm,color=gray,ultra thin,dashed] (-5.4,-5.4) grid (5.4,5.4);
\begin{scope}[thick]
\draw (-5,-4) -- (-5,-3);
\draw (-5,-4) -- (-4,-4);
\draw (-5,-3) -- (-4,-3);
\draw (-5,-3) -- (-4,-2);
\draw (-5,-2) -- (-5,-1);
\draw (-5,-2) -- (-4,-2);
\draw (-5,-1) -- (-5,0);
\draw (-5,0) -- (-5,1);
\draw (-5,0) -- (-4,0);
\draw (-5,1) -- (-4,1);
\draw (-5,2) -- (-4,2);
\draw (-5,3) -- (-5,4);
\draw (-5,3) -- (-4,3);
\draw (-5,5) -- (-4,5);
\draw (-4,-4) -- (-5,-3);
\draw (-4,-4) -- (-4,-3);
\draw (-4,-3) -- (-4,-2);
\draw (-4,-3) -- (-3,-2);
\draw (-4,-2) -- (-3,-2);
\draw (-4,-1) -- (-5,0);
\draw (-4,-1) -- (-4,0);
\draw (-4,-1) -- (-3,-1);
\draw (-4,0) -- (-5,1);
\draw (-4,1) -- (-4,2);
\draw (-4,2) -- (-4,3);
\draw (-4,3) -- (-5,4);
\draw (-4,3) -- (-3,3);
\draw (-4,4) -- (-4,5);
\draw (-4,4) -- (-3,4);
\draw (-4,4) -- (-3,5);
\draw (-4,5) -- (-3,5);
\draw (-3,-5) -- (-2,-5);
\draw (-3,-4) -- (-3,-3);
\draw (-3,-4) -- (-2,-4);
\draw (-3,-4) -- (-2,-3);
\draw (-3,-4) -- (-2,-2);
\draw (-3,-3) -- (-2,-3);
\draw (-3,-3) -- (-2,-2);
\draw (-3,-2) -- (-3,-1);
\draw (-3,-2) -- (-2,-2);
\draw (-3,-1) -- (-3,0);
\draw (-3,-1) -- (-2,0);
\draw (-3,0) -- (-3,1);
\draw (-3,0) -- (-2,0);
\draw (-3,0) -- (-2,1);
\draw (-3,1) -- (-2,1);
\draw (-3,2) -- (-2,2);
\draw (-3,3) -- (-4,4);
\draw (-3,3) -- (-3,4);
\draw (-3,3) arc [start angle=-90, end angle=90,x radius=.2cm, y radius=1.0cm];
\draw (-3,3) -- (-2,3);
\draw (-3,4) -- (-3,5);
\draw (-3,4) -- (-2,4);
\draw (-3,4) -- (-2,5);
\draw (-3,5) -- (-2,5);
\draw (-2,-5) -- (-1,-5);
\draw (-2,-5) -- (-1,-4);
\draw (-2,-4) -- (-3,-3);
\draw (-2,-4) -- (-2,-3);
\draw (-2,-4) -- (-1,-4);
\draw (-2,-4) -- (-1,-3);
\draw (-2,-3) -- (-2,-2);
\draw (-2,-3) -- (-1,-3);
\draw (-2,-1) -- (-3,0);
\draw (-2,-1) -- (-2,0);
\draw (-2,-1) -- (-1,-1);
\draw (-2,0) -- (-2,1);
\draw (-2,2) -- (-2,3);
\draw (-2,2) -- (-1,2);
\draw (-2,2) -- (-1,3);
\draw (-2,3) -- (-3,4);
\draw (-2,3) -- (-2,4);
\draw (-2,4) -- (-2,5);
\draw (-2,4) -- (-1,4);
\draw (-1,-5) -- (-1,-4);
\draw (-1,-4) -- (-2,-3);
\draw (-1,-4) -- (-1,-3);
\draw (-1,-3) -- (0,-3);
\draw (-1,-2) -- (-1,-1);
\draw (-1,-2) -- (0,-2);
\draw (-1,-1) -- (-1,0);
\draw (-1,0) -- (0,0);
\draw (-1,1) -- (0,1);
\draw (-1,1) -- (0,2);
\draw (-1,2) -- (-1,3);
\draw (-1,2) -- (0,2);
\draw (-1,4) -- (0,4);
\draw (-1,4) -- (0,5);
\draw (-1,5) -- (0,5);
\draw (0,-5) -- (1,-5);
\draw (0,-4) -- (-1,-3);
\draw (0,-4) -- (0,-3);
\draw (0,-4) -- (1,-4);
\draw (0,-4) -- (1,-3);
\draw (0,-3) -- (1,-3);
\draw (0,-2) -- (0,-1);
\draw (0,-2) -- (1,-2);
\draw (0,-1) -- (1,-1);
\draw (0,-1) -- (1,0);
\draw (0,0) -- (-1,1);
\draw (0,0) -- (0,1);
\draw (0,1) -- (0,2);
\draw (0,1) -- (1,1);
\draw (0,2) -- (1,2);
\draw (0,3) -- (-1,4);
\draw (0,3) -- (0,4);
\draw (0,3) -- (1,3);
\draw (0,4) -- (0,5);
\draw (0,5) -- (1,5);
\draw (1,-5) -- (2,-4);
\draw (1,-4) -- (0,-3);
\draw (1,-4) -- (1,-3);
\draw (1,-4) -- (2,-4);
\draw (1,-3) -- (0,-2);
\draw (1,-3) -- (1,-2);
\draw (1,-2) -- (0,-1);
\draw (1,-2) -- (2,-2);
\draw (1,-2) -- (2,-1);
\draw (1,-1) -- (1,0);
\draw (1,-1) -- (2,-1);
\draw (1,-1) -- (2,0);
\draw (1,0) -- (2,0);
\draw (1,1) -- (2,1);
\draw (1,1) -- (2,2);
\draw (1,2) -- (0,3);
\draw (1,2) -- (1,3);
\draw (1,3) -- (0,4);
\draw (1,3) -- (1,4);
\draw (1,3) -- (2,3);
\draw (1,4) -- (0,5);
\draw (1,4) -- (1,5);
\draw (1,4) -- (2,4);
\draw (1,4) -- (2,5);
\draw (1,5) -- (2,5);
\draw (2,-5) -- (2,-4);
\draw (2,-5) -- (3,-5);
\draw (2,-3) -- (2,-2);
\draw (2,-3) -- (3,-3);
\draw (2,-3) -- (3,-2);
\draw (2,-2) -- (2,-1);
\draw (2,-2) -- (3,-2);
\draw (2,-2) -- (3,-1);
\draw (2,-1) -- (3,-1);
\draw (2,0) -- (1,1);
\draw (2,0) -- (2,1);
\draw (2,0) -- (3,0);
\draw (2,1) -- (2,2);
\draw (2,2) -- (3,2);
\draw (2,4) -- (1,5);
\draw (2,4) -- (2,5);
\draw (3,-4) -- (3,-3);
\draw (3,-4) -- (4,-4);
\draw (3,-3) -- (4,-3);
\draw (3,-2) -- (3,-1);
\draw (3,-2) -- (4,-2);
\draw (3,-2) -- (4,-1);
\draw (3,-1) -- (3,0);
\draw (3,-1) -- (4,-1);
\draw (3,1) -- (3,2);
\draw (3,1) -- (4,1);
\draw (3,2) -- (3,3);
\draw (3,4) -- (4,4);
\draw (3,4) arc [start angle=180, end angle=360,x radius=1.0cm, y radius=.2cm];
\draw (3,5) -- (4,5);
\draw (4,-5) -- (3,-4);
\draw (4,-5) -- (4,-4);
\draw (4,-4) -- (3,-3);
\draw (4,-3) -- (4,-2);
\draw (4,-3) -- (5,-3);
\draw (4,-3) -- (5,-2);
\draw (4,-2) -- (4,-1);
\draw (4,-2) -- (5,-2);
\draw (4,-2) -- (5,-1);
\draw (4,-1) -- (3,0);
\draw (4,0) -- (4,1);
\draw (4,0) -- (5,1);
\draw (4,1) -- (4,2);
\draw (4,1) -- (5,1);
\draw (4,1) -- (5,2);
\draw (4,2) -- (3,3);
\draw (4,2) -- (4,3);
\draw (4,2) -- (5,2);
\draw (4,3) -- (4,4);
\draw (4,3) -- (5,4);
\draw (4,4) -- (5,4);
\draw (5,-4) -- (5,-3);
\draw (5,-3) -- (5,-2);
\draw (5,-2) -- (5,-1);
\draw (5,-1) -- (5,0);
\draw (5,1) -- (5,2);
\draw (5,3) -- (4,4);
\draw (5,3) -- (5,4);
\end{scope}
\foreach \i in {-5,...,5}{ 
    \foreach \j in {-5,...,5}{
        \node [fill=black,inner sep=1pt,shape=circle] at (\i,\j) {};
    }
}
\node [fill=red,inner sep=2pt,shape=circle,draw=black] at (0,0) {};
\end{tikzpicture}
    }
    \end{minipage}
    \caption{On the left, a simulation of the lattice-based Gilbert's disc model for the Euclidean distance (parameter $p=2$), with $R=1.2$, and on the right, the graph $\Gamma_{d,R}^{\cn}(P)$ resulting from this simulation.}
    \label{fig:Sim+Rep}
\end{figure}

 We denote the corresponding radii with a $\nn$ exponent. Again, we have
\begin{equation*}
    R_{\min}^{\nn}(d) \leq R_c^{\nn}(d) \leq R_{\max}^{\nn}(d).
\end{equation*}
And since $E_{d,R}^{\nn}\subset E_{d,R}^{\cn}$, it follows that

\begin{displaymath}
    R_{\min}^{\cn}(d) \leq R_{\min}^{\nn}(d)\text{, }
    R_{c}^{\cn}(d) \leq R_{c}^{\nn}(d)\text{, and }
    R_{\max}^{\cn}(d) \leq R_{\max}^{\nn}(d).
\end{displaymath}

\begin{proposition}\label{prop:couplingNN}
    Proposition~\ref{prop:coupling} still holds for the nearest neighbor model.
\end{proposition}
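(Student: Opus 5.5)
The plan is to reduce everything to the complete neighbor case, using the fact that $E^{\nn}_{d,R}$ is $E^{\cn}_{d,R}$ intersected with a set that depends neither on $d$ nor on $R$. We assume, as in Proposition~\ref{prop:coupling}, that $B_{d_1}(z,R_1)\subset B_{d_2}(z,R_2)$ for every $z\in\RR^2$, and fix $P\in\mathcal P$. Proposition~\ref{prop:coupling} gives $E^{\cn}_{d_1,R_1}(P)\subset E^{\cn}_{d_2,R_2}(P)$. If one prefers not to depend on the closed/open ball convention, the same inclusion follows directly: for $(u,v)$ with $d_1(P_u,P_v)\le R_1$, we have $P_v$ in the closed $d_1$-ball of radius $R_1$ around $P_u$. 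This closed ball is the intersection over $\eps>0$ of the open balls of radius $R_1+\eps$, so the inclusion hypothesis, applied at slightly larger radii or read for closed balls, gives $d_2(P_u,P_v)\le R_2$. We then intersect both sides with the fixed set $N=\{(u,v)\in\ZZ^2\times\ZZ^2 : \|u-v\|_1=1\}$ and obtain
\[
E^{\nn}_{d_1,R_1}(P)=E^{\cn}_{d_1,R_1}(P)\cap N\subset E^{\cn}_{d_2,R_2}(P)\cap N=E^{\nn}_{d_2,R_2}(P).
\]

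For the probabilistic statement, we use the deterministic inclusion as a coupling. Both graphs are built from the same $P$ and have vertex set $\ZZ^2$, so $\Gamma^{\nn}_{d_1,R_1}(P)$ is a subgraph of $\Gamma^{\nn}_{d_2,R_2}(P)$. Any path from $(0,0)$ in the first graph is therefore a path in the second, which gives $\C_0(\Gamma^{\nn}_{d_1,R_1}(P))\subset\C_0(\Gamma^{\nn}_{d_2,R_2}(P))$. Hence
\[
\{|\C_0(\Gamma^{\nn}_{d_1,R_1}(P))|=\infty\}\subset\{|\C_0(\Gamma^{\nn}_{d_2,R_2}(P))|=\infty\}.
\]
Taking $\mathbb P$-probabilities yields $\theta^{\nn}_{d_1}(R_1)\le\theta^{\nn}_{d_2}(R_2)$.

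There is no real obstacle. The only point needing care is the step from the ball inclusion to the edge inclusion, which is the open versus closed ball issue above and is already settled by Proposition~\ref{prop:coupling}. One should also check that the events are measurable. Each edge indicator is a measurable function of $(P_u,P_v)$, and the event that the cluster is infinite is a countable intersection over $n$ of countable unions of path events, so it is measurable.
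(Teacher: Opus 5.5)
Your proof is correct and is the argument the paper intends; the paper gives no separate proof of this statement, and calls Proposition~\ref{prop:coupling} straightforward. Since $E^{\nn}_{d,R}(P)=E^{\cn}_{d,R}(P)\cap N$ with $N$ independent of $(d,R)$, the edge inclusion passes to the nearest neighbor graphs, and the monotone coupling then gives $\theta^{\nn}_{d_1}(R_1)\le\theta^{\nn}_{d_2}(R_2)$. One caveat on your side remark: you cannot apply the hypothesis ``at slightly larger radii'', since it is only given at $R_1,R_2$, and for a general distance inclusion of open balls does not imply inclusion of closed balls. This is harmless, because your main line relies on Proposition~\ref{prop:coupling} itself, and for norms the closed ball is the closure of the open ball anyway.
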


\paragraph{$\boldmath \mathcal{L}^p-$norm.}
 Now, we consider only distances $d$ resulting from a $\mathcal{L}^p-$norm on $\RR^2$, i.e.\ for any $Q,Q' \in \RR^2$, $d(Q,Q')=\|Q-Q'\|_p$, where for $Q=(x,y)$,

$$\|Q\|_p = \left \{
    \begin{array}{ll}
		\big(|x|^p + |y|^p)^{\frac{1}{p}} &\mbox{ if } p\in[1,\infty);\\
		\max\{|x|,|y|\} &\mbox{ if } p=\infty.\\
	\end{array}
	\right.
$$

From now on, when the distance considered results from the $\mathcal{L}^p-$norm, we put the value $p\in[1,\infty]$ as a parameter of the graph, the set of edges and the radii instead of $d$. 

We now state the main theorems of this article, which provide exact values or boundaries of the different radii when the distance $d$ is derived from $\mathcal{L}^p-$norm, for both the \name model and the nearest neighbor one. 

\begin{proposition}
    Let $p_1,p_2\in[1,\infty]$ with $p_1\leq p_2$. We have
    \begin{displaymath}
    R_{\min}^{\cn}(p_2) \leq R_{\min}^{\cn}(p_1)\text{, }
    R_{c}^{\cn}(p_2) \leq R_{c}^{\cn}(p_1)\text{, and }
    R_{\max}^{\cn}(p_2) \leq R_{\max}^{\cn}(p_1).
\end{displaymath}
    The same property holds for the nearest neighbor model.
\end{proposition}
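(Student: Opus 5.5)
The plan is to reduce everything to Proposition~\ref{prop:coupling} (and Proposition~\ref{prop:couplingNN} for the nearest neighbor model), using the classical monotonicity of $\mathcal{L}^p$-norms: for $1\leq p_1\leq p_2\leq\infty$ and every $Q\in\RR^2$ we have $\|Q\|_{p_2}\leq\|Q\|_{p_1}$. I would first recall or quickly justify this inequality. If $p_2=\infty$ it is immediate, since $\max\{|x|,|y|\}\leq(|x|^{p_1}+|y|^{p_1})^{1/p_1}$. Otherwise, by homogeneity we may assume $\|Q\|_{p_1}=1$. Then $|x|,|y|\leq 1$, so $|x|^{p_2}+|y|^{p_2}\leq|x|^{p_1}+|y|^{p_1}=1$, hence $\|Q\|_{p_2}\leq 1$.

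From this inequality we get the ball inclusion $B_{p_1}(z,R)\subset B_{p_2}(z,R)$ for all $z\in\RR^2$ and $R>0$. Whether the balls are open or closed does not matter, since the inclusion holds either way. Proposition~\ref{prop:coupling}, applied with $d_1=\|\cdot\|_{p_1}$, $d_2=\|\cdot\|_{p_2}$ and $R_1=R_2=R$, then gives, for every $P\in\mathcal{P}$ and every $R>0$,
\[
E^{\cn}_{p_1,R}(P)\subset E^{\cn}_{p_2,R}(P),
\]
and $\theta^{\cn}_{p_1}(R)\leq\theta^{\cn}_{p_2}(R)$. By Proposition~\ref{prop:couplingNN} the same statements hold for $\nn$; this can also be seen directly by intersecting both edge sets with the nearest-neighbor edges. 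Since $E_{p_1,R}(P)\subset E_{p_2,R}(P)$, the component $\C_0$ for $p_1$ is contained in the component for $p_2$, so $|\C_0(\Gamma_{p_1,R}(P))|=\infty$ implies $|\C_0(\Gamma_{p_2,R}(P))|=\infty$.

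The three inequalities then follow by comparing the sets in the definitions.

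\begin{itemize}
\item For $R_{\min}$: the set $\{R:\exists P,\ |\C_0(\Gamma_{p_1,R}(P))|=\infty\}$ is contained in the corresponding set for $p_2$, so its infimum is at least the infimum for $p_2$, i.e.\ $R_{\min}(p_2)\leq R_{\min}(p_1)$.
\item For $R_{\max}$: the set $\{R:\forall P,\ |\C_0(\Gamma_{p_1,R}(P))|=\infty\}$ is likewise contained in the $p_2$ set, giving $R_{\max}(p_2)\leq R_{\max}(p_1)$.
\item For $R_c$: the inclusion goes the other way, $\{R:\theta_{p_2}(R)=0\}\subset\{R:\theta_{p_1}(R)=0\}$, because $\theta_{p_1}\leq\theta_{p_2}$. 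Taking suprema gives $R_c(p_2)\leq R_c(p_1)$.
\end{itemize}

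No step is a genuine obstacle. The only points needing care are the direction of each set inclusion (infimum versus supremum), the edge case $p_2=\infty$, and the convention for empty sets in the definitions. For instance, if $\{R:\theta=0\}$ were empty the supremum would be $0$ by convention, and the inequality still holds since the sets are nested.
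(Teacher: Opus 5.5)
Your proposal is correct and follows the paper's proof. Both arguments use the inclusion of $\mathcal{L}^{p_1}$-balls in $\mathcal{L}^{p_2}$-balls together with Proposition~\ref{prop:coupling} (and Proposition~\ref{prop:couplingNN}) to get $E_{p_1,R}(P)\subset E_{p_2,R}(P)$, so that an infinite component for $p_1$ stays infinite for $p_2$. You simply make explicit what the paper leaves implicit: the norm inequality, and the direction of each set inclusion, including the reversed one for $R_c$ coming from $\theta_{p_1}\leq\theta_{p_2}$.
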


\begin{proof}
    By Proposition~\ref{prop:coupling}, we have for any $P$ and $R$, $E_{p_1,R}^{\cn}(P)\subset E_{p_2,R}^{\cn}(P)$. Especially, if the connected component is infinite for the distance derived from the $\mathcal{L}^{p_1}-$norm, it is also infinite when the distance considered is derived from the $\mathcal{L}^{p_2}-$norm.
    This conclude the proof for the complete neighbor model. 
    By Proposition~\ref{prop:couplingNN}, the arguments are similar for the nearest neighbor model.
\end{proof}

\paragraph{Main results.} For the total connectivity radius, the exact values are given in the following theorem.
\begin{theo}\label{thm:Rmax}
    For $p\in[1,\infty]$, we have 
    $$R_{\max}^{\cn}(p) = R_{\max}^{\nn}(p) = \|(2,1)\|_p.$$
\end{theo}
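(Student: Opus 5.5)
The plan is to prove the two inequalities $R_{\max}^{\nn}(p)\le \|(2,1)\|_p$ and $R_{\max}^{\cn}(p)\ge \|(2,1)\|_p$. Together with $R_{\max}^{\cn}(p)\le R_{\max}^{\nn}(p)$, established above, they give the equality.

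\emph{Upper bound.} Let $R\ge\|(2,1)\|_p$ and $P\in\mathcal P$. For horizontally adjacent cells $u=(i,j)$, $v=(i+1,j)$ we have $X_v-X_u\in[0,2]$ and $|Y_v-Y_u|\le 1$. Since the $\mathcal L^p$-norm is monotone in the absolute values of the coordinates, $d(P_u,P_v)\le\|(2,1)\|_p\le R$. The vertical case is symmetric, using $\|(1,2)\|_p=\|(2,1)\|_p$. Because edges are defined with $\le R$, every nearest-neighbour edge is present. Hence $\Gamma_{p,R}^{\nn}(P)$ contains the whole square lattice, and $\C_0$ is infinite for every $P$.

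\emph{Lower bound.} Fix $R<\|(2,1)\|_p$. I will exhibit one $P$ for which $\C_0(\Gamma^{\cn}_{p,R}(P))$ is finite. The basic device is a ``corner-pushing'' barrier.

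\textbf{Half-plane barrier.} Put $P_u=u$ (lower-left corner) for cells with $i+j\le 0$, and $P_v=v+(1,1)$ (upper-right corner) for cells with $i+j\ge1$. Take $u$ on the first side and $v$ on the second, and write $v-u=(a,b)$ with $a,b\in\ZZ$ and $a+b\ge1$. Then $P_v-P_u=(a+1,b+1)$ is an integer vector with $|a+1|+|b+1|\ge 3$. The integer vectors with $\ell^1$-norm at least $3$ have $\mathcal L^p$-norm at least $\min(\|(2,1)\|_p,\|(3,0)\|_p)$. This minimum equals $\|(2,1)\|_p$, since $\|(2,1)\|_p\le\|(2,1)\|_1=3$. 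So no edge crosses the anti-diagonal line $i+j=\tfrac12$.

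\textbf{Closing the barrier.} I close this barrier into a finite region, taking $A=\{|i|+|j|\le n\}$ for some $n\ge 1$. Each cell is pushed to the corner of its own cell determined by the signs of its coordinates. Cells of $A$ are pushed toward the origin. A cell of $A$ with $i\ge0,\ j\ge0$ goes to its lower-left corner, one with $i\le -1,\ j\ge 0$ goes to its lower-right corner, and so on; cells on the axes are assigned to one of the adjacent quadrants by a fixed convention. Cells outside $A$ are pushed to the opposite corner, away from the origin.

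Within each quadrant, the half-plane computation above (rotated or reflected) shows that a cell of $A$ and a cell outside $A$ are at distance at least $\|(2,1)\|_p>R$. For pairs lying in different quadrants, the corners are pushed in opposite directions along the coordinate that changes sign. This should give an even larger displacement: an integer difference vector whose $\ell^1$-norm is still at least $3$.

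If this holds, every edge from $A$ stays in $A$. Then $\C_0\subset A$ is finite, so $R_{\max}^{\cn}(p)\ge R$; letting $R\uparrow\|(2,1)\|_p$ gives the lower bound.

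\emph{Main obstacle.} The main difficulty is this case check near the four tips of the diamond and near the axes, where the corner assignment switches. There one must confirm that every pair (cell in $A$, cell outside $A$) has an integer difference vector of $\ell^1$-norm at least $3$. If the axis cells cause trouble, I would first try enlarging $A$ to a square $\{|i|,|j|\le n\}$ with the same sign-based pushing. The barrier is then made of straight horizontal and vertical sides, plus four corner blocks to check by hand.
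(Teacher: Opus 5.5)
\textbf{Verdict: there is a genuine gap in the lower bound.} Your upper bound is correct and is the paper's argument. Your half-plane computation is also correct. The failure is in the case check you flagged as the main obstacle: with $A=\{|i|+|j|\le n\}$ and sign-based corner pushing, the barrier does not close at the four tips of the diamond.

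Take the tip cell $(n,0)\in A$. Its neighbours $(n,1)$ and $(n,-1)$ lie outside $A$, so they are pushed to their far corners $(n+1,2)$ and $(n+1,-1)$.
\begin{itemize}
\item If your convention puts $(n,0)$ in the upper quadrant, $P_{n,0}=(n,0)$. This is at distance $\|(1,1)\|_p<\|(2,1)\|_p$ from $(n+1,-1)$.
\item If it puts $(n,0)$ in the lower quadrant, $P_{n,0}=(n,1)$. This is at the same distance from $(n+1,2)$.
\end{itemize}
Given those two outside placements, no point of $[n,n+1]\times[0,1]$ is farther than $\|(1,\tfrac32)\|_p$ from both of them, so no convention rescues the tip. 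The same thing happens at the other three tips.

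The heuristic that breaks is your claim that cross-quadrant pairs are pushed in opposite directions. For a pair (cell of $A$, cell outside $A$) on the two sides of an axis, ``toward the origin'' on one side and ``away from the origin'' on the other side are the \emph{same} direction along the coordinate that changes sign. Your fallback square also fails for $p<\infty$. Along its right side, $(n,j)\in A$ sits at $(n,j)$ and $(n+1,j-1)\notin A$ sits at $(n+2,j)$. These are at distance $2<\|(2,1)\|_p$.

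\textbf{How to repair it.} The idea survives if you center the region at the lattice point $(0,0)$ rather than at the cell $(0,0)$.
\begin{itemize}
\item For a cell $(i,j)$, set $i'=i$ if $i\ge 0$ and $i'=-i-1$ if $i\le -1$, and define $j'$ the same way. Let $A=\{i'+j'\le n-1\}$.
\item Place each cell $u$ of $A$ at its corner nearest the origin. This gives $\|P_u\|_1=i'+j'\le n-1$.
\item Place each other cell $v=(k,l)$ at its farthest corner. This gives $\|P_v\|_1=k'+l'+2\ge n+2$.
\end{itemize}
By the triangle inequality, $\|P_v-P_u\|_1\ge 3$ for every such pair, whatever quadrants they lie in. Since $P_v-P_u$ is an integer vector, your half-plane computation then gives $\|P_v-P_u\|_p\ge\|(2,1)\|_p>R$, with no case analysis. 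For $n=1$ this is exactly the paper's configuration: the four cells $\{-1,0\}^2$ are all placed at the origin, and the surrounding cells are pushed to their far corners, so that $\C_0$ consists of exactly those four cells.
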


The proof is given in Section~\ref{sec:Rmax}.\par
\smallskip
For the possible connectivity radius, we also give the exact values. It is totally explicit for the \name model, but "slightly less" for the nearest neighbor variation.
\begin{theo}\label{thm:Rmin}
    \begin{enumerate}
        \item For $p\in[1,\infty)$, $R_{\min}^{\cn}(p) = \displaystyle \min \left\{\frac{1}{2},\frac{2}{1+2^{2-\frac{1}{p}}}\right\}$
        and $R_{\min}^{\cn}(\infty)=2/5$.
        \item For $p\in[1,\infty)$, we denote by $R(p)$ the unique solution on $[0,1]$ of the equation
            \begin{equation}\label{eqRmin4V}
                (2-R)^p + (3-3R)^p = (4R)^p.
            \end{equation}
            We have $R_{\min}^{\nn}(p) \displaystyle = \min\left\{\frac{1}{2},R(p)\right\}$ and $R_{\min}^{\nn}(\infty) = 3/7$.
    \end{enumerate}
\end{theo}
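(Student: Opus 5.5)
Each part of Theorem~\ref{thm:Rmin} asks for a matching lower and upper bound on $R_{\min}$. For the upper bounds we construct explicit periodic positionings $P$ with an infinite component. For the lower bounds we show that for smaller $R$ every positioning makes $\C_0$ finite. Since Proposition~\ref{prop:coupling} makes everything monotone in $R$, we only need to work at the threshold values.

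\emph{Upper bounds (constructions).} The value $1/2$ is reached in both models by an alternating vertical strip. Put $P_{0,2k}=(1,2k+1)$ and $P_{0,2k+1}=(1,2k+1.5)$, or, more symmetrically, pair consecutive cells so that two points lie on their common edge. Then let the pairs be linked at distance $1/2$ through a zigzag between columns $0$ and $1$, with points on the corner $(1,j)$ alternately. Concretely, I would take $P_{0,j}$ and $P_{1,j}$ both near the vertical line $x=1$, at heights spaced by $1/2$, which gives an infinite nearest-neighbor path with steps of $\ell^p$-length $1/2$ for every $p$.

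For the \cn{} value $2/(1+2^{2-1/p})$, I expect a staircase/diagonal pattern that uses diagonal edges. The points of a diagonal strip cluster around the lattice corners $(k,k)$. Each period of the diagonal uses one diagonal step of vector $(a,a)$, whose length is $2^{1/p}a$, and some axis steps. Equating the lengths and imposing that the displacement per period is $(2,2)$ gives the formula. I would solve for the optimal arrangement by writing the period constraint and equalizing all edge lengths to $R$.

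Similarly, the \nn{} value $R(p)$ should come from a periodic staircase with steps $(2-R,\,3-3R)$ balanced against a total of $4R$. Here I would guess a period of $4$ cells moving $(2,3)$ or similar, and tune the equation~\eqref{eqRmin4V} by setting every edge to length exactly $R$. The $p=\infty$ values $2/5$ and $3/7$ follow from the same constructions, either by taking limits or by direct computation.

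\emph{Lower bounds (the main obstacle).} Take $R$ below the threshold and suppose $\C_0$ is infinite. Then $\C_0$ contains an infinite self-avoiding lattice path, which must cross arbitrarily many columns or rows. I would use a geometric potential argument. An edge between cells $u,v$ with $\|u-v\|_\infty\ge2$ forces distance at least $1$, so only adjacent or diagonal cells can be linked. Each edge between horizontally adjacent cells forces the two points within $R$ of the common vertical line. A vertical edge does the same for the common horizontal line, and a diagonal edge forces both points near the common corner.

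I would then show that a long path must repeatedly "turn" at cell boundaries. The key lemma is that a cell whose point is within $R<1/2$ of one boundary line is far from the opposite one. So advancing by two cells in a direction costs a chain of edges whose total length, measured by the triangle inequality against the displacement, exceeds what $R$ allows. The critical case analysis is to enumerate the finite local patterns, such as cell sequences over a window of $4$ or $5$ steps, and show that each forces an inequality violated when $R$ is below the threshold; the optimal patterns are exactly the constructions above. This combinatorial enumeration, especially for \nn{} with the implicit $R(p)$ and for diagonal moves in \cn, is where I expect the main difficulty. I would organize it by the projection onto a linear functional (e.g.\ $x+y$ or $x$), bounding the average progress per edge by less than $0$ on long paths, which contradicts infiniteness.
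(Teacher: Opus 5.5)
\textbf{There is a genuine gap in your lower bound for $p\in[1,\infty)$, and the mechanism you propose cannot close it.} Ruling out local patterns in windows of bounded length cannot reach the thresholds $\min\{1/2,f(p)\}$, with $f(p)=2/(1+2^{2-1/p})$, or $\min\{1/2,R(p)\}$. For $R$ slightly below these values, arbitrarily long finite pieces of the extremal path are still realizable. The obstruction is only a loss $c(R)>0$ per block, with $c(R)\to 0$ as $R$ increases to the threshold, so a contradiction appears only after about $1/c(R)$ blocks. Any argument that forbids patterns of length at most $L$ therefore proves a bound strictly below the threshold. Finite enumeration is what the paper does for $p=\infty$. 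There the thresholds $2/5$ and $3/7$ have the form $a/b$, and patterns are forbidden by the inequality $\|u_{k+b}-u_k\|_\infty\le a$, valid for $R<a/b$, with $(a,b)=(2,5)$ and $(3,7)$.

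What you are missing is a bounded potential that loses a uniform amount per block. A linear functional of the lattice position does not give one: a negative drift is no contradiction, since the functional is unbounded. A fixed functional of the offset of $P_u$ inside its cell fails because the path turns.

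The paper's construction works as follows.
\begin{itemize}
\item It extracts a subsequence $u_{a_k}$ and reads a direction $\delta_k\in\{\la,\ra,\da,\ua\}$ off the local steps.
\item It sets $A_k\in[0,1]$ to be the distance from $P_{u_{a_k}}$ to the side of its cell opposite to $\delta_k$.
\item A combinatorial classification of the blocks keeps $\delta_k$ consistent from one block to the next. In the complete neighbor model, each block starts, up to symmetry, with steps in $\{\nwa,\ua\}\times\{\ra\}\times\{\nwa,\ua\}$. In the nearest neighbor model one first proves that $\Stairs$ is infinite and then groups staircases through $n_k,b_k$.
\item Straight blocks cost at least $1-2R$. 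Turning blocks are controlled by a one-variable optimization.
\end{itemize}
In the complete neighbor model the turning-block estimate is $A_{k+1}-A_k\le[(2R)^p-(1-x)^p]^{1/p}+R-1-x$ with $x=A_k\in[0,R]$. This is negative for $R<f(p)$ exactly because $\min_x\{(1+x-f)^p+(1-x)^p\}=2(1-f/2)^p=(2f)^p$, attained at $x=f/2$. In the nearest neighbor model, the analogous extremal computations at $x=(3R(p)-1)/2$ produce $(2-R)^p+(3-3R)^p=(4R)^p$. This is where the exact thresholds come from, and none of it is in your sketch.

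\textbf{The upper bounds for $f(p)$ and $R(p)$ are not established either.} You describe how you would look for a configuration rather than giving one. Your heuristic as stated also gives the wrong value: one diagonal vector $(a,a)$ per period of displacement $(2,2)$, plus $m$ horizontal and $m$ vertical jumps of length $R$, yields $R=2/(m+2^{-1/p})$. In the paper's complete neighbor staircase, with steps $(\sea,\ua,\ra,\nwa,\ra,\ua)$, it is the other way round. The two lattice-diagonal steps are realized by axis-parallel jumps $(f,0)$ and $(0,f)$ across a cell corner. The four lattice-axis steps are realized by geometric diagonal jumps $(d,d)$. Then $f+4d=2$ and $2^{1/p}d=f$ give $f(p)$. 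In the nearest neighbor construction the period has $16$ cells, and the binding edge has vector $\frac{1}{4}(2-R,-(3-3R))$; this is where the equation defining $R(p)$ comes from.

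\textbf{Your value-$\frac{1}{2}$ construction needs one correction.} Your first attempt is not a path, since $P_{0,2k+1}=(1,2k+\frac{3}{2})$ and $P_{0,2k+2}=(1,2k+3)$ are at distance $\frac{3}{2}$. Your second version does work when made precise. Follow the path $(0,0),(0,1),(1,1),(1,2),(0,2),\dots$, with steps $\ua,\ra,\ua,\la$ repeated. Place its points on the line $x=1$ at heights $\frac{1}{2},1,\frac{3}{2},2,\dots$ in order. This is the paper's construction rotated.
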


An important part of this article is devoted to the proof of this result, which is given in Section~\ref{sec:Rmin}.\par

\smallskip

Finally, we improve the bounds for the critical radius.

\begin{theo}\label{thm:Rc}
    For $p\in[1,\infty]$, we have 
    \begin{enumerate}
        \item $R_{\min}(p)\leq R_{c}^{\cn}(p)\leq\displaystyle \left\|\left(1,\frac{3}{2}\right)\right\|_p $,
        \item $\sqrt{2-\sqrt{2}}\leq R_{c}^{\nn}(p)\leq\displaystyle \left\|\left(1,\frac{3}{2}\right)\right\|_p $.
    \end{enumerate}
\end{theo}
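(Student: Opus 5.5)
The plan is to treat the three inequalities separately: the upper bound is common to both models, and the two lower bounds are of different natures.

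\textbf{Lower bound, complete neighbor model.} This is immediate from the chain $R_{\min}^{\cn}(p)\le R_c^{\cn}(p)$ noted in the introduction, together with the values of Theorem~\ref{thm:Rmin}.

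\textbf{Lower bound, nearest neighbor model.} By the monotonicity proposition in $p$, $R_c^{\nn}(p)\ge R_c^{\nn}(\infty)$, so it suffices to treat $p=\infty$. I would show that for $R<\sqrt{2-\sqrt2}$ the cluster of the origin is a.s.\ finite, using a first-moment (Peierls-type) bound on self-avoiding open paths of length $n$ starting at $0$.

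For a horizontal edge $(u,u+e_1)$ with $p=\infty$, openness forces
$$1+U_{u+e_1}-U_u\le R,$$
where $U$ denotes the fractional $x$-coordinate. This event has probability $R^2/2$. Note that $R^2/2=1-1/\sqrt2$ exactly when $R=\sqrt{2-\sqrt2}$, which suggests the threshold comes from this one-coordinate constraint alone.

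The key step is to reveal the points along a path one at a time. Given the past, the new point must satisfy a constraint on a single coordinate that is independent of the previously used coordinate in the other direction. I would then bound the probability that the whole path is open by a product of conditional factors. This requires tracking the conditional law of the relevant coordinate of the current endpoint, which after a step of the same direction is biased toward the far side. I expect this to be the delicate point.

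One way to handle it is a transfer-operator argument. Write the path probability as an iterated integral, with kernels of the form $\mathbb 1\{1+u'-u\le R\}$ for the direction along the step and the identity in the orthogonal coordinate. Then bound the spectral radius of the resulting operator, summed over the three allowed directions, by a quantity which is $<1$ precisely when $R^2/2<1-1/\sqrt2$. If the spectral estimate is too crude, I would restrict the count to paths whose consecutive steps alternate in a way that decouples coordinates, and fall back on the connective-constant bound.

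\textbf{Upper bound, both models.} Since $E^{\nn}\subset E^{\cn}$, it suffices to prove $R_c^{\nn}(p)\le\|(1,\tfrac32)\|_p$. Fix $R>\|(1,\tfrac32)\|_p$. To each site $u$ attach two independent fair bits: $a_u$ records whether $X_u$ is in the left or right half of its cell, and $b_u$ records the lower or upper half for $Y_u$.

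For a horizontal edge $(u,u+e_1)$ we always have $|\Delta y|\le 1$. If at least one endpoint lies in the half facing the other, then $|\Delta x|\le 3/2$. So the edge is open unless $a_u=\text{left}$ and $a_{u+e_1}=\text{right}$, and similarly for vertical edges. Hence $\Gamma^{\nn}_{p,R}$ contains a $1$-dependent bond model in which each edge is closed with probability $1/4$ and closure is determined by the bits of its endpoints.

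The remaining task is to show this model percolates. Since the marginal $3/4$ alone is too weak for a direct Liggett--Schonmann--Stacey comparison, I would use a block renormalization. Take $k\times k$ blocks, and call a block good if every site on its boundary annulus is connected within the block to a left-right and a top-bottom crossing. Then show that the probability a block is good tends to $1$ as $k\to\infty$. This can be done, for instance, via the structural observation that from a site with $a_u=\text{right}$ (resp.\ $b_u=\text{up}$) the step right (resp.\ up) is always open, which yields oriented crossings with high probability. The good-block process is finitely dependent, so Liggett--Schonmann--Stacey dominates it by a supercritical Bernoulli site percolation, which gives $\theta^{\nn}_p(R)>0$.

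I expect the main obstacle to be the lower bound for the nearest neighbor model, namely obtaining the exact constant $\sqrt{2-\sqrt2}$ from the path-counting estimate.
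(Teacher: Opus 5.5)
The complete-neighbor lower bound and the reduction of the upper bound to $\Gamma^{\nn}$ are fine. Both substantive steps, however, have problems.

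\textbf{Upper bound.} Your reduction to the bit model is correct: for $R>\|(1,\tfrac32)\|_p$, a horizontal edge $(u,u+e_1)$ of $\Gamma^{\nn}_{p,R}$ can be absent only if $a_u=$ left and $a_{u+e_1}=$ right. But you do not establish that this dependent model percolates, and the block argument you sketch fails.
\begin{itemize}
\item The mechanism you invoke ($a_u=$ right opens $(u,u+e_1)$, $b_u=$ up opens $(u,u+e_2)$) uses each bit for exactly one edge. It is therefore exactly independent bond percolation with parameter $1/2$. That model is critical, so square-crossing probabilities stay bounded away from $1$. Its oriented version is subcritical ($1/2<\vec p_c\approx 0.645$). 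Either way it gives no high-probability crossings.
\item Your good-block event has probability tending to $0$, not $1$. A fixed pattern of $16$ bits makes a $2\times 2$ square a finite cluster, and a boundary annulus of order $k$ sites contains such a trap with probability tending to $1$.
\end{itemize}
The missing idea is to use the strict inequality. Write $R=\|(1,1+\eps)\|_p$ with $\eps>1/2$. Keep only the edges $((i,j),(i+1,j))$ with $X_{i,j}>i+1-\eps$ and $((i,j),(i,j+1))$ with $Y_{i,j}>j+1-\eps$. Each coordinate then controls a single edge, so this is independent Bernoulli$(\eps)$ bond percolation, supercritical and contained in $\Gamma^{\nn}_{p,R}(P)$. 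This is the paper's proof. Alternatively, your bit model can be saved by a Peierls argument. Two collinear adjacent edges can never both be closed, so along a dual self-avoiding path of length $n$ the closure events are independent or incompatible. This gives probability at most $4^{-n}$ against at most $4\cdot 3^{n-1}$ paths.

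\textbf{Lower bound for $\nn$.} Your first-moment route differs from the paper's and can be completed, but as written the decisive estimate is missing and the announced threshold is a guess.

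In the paper, $\sqrt{2-\sqrt2}$ does not come from a one-edge constraint. The paper passes to the decorrelated model (your one-coordinate relaxation) and keeps the dual edges on every other row and column. These are independent and open with probability $1-R^2/2$. It then pairs them and compares with Bernoulli bond percolation of parameter $(1-R^2/2)^2>1/2$. The square root is an artifact of this pairing, so there is no reason for your transfer operator to be critical ``precisely'' there.

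To turn your plan into a proof, replace the conditional-law tracking by an exact factorisation.
\begin{itemize}
\item In a self-avoiding path, horizontal and vertical edges involve independent coordinates. Two path edges share a variable only when they are consecutive in a maximal straight run.
\item Hence $\mathbb P(\gamma\text{ open})=\prod_i\phi(m_i)$ over the runs, with $\phi(m)=(1-m(1-R))_+^{m+1}/(m+1)!$.
\item Count run decompositions: $4$ choices for the first run and $2$ at each turn. This shows that the expected number of open self-avoiding paths of length $n$ from the origin decays exponentially as soon as $2\sum_{m\ge1}\phi(m)<1$.
\item At $R=\sqrt{2-\sqrt2}$ this quantity is about $0.64$, and it increases with $R$, so you obtain the stated bound.
\end{itemize}
In fact the condition holds up to $R\approx 0.9$, which improves on the paper's constant. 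Without this computation, your lower bound remains a plan rather than a proof.
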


The proof is given in Section~\ref{sec:Rc}.

The results of those three theorems are summarized on Figure~\ref{fig:ResultsCN} for the \name model and on Figure~\ref{fig:ResultsNN} for the nearest neighbor one.

\begin{figure}
    \centering
    \begin{tikzpicture}
	\begin{scope}[x=3cm,y=2cm,xshift=-3cm]
	\fill[red!10] [smooth, samples = 200, domain={1}:{4}]
    	(1,3.5) -- plot (\x,{(2^(\x) + 1)^(1/(\x))}) -- (4,3.5) -|  cycle;
	\fill[green!10] [smooth, samples = 200, domain={1}:{4}]
    	(1,1) -- plot (\x,{(2^(\x) + 1)^(1/(\x))}) -- (4,1) -|  cycle;
	\fill[white] [smooth, samples = 200, domain={1}:{4}]
    	(1,1) -- plot (\x,{((3/2)^(\x) + 1)^(1/(\x))}) -- (4,1) -|  cycle;
	\fill[blue!10] [smooth, samples = 200, domain={2.4904}:{4}]
    	(1,0) -- (1,1/2) -- (2.4094,1/2) -- plot (\x,{2/(1+2^(2-1/\x))}) -- (4,0) -|  cycle;
	\node [draw=none,fill=none] at (2.5,3) {\color{red}$|\C_0(\Gamma_{d,R}^{\cn}(P))| = \infty$};
	\node [draw=none,fill=none] at (2.5,1.95) {\color{green}$\mathbb{P}(|\C_0(\Gamma_{d,R}^{\cn}(P))| = \infty)>0$};
	\node [draw=none,fill=none] at (2.5,.25) {\color{blue}$|\C_0(\Gamma_{d,R}^{\cn}(P))| < \infty$};
	\end{scope}
	\begin{scope}[y=2cm]
	\draw [->] (0,0) -- (0,3.5);
	\node [anchor=south east,fill=white,draw=none] at (-.1,3.5) {$R$};
	\node [anchor= east,fill=none,draw=none] at (-.1,0) {0};
	\node [anchor=east,fill=none,draw=none] at (0,1/2) {$\frac{1}{2}$};
	\node [anchor=east,fill=white,draw=none] at (-.1,1) {$1$};
	\node [anchor=east,fill=white,draw=none] at (-.1,2) {$2$};
	\node [anchor=east,fill=white,draw=none] at (-.1,3) {$3$};
	\draw (-0.1,0) -- (0,0);
	\draw (-0.1,1/2) -- (0,1/2);
	\draw (-0.1,1) -- (0,1);
	\draw (-0.1,2) -- (0,2);
	\draw (-0.1,3) -- (0,3);
	\end{scope}
	
	\begin{scope}[x=3cm,xshift=-3cm]
		\draw [->] (1,0) -- (4,0);
		 \node [anchor=north west,fill=white,draw=none] at (4,0) {$p$};
		 \node [anchor=north,fill=white,draw=none] at (1,-.1) {$1$};
		 \node [anchor=north,fill=white,draw=none] at (2,-.1) {$2$};
		 \node [anchor=north,fill=white,draw=none] at (3,-.1) {$3$};
		 \node [anchor=north,fill=white,draw=none] at (2.4094,-.1) {\tiny$\frac{\ln(2)}{\ln(\frac{4}{3})}$};
		 \draw (1,-0.1) -- (1,0);
		 \draw (2,-0.1) -- (2,0);
		 \draw (3,-0.1) -- (3,0);
		 \draw (2.4094,-0.1) -- (2.4094,0);
	\end{scope}
	
	\begin{scope}[x=3cm, y=2cm, xshift=-3cm]
	\draw [color=blue] (1,1/2) -- (2.4094,1/2);
	\node [fill=blue,inner sep=2pt,shape=circle] at (2.4094,1/2) {};
	\draw[color=blue,domain=2.4094:4] plot (\x,{2/(1+2^(2-1/\x))}) node[right,fill=white,draw=none] {$R_{\min}^{\cn}\to\frac{2}{5}$};
	
	 \draw[color=red,domain=1:4] plot (\x,{(2^(\x) + 1)^(1/(\x))}) node[right,fill=white,draw=none] {$R_{\max}^{\cn}\to2$};
	 
	\draw[color=gray,dashed,domain=1:4] plot (\x,{((3/2)^(\x) + 1)^(1/(\x))}) node[right,fill=none,draw=none] {{\tiny Upper Bound} $R_c^{\cn}$};
	 
	\node [inner sep=2pt,fill=none,draw=none] at (1,1.305) {\tiny x};
	\node [inner sep=2pt,fill=none,draw=none] at (1.5,1.12) {\tiny x};
	\node [inner sep=2pt,fill=none,draw=none] at (2,1.04) {\tiny x};
	\node [inner sep=2pt,fill=none,draw=none] at (2.5,1.005) {\tiny x};
	\node [inner sep=2pt,fill=none,draw=none] at (3,0.985) {\tiny x};
	\node [inner sep=2pt,fill=none,draw=none] at (3.5,0.98) {\tiny x};
	\node [inner sep=2pt,fill=none,draw=none] at (4,0.965) {\tiny x};
	\node [inner sep=2pt,fill=none,draw=none] at (4.37,1.) {{\tiny Estimation }$R_c^{\cn}$};
	\end{scope}
    \end{tikzpicture}
    \caption{Graph summarizing the result obtained in the \name model. Estimation of the critical radius for different values of $p$ are given in Section~\ref{sec:Estimates}.}
    \label{fig:ResultsCN}
\end{figure}

\begin{figure}
    \centering
    \begin{tikzpicture}[domain=0:10]
	\begin{scope}[x=3cm,y=2cm,xshift=-3cm]
	\fill[red!10] [smooth, samples = 200, domain={1}:{4}]
    	(1,3.5) -- plot (\x,{(2^(\x) + 1)^(1/(\x))}) -- (4,3.5) -|  cycle;
	\fill[green!10] [smooth, samples = 200, domain={1}:{4}]
    	(1,1) -- plot (\x,{(2^(\x) + 1)^(1/(\x))}) -- (4,1) -|  cycle;
	\fill[white] [smooth, samples = 200, domain={1}:{4}]
    	(1,1) -- plot (\x,{((3/2)^(\x) + 1)^(1/(\x))}) -- (4,1) -|  cycle;
	\fill[aliceblue] (1,0) -- (1,{(2-2^(1/2))^(1/2)}) -- (4,{(2-2^(1/2))^(1/2)}) -- (4,0) -|  cycle;
    \fill[blue!10] [smooth]
    	(1,0) -- (1,1/2) -- (2.4094,1/2) -- (2.5,0.4968) --	(2.6,0.4936) -- (2.7,0.4907) --	(2.8,0.488) -- (2.9,0.4854)	-- (3,0.4831) -- (3.1,0.4809) -- (3.2,0.4788) -- (3.3,0.4769) -- (3.4,0.4751) -- (3.5,0.4734) -- (3.6,0.4718) -- (3.7,0.4703) -- (3.8,0.4689) -- (3.9,0.4675) -- (4,0.4663) -- (4,0) -|  cycle;
	\node [draw=none,fill=none] at (2.5,3) {\color{red}$|\C_0(\Gamma_{d,R}^{\nn}(P))| = \infty$};
	\node [draw=none,fill=none] at (2.5,1.95) {\color{green}$\mathbb{P}(|\C_0(\Gamma_{d,R}^{\nn}(P))| = \infty)>0$};
	\node [draw=none,fill=none] at (2.5,0.65) {\color{airforceblue}$\mathbb{P}(|\C_0(\Gamma_{d,R}^{\nn}(P))| = \infty)=0$};
	\node [draw=none,fill=none] at (2.5,.25) {\color{blue}$|\C_0(\Gamma_{d,R}^{\nn}(P))| < \infty$};
	\end{scope}

	\begin{scope}[y=2cm]
	\draw [->] (0,0) -- (0,3.5);
	\node [anchor=south east,fill=white,draw=none] at (-.1,3.5) {$R$};
	\node [anchor= east,fill=none,draw=none] at (-.1,0) {0};
	\node [anchor=east,fill=none,draw=none] at (0,1/2) {$\frac{1}{2}$};
	\node [anchor=east,fill=white,draw=none] at (-.1,1) {$1$};
	\node [anchor=east,fill=white,draw=none] at (-.1,2) {$2$};
	\node [anchor=east,fill=white,draw=none] at (-.1,3) {$3$};
	\draw (-0.1,0) -- (0,0);
	\draw (-0.1,1/2) -- (0,1/2);
	\draw (-0.1,1) -- (0,1);
	\draw (-0.1,2) -- (0,2);
	\draw (-0.1,3) -- (0,3);
	\end{scope}
	
	\begin{scope}[x=3cm,xshift=-3cm]
		\draw [->] (1,0) -- (4,0);
		 \node [anchor=north west,fill=white,draw=none] at (4,0) {$p$};
		 \node [anchor=north,fill=white,draw=none] at (1,-.1) {$1$};
		 \node [anchor=north,fill=white,draw=none] at (2,-.1) {$2$};
		 \node [anchor=north,fill=white,draw=none] at (3,-.1) {$3$};
		 \node [anchor=north,fill=white,draw=none] at (2.4094,-.1) {\tiny$\frac{\ln(2)}{\ln(\frac{4}{3})}$};
		 \draw (1,-0.1) -- (1,0);
		 \draw (1.35,-0.1) -- (1.35,0);
		 \draw (2,-0.1) -- (2,0);
		 \draw (3,-0.1) -- (3,0);
		 \draw (2.4094,-0.1) -- (2.4094,0);
	\end{scope}
	
	\begin{scope}[x=3cm, y=2cm, xshift=-3cm]
	\draw [color=blue] (1,1/2) -- (2.4094,1/2);
	\node [fill=blue,inner sep=2pt,shape=circle] at (2.4094,1/2) {};
	\draw[color=blue] (1,0) -- (1,1/2) -- (2.4094,1/2) -- (2.5,0.4968) --	(2.6,0.4936) -- (2.7,0.4907) --	(2.8,0.488) -- (2.9,0.4854)	-- (3,0.4831) -- (3.1,0.4809) -- (3.2,0.4788) -- (3.3,0.4769) -- (3.4,0.4751) -- (3.5,0.4734) -- (3.6,0.4718) -- (3.7,0.4703) -- (3.8,0.4689) -- (3.9,0.4675) -- (4,0.4663)node[right,fill=white,draw=none] {$R_{\min}^{\nn}(p)\to\frac{3}{7}$};
	
	\draw[color=red,domain=1:4] plot (\x,{(2^(\x) + 1)^(1/(\x))}) node[right,fill=white,draw=none] {$R_{\max}^{\nn}(p)\to2$};
	\draw[color=gray,dashed,domain=1:4] plot (\x,{((3/2)^(\x) + 1)^(1/(\x))}) node[right,fill=none,draw=none] {{\tiny Upper Bound} $R_c^{\nn}(p)$};
	\draw[color=gray,dashed] (1,{(2-2^(1/2))^(1/2)}) -- (4,{(2-2^(1/2))^(1/2)}) node[right,fill=none,draw=none] {{\tiny Lower Bound} $R_c^{\nn}(p)$};
	
	\node [inner sep=2pt,fill=none,draw=none] at (1,1.35) {\tiny x};
	\node [inner sep=2pt,fill=none,draw=none] at (1.5,1.12) {\tiny x};
	\node [inner sep=2pt,fill=none,draw=none] at (2,1.095) {\tiny x};
	\node [inner sep=2pt,fill=none,draw=none] at (2.5,1.05) {\tiny x};
	\node [inner sep=2pt,fill=none,draw=none] at (3,1.04) {\tiny x};
	\node [inner sep=2pt,fill=none,draw=none] at (3.5,1.03) {\tiny x};
	\node [inner sep=2pt,fill=none,draw=none] at (4,1.025) {\tiny x};
	\node [inner sep=2pt,fill=none,draw=none] at (4.45,1.025) {{\tiny Estimation }$R_c^{\nn}(p)$};
	\end{scope}
    \end{tikzpicture}
    \caption{Graph summarizing the result obtained in the nearest neighbor model. Estimation of the critical radius for different values of $p$ are given in Section~\ref{sec:Estimates}.}
    \label{fig:ResultsNN}
\end{figure}

\paragraph{Outline of the article.}

Section~\ref{sec:Rmax} is devoted to the proof of Theorem~\ref{thm:Rmax}. Precisely, we show that all points are connected when $R\geq\|(2,1)\|_p$, while if $R<\|(2,1)\|_p$, we can construct an event of positive probability for which the origin belongs to a finite connected component. In Section~\ref{sec:Rmin}, we prove Theorem~\ref{thm:Rmin}, using self-avoiding paths. In order to bound $R_{\min}^{\cn}$ and $R_{\min}^{\nn}$ by above, it is sufficient to exhibit a single configuration for which there is an infinite path. For the lower bound, we provide firstly a proof quite simple for the case $p=\infty$, then a second more general proof for any $\mathcal{L}^p-$norms. The strategy is to assume that there exists a configuration with an infinite self-avoiding path, and to show that a certain quantity decreases strictly along this path,  which then leads to a contradiction. Section~\ref{sec:Rc} is devoted to the proof of the boundaries given in Theorem~\ref{thm:Rc}. We define a variant of our nearest-neighbor model, and compare it with the Bernoulli percolation, with the help of a duality argument.

\section{Proof of Theorem~\ref{thm:Rmax} ($R_{\max}$)}\label{sec:Rmax}
Let $p\in[1,\infty]$ be fixed. The following proof is treated for the case of the \name model. It can be adapted for the nearest neighbor model without major changes.

\paragraph{Upper bound.}
Let $R\geq\|(2,1)\|_p$. For any $P_{0,0} \in [0,1]^2$ and for any $P_{1,0} \in [1,2]\times[0,1]$, $\|P_{1,0}-P_{0,0}\|_p\leq \|(2,1)\|_p$. It follows that the points $(0,0)$ and $(1,0)$ are neighbors in $\Gamma_{d,R}^{\cn}(P)$. For the same reason, the points $(-1,0)$, $(0,1)$ and $(0,-1)$ are also neighbors of $(0,0)$. By induction, all the points are connected to $(0,0)$. 

\paragraph{Lower Bound.}
Let $\eps>0$, and consider a radius $R=\|(2,1)\|_p-\eps$.
We show that there exists a collection $P$ of points such that $|\C_0(\Gamma_{p,R}^{\cn}(P))| < \infty$.

For that we set for any $(i,j)\in \{-1,0\}^2$, $P_{i,j} =(0,0),$ and we define
$P_{0,1} = (1,2)$, $P_{0,2} = (1,3)$, $P_{1,0} = (2,1)$, $P_{1,1} = (2,2)$, $P_{2,0} = (3,1)$.

In the other three quadrants, we similarly define three sets of five points by rotation, see Figure~\ref{fig:Rmax}.

The set $\C_0(\Gamma_{d,R}^{\cn}(P))$ is then reduced to the four points  $(-1,-1)$, $(-1,0)$, $(0,-1)$ and $(0,0)$ which concludes the proof.

\begin{figure}[ht!]
    \centering
    \scalebox{1.5}{
    \begin{tikzpicture}[scale = .7,>=Latex]
        \filldraw[fill=green] (-2,-2) rectangle (2,2);
        \filldraw[fill=green] (-3,-1) rectangle (3,1);
        \filldraw[fill=green] (-1,-3) rectangle (1,3);
        \filldraw[fill=blue!10] (0,0) circle [radius=2.05];
        \filldraw[fill=red] (0,0)  circle [radius=0.2];
        \draw[step=1cm,color=gray,ultra thin] (-3.3,-3.3) grid (3.3,3.3);
        \node[inner sep=0pt,minimum size=3.5pt,fill=black,shape=circle] at (0,0) {};
        \foreach \i in{0,...,3}{
            \begin{scope}[rotate=\i*90]
                \node[inner sep=0pt,minimum size=3.5pt,fill=black,shape=circle] at (2,1) {};
                \draw[->] (2,1) -- (1.6,0.6);
                \node[inner sep=0pt,minimum size=3.5pt,fill=black,shape=circle] at (1,2) {};
                \draw[->] (1,2) -- (.6,1.6);
                \node[inner sep=0pt,minimum size=3.5pt,fill=black,shape=circle] at (2,2) {};
                \draw[->] (2,2) -- (1.6,1.6);
                \node[inner sep=0pt,minimum size=3.5pt,fill=black,shape=circle] at (1,3) {};
                \draw[->] (1,3) -- (.6,2.6);
                \node[inner sep=0pt,minimum size=3.5pt,fill=black,shape=circle] at (3,1) {};
                \draw[->] (3,1) -- (2.7,0.7);
                \draw[->] (0,0) -- (.4,.4);
            \end{scope}
        }
    
    \node [inner sep=0pt,minimum size=3.5pt,fill=blue,shape=circle] at  (-2.47, -2.85) {};
    \node [inner sep=0pt,minimum size=3.5pt,fill=blue,shape=circle] at  (-2.47, -1.83) {};
    \node [inner sep=0pt,minimum size=3.5pt,fill=blue,shape=circle] at  (-1.13, -2.86) {};
    \node [inner sep=0pt,minimum size=3.5pt,fill=blue,shape=circle] at  (-2.58, 1.27) {};
    \node [inner sep=0pt,minimum size=3.5pt,fill=blue,shape=circle] at  (-2.72, 2.38) {};
    \node [inner sep=0pt,minimum size=3.5pt,fill=blue,shape=circle] at  (-1.18, 2.76) {};
    \node [inner sep=0pt,minimum size=3.5pt,fill=blue,shape=circle] at  (1.31, 2.46) {};
    \node [inner sep=0pt,minimum size=3.5pt,fill=blue,shape=circle] at  (2.34, 1.72) {};
    \node [inner sep=0pt,minimum size=3.5pt,fill=blue,shape=circle] at  (2.41, 2.53) {};
    \node [inner sep=0pt,minimum size=3.5pt,fill=blue,shape=circle] at  (2.98, -1.73) {};
    \node [inner sep=0pt,minimum size=3.5pt,fill=blue,shape=circle] at  (2.09, -2.89) {};
    \node [inner sep=0pt,minimum size=3.5pt,fill=blue,shape=circle] at  (1.85, -2.51) {};
    
        \draw [->,>=Stealth,thick] (-3.5,0) -- (3.5,0);
        \draw [->,>=Stealth,thick] (0,-3.5) -- (0,3.5);
        \node[draw=none,fill=none] at (-.3,3.5) {\tiny $y$};
        \node[draw=none,fill=none] at (3.5,-.3) {\tiny $x$};
    \end{tikzpicture}
    }
    \caption{Configuration where $|\C_0(\Gamma_{p,R}^{\cn}(P))|<\infty$ for $p=2$. The arrows indicate to which cell the points belong. The red area contains the finite connected component $\C_0(\Gamma_{p,R}^{\cn}(P))$. The blue area combined with the red one correspond to the disk  of radius $\|(2,1)\|_p-\eps$. The points of the cells that intersect this disk are placed in the green area to avoid any connection with the points inside the red area.}
    \label{fig:Rmax}
\end{figure}

\section{Proof of Theorem~\ref{thm:Rmin} ($R_{\min}$)}\label{sec:Rmin}

In this section, we determine the values of $R_{\min}^{\cn}(p)$ and $R_{\min}^{\nn}(p)$ for all $p\in[1,\infty]$. To do this, we use the fact that the connected component of the origin is infinite if and only if it contains an infinite self-avoiding path starting from the origin, where a \emph{self-avoiding path} $(u_k)_{0\leq k < n}$, with $n\in\NN\cup\{\infty\}$ (where, to be precise, $\NN = \{0,1,\dots\}$ denotes the set of non-negative integers), is a path that does not visit a vertex more than once, i.e.\ for any $k\neq l$, $u_k \neq u_l$. For $k\in\NN$, we denote by $x_k$ (resp. $y_k$) the horizontal (resp.\ vertical) coordinate of $u_k$.

\subsection{Upper bounds}\label{sec:UBR_min}
Let $p\in[1,\infty]$.
\paragraph{Proof of \boldmath $R_{\min}^{\cn}(p)$ and $R^{\nn}_{\min}(p) \leq \displaystyle\frac{1}{2}$.}

We set, for $k\in\NN$,
$$\begin{array}{ll}
P_{2k,0}=(2k+\frac{1}{2},0),  & P_{2k+1,0}=(2k+1,0),\\
P_{2k+1,-1}=(2k+\frac{3}{2},0), & P_{2k+2,-1}=(2k+2,0),
\end{array}$$
see Figure~\ref{fig:SAP1/2}. The sequence $u = (u_k)_{k \geq 0}$ defined, for any $k\in\NN$, by
$$\begin{array}{ll}
u_{4k}=(2k,0),  & u_{4k+1}=(2k+1,0),\\
u_{4k+2}=(2k+1,-1), & u_{4k+3}=(2k+2,-1)
\end{array}$$
is then an infinite self-avoiding path of $\Gamma_{p,R}^{\cn}(P)$ and of $\Gamma_{p,R}^{\nn}(P)$.

\begin{figure}[ht!]
	\centering
	\begin{tikzpicture}[>=Latex]
		\draw[step=1cm,color=gray,ultra thin] (.2,-.4) grid (4.4,.4);
		\foreach \i in {0}{
			\begin{scope}[xshift=\i*2cm]
				\node [fill=red, inner sep=2pt,shape=circle] at (0.5,0) {};
				\node [fill=red, inner sep=2pt,shape=circle] at (1,0) {};
				\node [fill=red, inner sep=2pt,shape=circle] at (1.5,0) {};
				\node [fill=red, inner sep=2pt,shape=circle] at (2,0) {};
				\draw [red] (0.5,0) -- (2.5,0);
				\draw [thick, ->, color=red] (0.5,0) -- (0.5,0.4);
				\draw [thick, ->, color=red] (1,0) -- (1.4,0.4);
				\draw [thick, ->, color=red] (1.5,0) -- (1.5,-0.4);
				\draw [thick, ->, color=red] (2,0) -- (2.4,-0.4);
			\end{scope}
		}
		\foreach \i in {1}{
			\begin{scope}[xshift=\i*2cm]
				\node [fill=black, inner sep=2pt,shape=circle] at (0.5,0) {};
				\node [fill=black, inner sep=2pt,shape=circle] at (1,0) {};
				\node [fill=black, inner sep=2pt,shape=circle] at (1.5,0) {};
				\node [fill=black, inner sep=2pt,shape=circle] at (2,0) {};
				\draw (0.5,0) -- (2,0);
				\draw [thick, ->, color=black] (0.5,0) -- (0.5,0.4);
				\draw [thick, ->, color=black] (1,0) -- (1.4,0.4);
				\draw [thick, ->, color=black] (1.5,0) -- (1.5,-0.4);
				\draw [thick, ->, color=black] (2,0) -- (2.4,-0.4);
			\end{scope}
		}
	\end{tikzpicture}
	\caption{Construction of an infinite self-avoiding path of $\Gamma_{p,R}^{\cn}(P)$ and of $\Gamma_{p,R}^{\nn}(P)$ for $R\geq1/2$ and for any $p\in[1,\infty]$. The pattern that is repeated periodically is in red and the arrows indicate to which cell the points belong.}
	\label{fig:SAP1/2}
\end{figure}

\paragraph{Proof of \boldmath $R_{\min}^{\cn}(p) \leq \displaystyle \frac{2}{1+2^{2-\frac{1}{p}}}$.} 

We denote 
$$f(p)=\left\{
\begin{array}{ll}
     \displaystyle \frac{2}{1+2^{2-\frac{1}{p}}} & \text{if } p\in[1,\infty), \\[0.5cm]
     \displaystyle \frac{2}{5} & \text{if } p=\infty.
\end{array}
\right.
$$

Consider the sequence $u = (u_k)_{k \geq 0}$ defined, for any $k\geq 0$, by
$$\begin{array}{lll}
u_{6k}=(2k,2k),  & u_{6k+1}=(2k+1,2k-1), & u_{6k+2}=(2k+1,2k),\\ u_{6k+3}=(2k+2,2k), & u_{6k+4}=(2k+1,2k+1), & u_{6k+5}=(2k+2,2k+1).
\end{array}$$

Let set the points of the concerned cells in such a way, for any $k\geq 0$,
$$\begin{array}{ll}
P_{2k,2k}=\left(2k+1-\frac{f(p)}{2},2k\right),  & P_{2k+1,2k-1}=\left(2k+1+\frac{f(p)}{2},2k\right), \\
P_{2k+1,2k}=\left(2k+1+\frac{2+f(p)}{4}, 2k + \frac{2-f(p)}{4}\right), &P_{2k+2,2k}=\left(2k+2,2k+1-\frac{f(p)}{2}\right), \\
P_{2k+1,2k+1}=\left(2k+2,2k+1+\frac{f(p)}{2}\right), & P_{2k+2,2k+1}=\left(2k+2 +\frac{2-f(p)}{4},2k+1+\frac{2+f(p)}{4}\right),
\end{array}$$
see Figure~\ref{fig:SAP<1/2}.
Now, let us compute the distances between two consecutive points along the sequence~$u$. Using the symmetries, it is sufficient to compute only the two following distances:
\begin{align*}
    \|P_{0,0} - P_{1,-1}\|_p & = \|(f(p),0)\|_p \text{ and } \\
    \|P_{1,0} - P_{1,-1}\|_p & = \bigg\|\bigg(\frac{2+f(p)}{4}-\frac{f(p)}{2},\frac{2-f(p)}{4}\bigg)\bigg\|_p\\
    & = \bigg\|\bigg(\frac{2-f(p)}{4},\frac{2-f(p)}{4}\bigg)\bigg\|_p.
\end{align*}    

For any $p\in[1,\infty]$, those two distances are equal to $f(p)$.

So $u$ is an infinite self-avoiding path of $\Gamma_{p,R}^{\cn}(P)$ for $R\geq f(p)$. On the other hand, it is not a path of $\Gamma_{p,R}^{\nn}(P)$ since the edge $(u_0,u_1) = \left((0,0),(1,-1)\right)$ is diagonal which is not an allowed edge.

\begin{figure}[ht!]
	\centering
	\begin{tikzpicture}[scale=1.2,>=Latex]
		\draw[step=1cm,color=gray,ultra thin] (-.3,-.3) grid (5.3,4.3);
		\node (A) [fill=red,inner sep=2pt,shape=circle] at (1-2/10,0) {};
		\draw [thick,->, color=red] (1-2/10,0) -- (1-2/10,0.4);
		\node (B) [fill=red,inner sep=2pt,shape=circle] at (1+2/10,0) {};
		\draw [thick,->, color=red] (1+2/10,0) -- (1+2/10,-0.4);
	 	\node (C) [fill=red,inner sep=2pt,shape=circle] at (1 + 1/2 + 2/20,1/2 - 2/20) {};
	 	\node (D) [fill=red,inner sep=2pt,shape=circle] at (2,1-2/10) {};
	  	\draw [thick, ->, color=red] (2,1-2/10) -- (2.4,1-2/10);
	  	\node (E) [fill=red,inner sep=2pt,shape=circle] at (2,1+2/10) {};
	  	\draw [thick, ->, color=red] (2,1+2/10) -- (1.6,1+2/10);
	  	\node (F) [fill=red,inner sep=2pt,shape=circle] at (2+1/2-2/20,1+1/2+2/20) {};
		\node (G) [fill=none,inner sep=2pt,shape=circle] at (3-2/10,2) {}; 
		\draw [color=red] (A) -- (B);
	  	\draw [color=red] (C) -- (B);
	  	\draw [color=red] (C) -- (D);
	  	\draw [color=red] (E) -- (D);
	  	\draw [color=red] (E) -- (F);
	  	\draw [color=red] (G) -- (F);
		
		\begin{scope}[xshift=2cm,yshift=2cm]
		\node (A) [fill=black,inner sep=2pt,shape=circle] at (1-2/10,0) {};
		\draw [thick,->, color=black] (1-2/10,0) -- (1-2/10,0.4);
		\node (B) [fill=black,inner sep=2pt,shape=circle] at (1+2/10,0) {};
		\draw [thick,->, color=black] (1+2/10,0) -- (1+2/10,-0.4);
	 	\node (C) [fill=black,inner sep=2pt,shape=circle] at (1 + 1/2 + 2/20,1/2 - 2/20) {};
	 	\node (D) [fill=black,inner sep=2pt,shape=circle] at (2,1-2/10) {};
	  	\draw [thick, ->, color=black] (2,1-2/10) -- (2.4,1-2/10);
	  	\node (E) [fill=black,inner sep=2pt,shape=circle] at (2,1+2/10) {};
	  	\draw [thick, ->, color=black] (2,1+2/10) -- (1.6,1+2/10);
	  	\node (F) [fill=black,inner sep=2pt,shape=circle] at (2+1/2-2/20,1+1/2+2/20) {};
		\node (G) [fill=black,inner sep=2pt,shape=circle] at (3-2/10,2) {}; 
	  	\draw [thick, ->, color=black] (3-2/10,2) -- (3-2/10,2.4);
		\draw  (A) -- (B);
	  	\draw  (C) -- (B);
	  	\draw  (C) -- (D);
	  	\draw (E) -- (D);
	  	\draw (E) -- (F);
	  	\draw (G) -- (F);
		\end{scope}
	\end{tikzpicture}
	\caption{Construction of an infinite self-avoiding path of $\Gamma_{p,R}^{\cn}(p)$ for $p=\infty$ and $R\geq 2/5$. The pattern that is repeated periodically is in red and the arrows indicate to which cell the points belong.}
	\label{fig:SAP<1/2}
\end{figure}

\paragraph{Proof of the upper bound for \boldmath$R^{\nn}_{\min}(p)$.}

Unlike the two previous cases, the upper bound is a non explicit value defined as the unique solution of the equation
\begin{equation}\label{eq:RMINLocal}
    (2-R)^p + (3-3R)^p = (4R)^p
\end{equation}
for $R\in[0,1]$.

\begin{lemma}\label{lem:R(p)}

Now, we present a self-avoiding path which gives us the lower bound for the nearest neighbor model when $p\in \left[\ln(2)/\ln(4/3),\infty\right]$. 
For any $p \in [1,\infty)$, Equation~\eqref{eq:RMINLocal} has a unique solution in $[0,1]$, denoted by $R(p)$. Furthermore, the function $p\mapsto R(p)$ is decreasing on $[1,\infty)$; $R(p)< 1/2$ for $p> \ln(2)/\ln(4/3)$; and $\lim_{p\to\infty} R(p)= 3/7$.
\end{lemma}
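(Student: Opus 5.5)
Divide \eqref{eq:RMINLocal} by $(4R)^p$ for $R\in(0,1]$ and set
$$g_p(R)=\Big(\frac{2-R}{4R}\Big)^p+\Big(\frac{3-3R}{4R}\Big)^p .$$
Solutions in $(0,1]$ of \eqref{eq:RMINLocal} are exactly the solutions of $g_p(R)=1$, and $R=0$ is not a solution since the left side equals $2^p+3^p>0$.

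\textbf{Existence and uniqueness.} Both ratios $a(R)=\frac{2-R}{4R}=\frac{1}{2R}-\frac14$ and $b(R)=\frac{3-3R}{4R}=\frac{3}{4R}-\frac34$ are positive and strictly decreasing on $(0,1)$. Hence $g_p$ is continuous and strictly decreasing on $(0,1]$, with $g_p(R)\to\infty$ as $R\to0^+$ and $g_p(1)=(1/4)^p<1$. The intermediate value theorem together with strict monotonicity gives exactly one root $R(p)\in(0,1)$.

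\textbf{Monotonicity in $p$.} At the root we have $a^p+b^p=1$ with $a,b>0$, so $a,b<1$. Therefore, for fixed $R$, the map $p\mapsto a(R)^p+b(R)^p$ is strictly decreasing wherever $a(R),b(R)<1$. Take $p<q$. Then $g_q(R(p))<g_p(R(p))=1$. Since $g_q$ is decreasing in $R$, this forces $R(q)<R(p)$, so $p\mapsto R(p)$ is decreasing.

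\textbf{The threshold $\ln 2/\ln(4/3)$.} At $R=1/2$ we get $a=\frac34$ and $b=\frac34$, so $g_p(1/2)=2(3/4)^p$. This is $<1$ exactly when $(4/3)^p>2$, that is, $p>\ln 2/\ln(4/3)$. In that case $g_p(1/2)<1=g_p(R(p))$, and because $g_p$ is decreasing this gives $R(p)<1/2$.

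\textbf{The limit.} The limit is the only step needing some care. Taking $p$-th roots, $g_p(R)=1$ reads $\|(a(R),b(R))\|_p=1$, and as $p\to\infty$ this should become $\max(a,b)=1$.

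First identify the limiting root. On $(0,1)$ we have $b-a=\frac{1}{4R}-\frac12$, so $b\ge a$ if and only if $R\le 1/2$. The equation $b=1$ gives $3-3R=4R$, i.e. $R=3/7<1/2$, so $\max(a,b)=b$ there.

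Now fix $\eta>0$.
\begin{itemize}
\item At $R=3/7+\eta$ we have $\max(a,b)<1$, so $g_p\to0$ and eventually $g_p<1$, which gives $R(p)<3/7+\eta$.
\item At $R=3/7-\eta$ we have $b>1$, so $g_p\ge b^p\to\infty$ and eventually $g_p>1$, which gives $R(p)>3/7-\eta$.
\end{itemize}
Hence $R(p)\to3/7$. This limit is consistent with the decreasing behaviour and with the value $R_{\min}^{\nn}(\infty)=3/7$ stated in Theorem~\ref{thm:Rmin}.
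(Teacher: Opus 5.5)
Your proof is correct. Its skeleton is the same as the paper's: strict monotonicity in $R$ plus the intermediate value theorem for existence and uniqueness, and a comparison between exponents for the monotonicity of $p\mapsto R(p)$. Your remark that $a,b<1$ at the root, so that $q\mapsto a^q+b^q$ is strictly decreasing, is the paper's inequality $\|(2-R,3-3R)\|_p>\|(2-R,3-3R)\|_q$ after normalizing by $4R$. The normalization also spares you the derivative computation.

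The other two steps are argued differently.

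For the threshold, the paper checks that $R(p)=1/2$ forces $(4/3)^p=2$ and then invokes monotonicity in $p$. You instead evaluate $g_p(1/2)=2(3/4)^p$ directly, which settles $R(p)<1/2$ for each $p>\ln 2/\ln(4/3)$ on its own.

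For the limit, the paper gets existence of $L$ from monotonicity and then asserts $4L=\|(2-L,3-3L)\|_\infty$. That passage to the limit, with exponent and argument varying together, is left unjustified; it needs an estimate such as $\|v\|_\infty\le\|v\|_p\le 2^{1/p}\|v\|_\infty$. The paper's choice of the dominant coordinate is also written with the inequality reversed: $2-R>3-3R$ holds for $R>1/2$, not $R<1/2$, though the conclusion $4L=3-3L$ is right. Your squeeze at $3/7\pm\eta$ uses $b(3/7)=1$, the sign of $b-a=\frac{1}{4R}-\frac12$, and the strict decrease of $g_p$ in $R$. It needs neither the a priori existence of the limit nor any uniform-convergence estimate, so it is the more self-contained argument.
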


\begin{proof}
    First, we prove that  Equation~\eqref{eq:RMINLocal} has a unique solution on the interval $[0,1]$. For any $p \in [1,\infty)$, consider the function on $[0,1]$,
    \begin{displaymath}
    g_p(R) = (4R)^p - (2-R)^p - (3-3R)^p = (4R)^p - \|2-R,3-3R\|_p^p.
    \end{displaymath}
    
	Its derivative is
	$$g_p'(R) = 4^p p R^{p-1} + p (2-R)^{p-1} +3^p p(1-R)^{p-1}.$$
    It is positive on $[0,1]$. Hence, $g_p$ is continuous and increasing, with $g_p(0)= -2^p -3^p<0$ and $g_p(1) = 4^p - 1 >0$. Hence, $g_p(R)=0$ has a unique solution on $[0,1]$.
    
	Now, let us prove that $p\mapsto R(p)$ is decreasing on $[1,\infty)$.
	For any $R\in[0,1]$ and $1\leq p<q$, $\|2-R,3-3R\|_p > \|2-R,3-3R\|_q$, by the inclusion of the ball for the $\mathcal{L}^p-$norm in the ball for the $\mathcal{L}^q-$norm.
	Hence, $4R(p) = \|2-R(p),3-3R(p)\|_p > \|2-R(p),3-3R(p)\|_q$ which implies that $g_q(R(p))>0$, and so $R(q) < R(p)$.
	
	Let us now prove that $\displaystyle R \left(\frac{\ln(2)}{\ln\left(\frac{4}{3}\right)}\right)=\frac{1}{2}$. Let $p\in[1,\infty)$ be such that $R(p)=\frac{1}{2}$. Then,  $\left(\frac{3}{2}\right)^p + \left(\frac{3}{2}\right)^p = 2^p$, so that $2 = \left(\frac{4}{3}\right)^p$.
	
	Since $p\mapsto R(p)$ is decreasing on $[1,\infty)$ with $R(p) > 0$, the limit $L=\lim_{p\to\infty}R(p)$ exists. It satisfies 
	$4L = \|2-L,3-3L\|_{\infty}$. By the above, $L<1/2$, and if $R<1/2$, $2-R > 3-3R$, then $4L = 3-3L$, which implies that $L = 3/7$.
\end{proof}

We denote $R(\infty)=3/7$. For any $p \in [1,\infty]$, we set the points:
$$\begin{array}{ll}
P_{0,0}=\left(\frac{R(p)}{2},0\right),  & P_{0,-1}=\left(\frac{1}{2}+\frac{R(p)}{4},-\frac{3}{4} + \frac{3R(p)}{4}\right),\\
P_{1,-1}=\left(1,-\frac{3}{2} + \frac{3R(p)}{2}\right), & P_{1,-2}=\left(1,-\frac{3}{2}+ \frac{R(p)}{2}\right).
\end{array}$$
which are represented by the red pattern in Figure~\ref{fig:SAP4V}. From these 4 points, we apply 3 transformations that are combinations of symmetries and rotations to obtain the 3 other colored patterns in Figure~\ref{fig:SAP4V}. Finally, the remaining points are obtained by applying translations of vector $(0,4k)$ for $k\in\NN$ to those 16 points.

Now, let us check that the distances between two consecutive points along the sequence thus obtained are less than $R(p)$. For that, we only need to compute the two following distances
\begin{align*}
	\|P_{1,-1} - P_{1,-2}\|_p & = \|(0,R(p))\|_p = R(p) \text{ and } \\
	\|P_{0,-1} - P_{0,0}\|_p & = \left\|\left(-\frac{1}{2}+\frac{R(p)}{4},-\frac{3}{4}+\frac{3R(p)}{4}\right)\right\|_p \\
	& = \left\{
	\begin{array}{ll}
		\displaystyle \left( \left(\frac{2-R(p)}{4}\right)^p + \left(\frac{3-3R(p)}{4}\right)^p\right)^{\frac{1}{p}} &\mbox{ if } p\in[1,\infty),\\[0.5cm]
		\displaystyle \max\left\{\frac{11}{28},\frac{12}{28}\right\} &\mbox{ if } p=\infty.\\
	\end{array}
	\right.
	\\
	& = R(p).
\end{align*}


The path $u$ obtained from those points is thus an infinite self-avoiding path of $\Gamma_{p,R}^{\nn}(P)$ for $R\geq R(p)$, see Figure~\ref{fig:SAP4V}.

\begin{figure}[ht!]
    \centering
    \begin{tikzpicture}[scale=1,>=Latex]
    \begin{scope}[rotate=-90]
    \draw[step=1cm,color=gray,ultra thin] (-.4,-.4) grid (5.4,8.4);
        \foreach \i in {0}{
        \node (A) [fill=red,inner sep=2pt,shape=circle] at (1,\i*4+3/14) {};
	    \draw [thick, ->, color=red] (1,\i*4+3/14) -- (.6,\i*4+3/14);
        \node (B) [fill=red,inner sep=2pt,shape=circle] at (10/7,\i*4+17/28) {};
        \node (C) [fill=red,inner sep=2pt,shape=circle] at (13/7,\i*4+1) {};
        \draw [thick, ->, color=red] (13/7,\i*4+1) -- (13/7,\i*4+1.4);
        \node (D) [fill=red,inner sep=2pt,shape=circle] at (16/7,\i*4+1) {};
        \draw [thick, ->, color=red] (16/7,\i*4+1) -- (16/7,\i*4+1.4);
        \node (E) [fill=darkpastelgreen,inner sep=2pt,shape=circle] at (19/7,\i*4+1) {};
        \draw [thick, ->, color=darkpastelgreen] (19/7,\i*4+1) -- (19/7,\i*4+.6);
        \node (F) [fill=darkpastelgreen,inner sep=2pt,shape=circle] at (22/7,\i*4+1) {};
        \draw [thick, ->, color=darkpastelgreen] (22/7,\i*4+1) -- (22/7,\i*4+.6);
        \node (G) [fill=darkpastelgreen,inner sep=2pt,shape=circle] at (25/7,\i*4+ 39/28) {};
        \node (H) [fill=darkpastelgreen,inner sep=2pt,shape=circle] at (4,\i*4+25/14) {};
        \draw [thick, ->, color=darkpastelgreen] (4,\i*4+25/14) -- (4.4,\i*4+25/14);
        \node (I) [fill=black,inner sep=2pt,shape=circle] at (4,\i*4+31/14) {};
        \draw [color=red] (A) -- (B);
        \draw [color=red] (C) -- (B);
        \draw [color=red] (C) -- (D);
        \draw [color=red] (E) -- (D);
        \draw [color=darkpastelgreen] (E) -- (F);
        \draw [color=darkpastelgreen] (G) -- (F);
        \draw [color=darkpastelgreen] (G) -- (H);
        \draw [color=darkpastelgreen] (I) -- (H);
        \begin{scope}[yscale=1,xscale=-1,xshift=-5cm,yshift=2cm]
        \node (A) [fill=blue,inner sep=2pt,shape=circle] at (1,\i*4+3/14) {};
	    \draw [thick, ->, color=blue] (1,\i*4+3/14) -- (.6,\i*4+3/14);
        \node (B) [fill=blue,inner sep=2pt,shape=circle] at (10/7,\i*4+17/28) {};
        \node (C) [fill=blue,inner sep=2pt,shape=circle] at (13/7,\i*4+1) {};
        \draw [thick, ->, color=blue] (13/7,\i*4+1) -- (13/7,\i*4+1.4);
        \node (D) [fill=blue,inner sep=2pt,shape=circle] at (16/7,\i*4+1) {};
        \draw [thick, ->, color=blue] (16/7,\i*4+1) -- (16/7,\i*4+1.4);
        \node (E) [fill=purple,inner sep=2pt,shape=circle] at (19/7,\i*4+1) {};
        \draw [thick, ->, color=purple] (19/7,\i*4+1) -- (19/7,\i*4+.6);
        \node (F) [fill=purple,inner sep=2pt,shape=circle] at (22/7,\i*4+1) {};
        \draw [thick, ->, color=purple] (22/7,\i*4+1) -- (22/7,\i*4+.6);
        \node (G) [fill=purple,inner sep=2pt,shape=circle] at (25/7,\i*4+ 39/28) {};
        \node (H) [fill=purple,inner sep=2pt,shape=circle] at (4,\i*4+25/14) {};
        \draw [thick, ->, color=purple] (4,\i*4+25/14) -- (4.4,\i*4+25/14);
        \node (I) [fill=black,inner sep=2pt,shape=circle] at (4,\i*4+31/14) {};
        \draw [color=blue] (A) -- (B);
        \draw [color=blue] (C) -- (B);
        \draw [color=blue] (C) -- (D);
        \draw [color=blue] (E) -- (D);
        \draw [color=purple] (E) -- (F);
        \draw [color=purple] (G) -- (F);
        \draw [color=purple] (G) -- (H);
        \draw [color=purple] (I) -- (H);
        \end{scope}
    }
        \foreach \i in {1}{
        \node (A) [fill=black,inner sep=2pt,shape=circle] at (1,\i*4+3/14) {};
	    \draw [thick, ->, color=black] (1,\i*4+3/14) -- (.6,\i*4+3/14);
        \node (B) [fill=black,inner sep=2pt,shape=circle] at (10/7,\i*4+17/28) {};
        \node (C) [fill=black,inner sep=2pt,shape=circle] at (13/7,\i*4+1) {};
        \draw [thick, ->, color=black] (13/7,\i*4+1) -- (13/7,\i*4+1.4);
        \node (D) [fill=black,inner sep=2pt,shape=circle] at (16/7,\i*4+1) {};
        \draw [thick, ->, color=black] (16/7,\i*4+1) -- (16/7,\i*4+1.4);
        \node (E) [fill=black,inner sep=2pt,shape=circle] at (19/7,\i*4+1) {};
        \draw [thick, ->, color=black] (19/7,\i*4+1) -- (19/7,\i*4+.6);
        \node (F) [fill=black,inner sep=2pt,shape=circle] at (22/7,\i*4+1) {};
        \draw [thick, ->, color=black] (22/7,\i*4+1) -- (22/7,\i*4+.6);
        \node (G) [fill=black,inner sep=2pt,shape=circle] at (25/7,\i*4+ 39/28) {};
        \node (H) [fill=black,inner sep=2pt,shape=circle] at (4,\i*4+25/14) {};
        \draw [thick, ->, color=black] (4,\i*4+25/14) -- (4.4,\i*4+25/14);
        \node (I) [fill=black,inner sep=2pt,shape=circle] at (4,\i*4+31/14) {};
        \draw [color=black] (A) -- (B);
        \draw [color=black] (C) -- (B);
        \draw [color=black] (C) -- (D);
        \draw [color=black] (E) -- (D);
        \draw [color=black] (E) -- (F);
        \draw [color=black] (G) -- (F);
        \draw [color=black] (G) -- (H);
        \draw [color=black] (I) -- (H);
        \begin{scope}[yscale=1,xscale=-1,xshift=-5cm,yshift=2cm]
        \node (A) [fill=black,inner sep=2pt,shape=circle] at (1,\i*4+3/14) {};
	    \draw [thick, ->, color=black] (1,\i*4+3/14) -- (.6,\i*4+3/14);
        \node (B) [fill=black,inner sep=2pt,shape=circle] at (10/7,\i*4+17/28) {};
        \node (C) [fill=black,inner sep=2pt,shape=circle] at (13/7,\i*4+1) {};
        \draw [thick, ->, color=black] (13/7,\i*4+1) -- (13/7,\i*4+1.4);
        \node (D) [fill=black,inner sep=2pt,shape=circle] at (16/7,\i*4+1) {};
        \draw [thick, ->, color=black] (16/7,\i*4+1) -- (16/7,\i*4+1.4);
        \node (E) [fill=black,inner sep=2pt,shape=circle] at (19/7,\i*4+1) {};
        \draw [thick, ->, color=black] (19/7,\i*4+1) -- (19/7,\i*4+.6);
        \node (F) [fill=black,inner sep=2pt,shape=circle] at (22/7,\i*4+1) {};
        \draw [thick, ->, color=black] (22/7,\i*4+1) -- (22/7,\i*4+.6);
        \node (G) [fill=black,inner sep=2pt,shape=circle] at (25/7,\i*4+ 39/28) {};
        \node (H) [fill=black,inner sep=2pt,shape=circle] at (4,\i*4+25/14) {};
        \draw [thick, ->, color=black] (4,\i*4+25/14) -- (4.4,\i*4+25/14);
        \node (I) [fill=black,inner sep=2pt,shape=circle] at (4,\i*4+31/14) {};
        \draw [thick, ->, color=black] (4,\i*4+31/14) -- (4.4,\i*4+31/14);
        \draw (A) -- (B);
        \draw (C) -- (B);
        \draw (C) -- (D);
        \draw (E) -- (D);
        \draw (E) -- (F);
        \draw (G) -- (F);
        \draw (G) -- (H);
        \draw (I) -- (H);
        \end{scope}
    }
    \end{scope}
    \end{tikzpicture}        
    \caption{Construction of an infinite self-avoiding path of $\Gamma_{p,R}^{\nn}(p)$ for the case $p=\infty$ and $R\geq3/7$. The pattern that is repeated periodically is represented in colors. The arrows indicate to which cell the points belong.}
    \label{fig:SAP4V}
\end{figure}

\subsection{Forbidden paths}\label{sec:FPaths}
We now address the lower bound. Since  $R_{\min}^{\cn}(p)\leq 1/2$ and $R_{\min}^{\nn}(p)\leq 1/2$ for any $p\geq1$, we assume that $R<1/2$. 
For a path $(u_k)_{0\leq k\leq n}$, we denote by $s_k = u_k - u_{k-1}$ the step from $u_{k-1}$ to $u_{k}$, for $1\leq k\leq n$. Given that $R<1/2$, it follows that $$\forall k\in\{1,\ldots,n\}, \quad s_{k}\in\step
=\{\swa,\la,\nwa,\da,\ua,\sea,\ra,\nea\},$$ where each arrow represents the corresponding vector in $\{-1,0,1\}^2\setminus\{(0,0)\}$ (for example $\ra = (1,0)$ and $\swa =(-1,-1)$).\par

The following lemma gives a necessary condition for a path to belong to $\Gamma_{p,R}^{\cn}(P)$ or to $\Gamma^{\nn}_{p,R}(P)$.

\begin{lemma}\label{lem:R<p/q} 
    Let $a,b\in \ZZ_{>0} = \{1,2,\dots\}$, let $R \in (0, a/b)$, and let $u_0,\ldots,u_n\in\ZZ^2$. If there exists $P\in\mathcal P$ such that $u=(u_0,\ldots,u_n)$ is a path of $\Gamma_{p,R}^{\cn}(P)$ (resp.\ of $\Gamma^{\nn}_{p,R}(P)$), then 
    \begin{equation}
    \forall k\in \{0,\ldots, n-b\}, \quad \|u_{k+b}-u_k\|_\infty \leq a.
    \end{equation}
\end{lemma}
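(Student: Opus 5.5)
The approach is to combine the triangle inequality for the $\mathcal{L}^p$-norm along the path with the fact that every point lies in its closed unit cell.

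Fix $P\in\mathcal P$ such that $u$ is a path of $\Gamma_{p,R}^{\cn}(P)$. The $\nn$ case is contained in this one, since $E^{\nn}_{p,R}(P)\subset E^{\cn}_{p,R}(P)$, so a path of $\Gamma^{\nn}_{p,R}(P)$ is also a path of $\Gamma^{\cn}_{p,R}(P)$. Fix $k\in\{0,\ldots,n-b\}$.

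\textbf{Step 1: bound the distance between points.} Each consecutive pair satisfies $\|P_{u_{l+1}}-P_{u_l}\|_p\leq R$. The triangle inequality then gives
\begin{equation*}
\|P_{u_{k+b}}-P_{u_k}\|_p\leq \sum_{l=k}^{k+b-1}\|P_{u_{l+1}}-P_{u_l}\|_p\leq bR<a.
\end{equation*}
Since $\|\cdot\|_\infty\leq\|\cdot\|_p$ for every $p\in[1,\infty]$, each coordinate of $P_{u_{k+b}}-P_{u_k}$ has absolute value strictly less than $a$.

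\textbf{Step 2: transfer the bound to the lattice indices.} Write $u_k=(i,j)$ and $u_{k+b}=(i',j')$. Because $X_{i,j}\in[i,i+1]$ and $X_{i',j'}\in[i',i'+1]$, we have
\begin{equation*}
|X_{i',j'}-X_{i,j}|\geq |i'-i|-1.
\end{equation*}
Combined with Step 1 this gives $|i'-i|-1<a$, that is $|i'-i|<a+1$. Since $i'-i$ and $a$ are integers, $|i'-i|\leq a$. The same argument on the second coordinate gives $|j'-j|\leq a$, hence $\|u_{k+b}-u_k\|_\infty\leq a$.

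The only delicate point is the loss of one unit when passing from the points to the cell indices. It is absorbed exactly because the inequality $bR<a$ is strict and the index differences are integers. Everything else is routine.
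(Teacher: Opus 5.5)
Your proof is correct and follows essentially the paper's argument. The paper bounds each coordinate step by $R$ first and then sums, while you sum in the $\mathcal{L}^p$-norm and then use $\|\cdot\|_\infty\leq\|\cdot\|_p$; both give the same estimate $|X_{u_{k+b}}-X_{u_k}|\leq bR<a$, followed by the same integer-rounding step (which you justify via $|X_{i',j'}-X_{i,j}|\geq|i'-i|-1$, a step the paper leaves implicit).
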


\begin{proof}
    It is enough to prove the result for $k=0$. Assume that for all $l\in\{0,\ldots,b-1\}$, $\|P_{u_{l+1}}-P_{u_{l}}\|_p\leq R$. 
    Then $|X_{u_{l+1}}-X_{u_{l}}|\leq R$, and it follows that 
    $|X_{u_b} - X_{u_0}|\leq  bR< a.$
    Consequently, $|x_b-x_0|<a+1$, so that $|x_b-x_0|\leq a$. Similarly, $|y_b - y_0|\leq a$.
\end{proof}

Lemma~\ref{lem:R<p/q} allows us to display patterns that cannot appear in a path. In the rest of the article, we apply it in three cases.
\begin{enumerate}[label=(\roman*)]
    \item $(a,b)=(1,2)$, see Figure~\ref{fig:R<1/2} for a list of forbidden patterns when $R<1/2$. \\
    This case is used for all the lower bounds of Section~\ref{sec:LBR_min}.
    \item $(a,b)=(2,5)$, see Figure~\ref{fig:R<2/5} for some examples of forbidden patterns when $R<2/5$.\\
    This case is used for the lower bound of $R_{\min}^{\cn}(\infty)$ in  Section~\ref{sec:LBR_minCNInfinity}.
    \item $(a,b)=(3,7)$, see Figure~\ref{fig:R<3/7} for some examples of forbidden patterns when $R<3/7$. \\
    This case is used for the lower bound of $R_{\min}^{\nn}(\infty)$ in Section~\ref{sec:LBR_minNNInfinity}.
\end{enumerate}

\begin{figure}[ht!]
    \centering
    \begin{subfigure}{\textwidth}
    \centering
	\begin{tikzpicture}
		\begin{scope}[xshift = 0cm]
			\node[fill=black,inner sep=2pt,shape=circle] at (0,0) {};
			\node[fill=black,inner sep=2pt,shape=circle] at (1,0) {};
			\node[fill=black,inner sep=2pt,shape=circle] at (2,0) {};
			\draw[ultra thick] (0,0) -- (1,0) -- (2,0);
		\end{scope}
		\begin{scope}[xshift = 4cm]
			\node[fill=black,inner sep=2pt,shape=circle] at (0,0) {};
			\node[fill=black,inner sep=2pt,shape=circle] at (1,0) {};
			\node[fill=black,inner sep=2pt,shape=circle] at (2,1) {};
			\draw[ultra thick] (0,0) -- (1,0) -- (2,1);
		\end{scope}
		\begin{scope}[xshift = 8cm]
			\node[fill=black,inner sep=2pt,shape=circle] at (0,0) {};
			\node[fill=black,inner sep=2pt,shape=circle] at (1,1) {};
			\node[fill=black,inner sep=2pt,shape=circle] at (2,0) {};
			\draw[ultra thick] (0,0) -- (1,1) -- (2,0);
		\end{scope}
		\begin{scope}[xshift = 12cm]
			\node[fill=black,inner sep=2pt,shape=circle] at (0,0) {};
			\node[fill=black,inner sep=2pt,shape=circle] at (1,1) {};
			\node[fill=black,inner sep=2pt,shape=circle] at (2,2) {};
			\draw[ultra thick] (0,0) -- (1,1) -- (2,2);
		\end{scope}
	\end{tikzpicture}
	\caption{$(a,b)=(1,2)$}
    \label{fig:R<1/2}
	\end{subfigure}
	\hfill
    \vspace{.8cm}
    \begin{subfigure}{\textwidth}
    \centering
    \begin{tikzpicture}
		\begin{scope}[ultra thick,inner sep =2pt,shape=circle,xshift=0cm]
			\node[fill=black] at (0,0) {};
			\node[fill=black] at (1,0) {};
			\node[fill=black] at (1,1) {};
			\node[fill=black] at (2,1) {};
			\node[fill=black] at (2,0) {};
			\node[fill=black] at (3,0) {};
			\draw (0,0) -- (1,0) -- (1,1) -- (2,1) -- (2,0) -- (3,0);
		\end{scope}
		\begin{scope}[ultra thick,inner sep =2pt,shape=circle,xshift=4cm]
			\node[fill=black] at (0,1) {};
			\node[fill=black] at (1,0) {};
			\node[fill=black] at (1,1) {};
			\node[fill=black] at (2,1) {};
			\node[fill=black] at (2,2) {};
			\node[fill=black] at (3,2) {};
			\draw (0,1) -- (1,0) -- (1,1) -- (2,1) -- (2,2) -- (3,2);
		\end{scope}
		\begin{scope}[ultra thick,inner sep =2pt,shape=circle,xshift=8cm]
			\node[fill=black] at (0,1) {};
			\node[fill=black] at (1,0) {};
			\node[fill=black] at (1,1) {};
			\node[fill=black] at (2,0) {};
			\node[fill=black] at (2,1) {};
			\node[fill=black] at (3,0) {};
			\draw (0,1) -- (1,0) -- (1,1) -- (2,0) -- (2,1) -- (3,0);
		\end{scope}
	\end{tikzpicture}
    \caption{$(a,b)=(2,5)$}
    \label{fig:R<2/5}
    \end{subfigure}
    \hfill
    \vspace{.8cm}
    \begin{subfigure}{\textwidth}
    \centering
    \begin{tikzpicture}
		\begin{scope}[ultra thick,inner sep =2pt,shape=circle,xshift=0cm]
			\node[fill=black] at (0,0) {};
			\node[fill=black] at (1,0) {};
			\node[fill=black] at (1,1) {};
			\node[fill=black] at (2,1) {};
			\node[fill=black] at (2,0) {};
			\node[fill=black] at (3,0) {};
			\node[fill=black] at (3,1) {};
			\node[fill=black] at (4,1) {};
			\draw (0,0) -- (1,0) -- (1,1) -- (2,1) -- (2,0) -- (3,0) -- (3,1) -- (4,1);
		\end{scope}
		\begin{scope}[ultra thick,inner sep =2pt,shape=circle,xshift=5cm]
			\node[fill=black] at (0,0) {};
			\node[fill=black] at (1,0) {};
			\node[fill=black] at (1,1) {};
			\node[fill=black] at (2,1) {};
			\node[fill=black] at (2,2) {};
			\node[fill=black] at (3,2) {};
			\node[fill=black] at (3,1) {};
			\node[fill=black] at (4,1) {};
			\draw (0,0) -- (1,0) -- (1,1) -- (2,1) -- (2,2) -- (3,2) -- (3,1) -- (4,1);
		\end{scope}
		\begin{scope}[ultra thick,inner sep =2pt,shape=circle,xshift=10cm,yshift=2cm]
			\node[fill=black] at (0,-2) {};
			\node[fill=black] at (1,-2) {};
			\node[fill=black] at (1,-1) {};
			\node[fill=black] at (2,-1) {};
			\node[fill=black] at (2,0) {};
			\node[fill=black] at (3,0) {};
			\node[fill=black] at (3,1) {};
			\node[fill=black] at (4,1) {};
			\draw (0,-2) -- (1,-2) -- (1,-1) -- (2,-1) -- (2,0) -- (3,0) -- (3,1) -- (4,1);
		\end{scope}
	\end{tikzpicture}
    \caption[{\it (i)}]{$(a,b)=(3,7)$}
    \label{fig:R<3/7}
    \end{subfigure}
    
    \caption{Examples of forbidden patterns for a radius $R<a/b$, see Lemma~\ref{lem:R<p/q}.}
    \label{fig:R<a/b}
\end{figure}

\subsection{Lower bounds}\label{sec:LBR_min}

We distinguish two separate cases for each model: $p=\infty$ and $p\in[1,\infty)$. For $p=\infty$, the proof consists in listing all possible beginnings of valid self-avoiding paths and showing that it is impossible to construct an infinite one, using Lemma~\ref{lem:R<p/q}. For $p\in[1,\infty)$, we assume that there exists an infinite self-avoiding path and show that a given positive quantity decreases at least linearly along this path, which leads to an absurdity.

\subsubsection{Complete neighbor model and $p=\infty$}\label{sec:LBR_minCNInfinity}

Let $u = (u_k)_{k \geq 0}$ be an infinite self-avoiding path of $\Gamma_{\infty,R}^{\cn}(P)$, for some $R<2/5$. By symmetry, we assume without loss of generality that $s_1\in\{\ra, \nea\}$.

To begin with, let us examine the case $s_1=\ra$. By Lemma~\ref{lem:R<p/q}(i), $s_2\notin\{\sea,\ra,\nea\}$. By symmetry, we assume that $s_2 \in \{\nwa,\ua\}$. 
Figure~\ref{fig:InfNorm} summarizes the case distinction that follows.

\begin{itemize}[label=$\bullet$]
\item Case $s_2=\ua$. By Lemma~\ref{lem:R<p/q}(i), $s_3\notin\{\nwa,\ua,\nea\}$ and since $u$ is self-avoiding, $s_3\notin\{\swa,\da\}$. So, $s_3\in\{\sea,\ra,\la\}$.
	\begin{itemize}[label=$\bullet$]
		\item Case $s_3=\sea$. By Lemma~\ref{lem:R<p/q}(i), $s_4\notin\{\swa,\da,\sea,\ra,\nea\}$ and since $u$ is self-avoiding, $s_4\notin\{\la,\nwa\}$. It follows that $s_4=\ua$. Let us now examine the possible values for $s_5$.
		\begin{itemize}[label=$\bullet$]
		    \item If $s_5\in\{\swa,\la,\da\}$, $u$ is not self-avoiding.
		    \item If $s_5\in\{\nwa,\ua,\nea\}$, by Lemma~\ref{lem:R<p/q}(i), $u$ is not valid.
		    \item If $s_5\in\{\sea,\ra\}$, by Lemma~\ref{lem:R<p/q}(ii), $u$ is not valid.
		\end{itemize}
So, there is no allowed value for $s_5$. 
Consequently, the pattern $(\ra,\ua,\sea)$ and all the patterns derived from it by symmetry or rotation (``green'' patterns, see Figure~\ref{fig:InfNorm}) are forbidden in $u$.

		For the rest of the proof, we make implicit the uses of the self-avoiding property and of Lemma~\ref{lem:R<p/q}(i). 

	\item Case $s_3=\ra$. Then, $s_4\in\{\da,\ua,\nwa\}$.
	\begin{itemize}[label=$\bullet$]
		\item If $s_4=\da$, then  by Lemma~\ref{lem:R<p/q}(ii), $u$ cannot be extended. 
		\item If $s_4=\ua$, then $s_5=\la$, and by Lemma~\ref{lem:R<p/q}(ii), $u$ cannot be extended. 
		\item If $s_4=\nwa$, then $u$ contains the green pattern $(\ua,\ra,\nwa)$, which is forbidden, see Line 4 of Figure~\ref{fig:InfNorm}. 
	\end{itemize}   
So, there is no allowed value for $s_4$.  Consequently, the pattern $(\ra,\ua,\ra)$ and all the patterns derived from it by symmetry or rotation (``blue'' patterns, see Figure~\ref{fig:InfNorm}) are forbidden in $u$.
	\item Case $s_3=\la$. Then $s_4\in\{\ua,\nea\}$.
		\begin{itemize}[label=$\bullet$]
		\item If $s_4=\ua$, then $u$ contains a blue pattern, which is forbidden,  see Line 5 of Figure~\ref{fig:InfNorm}.
		\item If $s_4=\nea$, then $u$ contains a green pattern, which is forbidden, see Line 6 of Figure~\ref{fig:InfNorm}.
		\end{itemize}
	So, there is no allowed value for $s_4$.
\end{itemize}

Consequently, if $s_1=\ra$ and $s_2=\ua$, there is no valid way to extend the path. It follows that the pattern $(\ra,\ua)$ and all the patterns derived from it by symmetry or rotation (``orange'' patterns, see Figure~\ref{fig:InfNorm}) are forbidden in $u$. 
\item Case $s_2 = \nwa$. Then $s_3=\ra$. We have $s_4\in\{\ua,\nwa\}$.
    \begin{itemize}[label=$\bullet$]
    	\item If $s_4=\ua$, then $u$ contains an orange pattern, which is forbidden, see Line 8 of Figure~\ref{fig:InfNorm}.
    	\item If $s_4=\nwa$, then $s_5=\ra$, and by Lemma~\ref{lem:R<p/q}(ii), $u$ cannot be extended.
    \end{itemize}
\end{itemize}

Finally, we obtain that $u$ cannot begin by $s_1=\ra$, and more generally, that an infinite self-avoiding path of $\Gamma_{\infty,R}^{\cn}(P)$ cannot contain any horizontal or vertical edges. But, by Lemma~\ref{lem:R<p/q}(i), having two consecutive diagonal edges is forbidden. So, we conclude that $u$ must be finite for any $R<2/5$.

\begin{figure}[ht!]
    \centering
    \scalebox{0.6}{
    \begin{tikzpicture}[scale=0.3]

    \draw [ultra thick] (0,-1) -- (2,-1);
    \draw  (3,-1) -- (5,11);
    \draw  (3,-1) -- (5,-13);
    
\node at (1,24) {$k=1$};

\draw [dashed] (-2,26) -- (-2,-23);
\draw [dashed] (4,26) -- (4,-23);

\begin{scope}[xshift=6cm,yshift=10cm]
    \draw [ultra thick,color=orange] (0,0) -- (2,0) -- (2,2);
    \draw  (3,1) -- (5,1);
    \draw  (3,1) -- (5,10);
    \draw  (3,1) -- (5,-11);
\end{scope}

\begin{scope}[xshift=6cm,yshift=-14cm]
    \draw [ultra thick] (0,0) -- (2,0) -- (0,2);
    \draw  (3,1) -- (5,1);
\end{scope}

\node at (7,24) {$k=2$};
\draw [dashed] (10,26) -- (10,-23);


\begin{scope}[xshift=12cm,yshift=19cm]
    \draw [ultra thick,color=green] (0,0) -- (2,0) -- (2,2) -- (4,0);
    \draw (5,1) -- (7,1);
\end{scope}

\begin{scope}[xshift=12cm,yshift=10cm]
    \draw [ultra thick,color=blue] (0,0) -- (2,0) -- (2,2) -- (4,2);
    \draw (5,1) -- (7,1); 
    \draw (5,1) -- (7,6); 
    \draw (5,1) -- (7,-5); 
\end{scope}

\begin{scope}[xshift=13cm,yshift=-2cm]
    \draw [ultra thick] (0,0) -- (2,0) -- (2,2) -- (0,2);
    \draw (4,1) -- (6,1);
    \draw (4,1) -- (6,-5);
\end{scope}

\begin{scope}[xshift=13cm,yshift=-14cm]
    \draw [ultra thick] (0,0) -- (2,0) -- (0,2) -- (2,2);
    \draw  (4,1) -- (6,1);
    \draw  (4,1) -- (6,-5);
\end{scope}

\node at (14,24) {$k=3$};
\draw [dashed] (18,26) -- (18,-23);


\begin{scope}[xshift=20cm,yshift=19cm]
    \draw [ultra thick] (0,0) -- (2,0) -- (2,2) -- (4,0) -- (4,2);
\end{scope}

\begin{scope}[xshift=20cm,yshift=15cm]
    \draw [ultra thick] (0,0) -- (2,0) -- (2,2) -- (4,2) -- (4,0);
\end{scope}

\begin{scope}[xshift=20cm,yshift=9cm]
    \draw [ultra thick] (0,0) -- (2,0) -- (2,2) -- (4,2) -- (4,4);
    \draw (5,2) -- (7,2);
\end{scope}

\begin{scope}[xshift=20cm,yshift=3cm]
    \draw [ultra thick] (0,0) -- (2,0);
    \draw [ultra thick,color=green] (2,-.06) -- (2,2) -- (4,2) -- (2,4);
\end{scope}

\begin{scope}[xshift=21cm,yshift=-3cm]
    \draw [ultra thick] (0,0) -- (2,0);
    \draw [ultra thick,color=blue] (2,-.06) -- (2,2) -- (0,2) -- (0,4);
\end{scope}

\begin{scope}[xshift=21cm,yshift=-9cm]
    \draw [ultra thick] (0,0) -- (2,0);
    \draw [ultra thick,color=green] (2,-.06) -- (2,2) -- (0,2) -- (2,4);
\end{scope}

\begin{scope}[xshift=21cm,yshift=-15cm]
    \draw [ultra thick] (0,0) -- (2,0) -- (0,2) -- (2,2) -- (0,4);
    \draw  (4,2) -- (6,2);
\end{scope}

\begin{scope}[xshift=21cm,yshift=-21cm]
    \draw [ultra thick] (0,0) -- (2,0) -- (0,2);
    \draw [ultra thick,color=orange] (-.05,2) -- (2,2) -- (2,4);
\end{scope}

\node at (22,24) {$k=4$};
\draw [dashed] (26,26) -- (26,-23);


\begin{scope}[xshift=28cm,yshift=9cm]
    \draw [ultra thick] (0,0) -- (2,0);
    \draw [ultra thick] (2,-.06) -- (2,2) -- (4,2) -- (4,4) -- (2,4);
\end{scope}

\begin{scope}[xshift=29cm,yshift=-15cm]
    \draw [ultra thick] (0,0) -- (2,0) -- (0,2) -- (2,2) -- (0,4) -- (2,4);
\end{scope}

\node at (30,24) {$k=5$};
\draw [dashed] (34,26) -- (34,-23);

\node at (38,20) {\gras{Line 1}};
\node at (38,16) {\gras{Line 2}};
\node at (38,11) {\gras{Line 3}};
\node at (38,5) {\gras{Line 4}};
\node at (38,-1) {\gras{Line 5}};
\node at (38,-7) {\gras{Line 6}};
\node at (38,-13) {\gras{Line 7}};
\node at (38,-19) {\gras{Line 8}};

\end{tikzpicture}
    }
    \caption{Possible self-avoiding paths of length $k$ of $\Gamma_{\infty,R}^{\cn}(P)$ with starting step $s_1=\ra$, for $R<2/5$.
    }
    \label{fig:InfNorm}
\end{figure}

\subsubsection{Nearest neighbor model and $p=\infty$}\label{sec:LBR_minNNInfinity}

Let $u = (u_k)_{k \geq 0}$ be an infinite self-avoiding path of $\Gamma_{\infty,R}^{\nn}(P)$, for some $R<3/7$. By definition of the model, $s_k\in\{\la,\da,\ua,\ra\}$ for all $k\geq 0$. Furthermore, by Lemma~\ref{lem:R<p/q}(i), horizontal and vertical steps alternate in $u$. We assume without loss of generality that $(s_1,s_2)=(\ra,\ua)$, which implies that for all $k\geq0$, $s_{2k+1}\in\{\la,\ra\}$ and $s_{2k+2}\in\{\da,\ua\}$. For the rest of the proof, we make implicit the use of the self-avoiding property and of Lemma~\ref{lem:R<p/q}(i). Figures~\ref{fig:RminInfP1} and~\ref{fig:RminInfP2} summarize the case distinction that follows.

\begin{itemize}[label=$\bullet$]
    \item Case $(s_3,s_4)=(\ra,\ua)$.
    \begin{itemize}[label=$\bullet$]    
        \item Case $(s_5,s_6)=(\ra,\ua)$. By Lemma~\ref{lem:R<p/q}(iii), we must then have $s_7=\la$, see Line~1 of Figure~\ref{fig:RminInfP2}. By applying Lemma~\ref{lem:R<p/q}(iii) once again, we obtain that $u$ cannot be extended.
        \item Case $(s_5,s_6)=(\ra,\da)$. By Lemma~\ref{lem:R<p/q}(iii), $u$ cannot be extended, see Line~2 of Figure~\ref{fig:RminInfP2}.
        \item Case $s_5=\la$.
        For the study of this case,
        we refer to Figure~\ref{fig:RminInfP2} (Lines 3 and 4), that examines the different ways to continue the path, and shows that they all eventually reach a stage where they can no longer be extended.
    \end{itemize}
    Consequently, the pattern $(\ra,\ua,\ra,\ua)$ and all the patterns derived from it by symmetry or rotation (``red'' patterns, see Figure~\ref{fig:RminInfP1} and~\ref{fig:RminInfP2}) are forbidden in $u$.

    \item Case $(s_3,s_4)=(\ra,\da)$. We have $s_5=\ra$.
    If $s_6=\da$, then $u$ contains a red pattern, which is forbidden, see Line~2 of Figure~\ref{fig:RminInfP1}. So, $s_6=\ua$. By Lemma~\ref{lem:R<p/q}(iii), $u$ cannot be extended, see Line~3 of Figure~\ref{fig:RminInfP1}.
\end{itemize}
    Consequently, the pattern $(\ra,\ua,\ra)$ and all the patterns derived from it by symmetry or rotation (``blue'' patterns, see Figure~\ref{fig:RminInfP1}) are forbidden in $u$.   
\begin{itemize}
    \item Case $s_3=\la$. Then, $s_4=\ua$. But $u$ contains a blue pattern, which is forbidden, see Line~4 of Figure~\ref{fig:RminInfP1}.
\end{itemize}

We conclude that for $R<3/7$, $u$ must be finite which implies that $R_{\min}\geq 3/7$.

\begin{figure}[ht!]
    \centering
    \scalebox{0.6}{
    \begin{tikzpicture}[scale=0.3]

    \draw [ultra thick] (0,0) -- (2,0);
    \draw  (3,0) -- (5,0);

\node at (1,15) {$k=1$};
\draw [dashed] (-2,17) -- (-2,-9);
\draw [dashed] (4,17) -- (4,-9);

\begin{scope}[xshift=6cm,yshift=-1cm]
    \draw [ultra thick] (0,0) -- (2,0) -- (2,2);
    \draw (3,1) -- (5,6);
    \draw (3,1) -- (5,-4);
\end{scope}

\node at (7,15) {$k=2$};
\draw [dashed] (10,17) -- (10,-9);

\begin{scope}[xshift=12cm,yshift=4cm]
    \draw [ultra thick,color=blue] (0,0) -- (2,0) -- (2,2) -- (4,2);
    \draw (5,1) -- (7,7); 
    \draw (5,1) -- (7,1); 
\end{scope}

\begin{scope}[xshift=13cm,yshift=-6cm]
    \draw [ultra thick] (0,0) -- (2,0) -- (2,2) -- (0,2);
    \draw (4,1) -- (6,1);
\end{scope}

\node at (14,15) {$k=3$};
\draw [dashed] (18,17) -- (18,-9);


\begin{scope}[xshift=20cm,yshift=9cm]
    \draw [ultra thick,color=red] (0,0) -- (2,0) -- (2,2) -- (4,2) -- (4,4);
    \draw (5,2) -- (7,2);
    \node at (11,2.75) {\gras{see}};
    \node at (11.2,1.25) {\gras{Figure~\ref{fig:RminInfP2}}};
\end{scope}

\begin{scope}[xshift=20cm,yshift=4cm]
    \draw [ultra thick] (0,0) -- (2,0) -- (2,2) -- (4,2) -- (4,0);
    \draw (5,1) -- (7,1);
\end{scope}

\begin{scope}[xshift=21cm,yshift=-7cm]
    \draw [ultra thick] (-.054,0) -- (2,0);
    \draw [ultra thick,color=blue] (2,-.054) -- (2,2) -- (0,2) -- (0,4);
\end{scope}

\node at (22,15) {$k=4$};

\draw [dashed] (26,17) -- (26,-9);


\begin{scope}[xshift=28cm,yshift=4cm]
    \draw [ultra thick] (0,0) -- (2,0) -- (2,2) -- (4,2) -- (4,0) -- (6,0);
    \draw (7,1) -- (9,1);
    \draw (7,1) -- (9,-4);
\end{scope}

\node at (31,15) {$k=5$};
\draw [dashed] (36,17) -- (36,-9);


\begin{scope}[xshift=38cm,yshift=-1cm]
    \draw [ultra thick] (0,0) -- (2,0) -- (2,2) -- (4,2) -- (4,0) -- (6,0) -- (6,2);
\end{scope}

\begin{scope}[xshift=38cm,yshift=5cm]
    \draw [ultra thick] (0,0) -- (2,0) -- (2,2);
    \draw [ultra thick,color=red] (1.946,2) -- (4,2) -- (4,0) -- (6,0) -- (6,-2);
\end{scope}

\node at (41,15) {$k=6$};
\draw [dashed] (46,17) -- (46,-9);

\node at (50,11) {\gras{Line 1}};
\node at (50,5) {\gras{Line 2}};
\node at (50,0) {\gras{Line 3}};
\node at (50,-5) {\gras{Line 4}};
\end{tikzpicture}
    }
    \caption{Possible self-avoiding paths of length $k$ of $\Gamma_{\infty,R}^{\nn}(P)$ for $R<3/7$ using Lemma~\ref{lem:R<p/q}.}
    \label{fig:RminInfP1}
\end{figure}
\begin{figure}[ht!]
    \centering
    \scalebox{0.6}{
    \begin{tikzpicture}[scale=0.3]


\draw [ultra thick,color=red] (0,0) -- (2,0) -- (2,2) -- (4,2) -- (4,4);
\draw (5,2) -- (7,9);
\draw (5,2) -- (7,2);

\node at (2,22) {$k=4$};
\draw [dashed] (-2,24) -- (-2,-11);
\draw [dashed] (6,24) -- (6,-11);


\begin{scope}[xshift=8cm,yshift=7cm]
       \draw [ultra thick,color=black] (0,0) -- (2,0) -- (2,2) -- (4,2) -- (4,4) -- (6,4);
       \draw (7,2) -- (9,2);
       \draw (7,2) -- (9,9);
\end{scope}

\begin{scope}[xshift=9cm,yshift=0cm]
       \draw [ultra thick,color=black] (0,0) -- (2,0) -- (2,2) -- (4,2) -- (4,4) -- (2,4);
       \draw (6,2) -- (8,2);
\end{scope}

\node at (11,22) {$k=5$};
\draw [dashed] (16,24) -- (16,-11);


\begin{scope}[xshift=18cm,yshift=13cm]
       \draw [ultra thick,color=black] (0,0) -- (2,0) -- (2,2) -- (4,2) -- (4,4) -- (6,4) -- (6,6);
       \draw (7,3) -- (9,3);
\end{scope}

\begin{scope}[xshift=18cm,yshift=7cm]
       \draw [ultra thick,color=black] (0,0) -- (2,0) -- (2,2) -- (4,2) -- (4,4) -- (6,4) -- (6,2);
\end{scope}

\begin{scope}[xshift=19cm,yshift=-1cm]
       \draw [ultra thick,color=black] (0,0) -- (2,0) -- (2,2) -- (4,2) -- (4,4) -- (2,4) -- (2,6);
       \draw (6,3) -- (8,3);
       \draw (6,3) -- (8,-5);
\end{scope}

\node at (21,22) {$k=6$};
\draw [dashed] (26,24) -- (26,-11);

\begin{scope}[xshift=28cm,yshift=13cm]
       \draw [ultra thick,color=black] (0,0) -- (2,0) -- (2,2) -- (4,2) -- (4,4) -- (6,4) -- (6,6) -- (4,6);
\end{scope}

\begin{scope}[xshift=29cm,yshift=-1cm]
       \draw [ultra thick,color=black] (0,0) -- (2,0) -- (2,2) -- (4,2) -- (4,4) -- (2,4) -- (2,6) -- (0,6);
       \draw (6,3) -- (8,3);
\end{scope}

\begin{scope}[xshift=29cm,yshift=-9cm]
       \draw [ultra thick,color=black] (0,0) -- (2,0) -- (2,2) -- (4,2) -- (4,4) -- (2,4) -- (2,6) -- (4,6);
\end{scope}

\node at (31,22) {$k=7$};
\draw [dashed] (36,24) -- (36,-11);


\begin{scope}[xshift=39cm,yshift=-1cm]
       \draw [ultra thick,color=black] (0,0) -- (2,0) -- (2,2) -- (4,2) -- (4,4) -- (2,4) -- (2,6) -- (0,6) -- (0,4);
       \draw (5,3) -- (7,3);
       
\end{scope}

\node at (41,22) {$k=8$};
\draw [dashed] (45,24) -- (45,-11);

\begin{scope}[xshift=49cm,yshift=-1cm]
       \draw [ultra thick,color=black] (0,0) -- (2,0) -- (2,2) -- (4,2) -- (4,4) -- (2,4) -- (2,6) -- (0,6) -- (0,4) --(-2,4);
       \draw (5,3) -- (7,3);
       \draw (5,3) -- (7,-5);
\end{scope}

\node at (50,22) {$k=9$};
\draw [dashed] (55,24) -- (55,-11);

\begin{scope}[xshift=59cm,yshift=-1cm]
       \draw [ultra thick,color=black] (0,0) -- (2,0) -- (2,2) -- (4,2) -- (4,4) -- (2,4) -- (2,6) -- (0,6) -- (0,4) -- (-2,4) -- (-2,2);
       \draw (5,3) -- (7,3);
\end{scope}

\begin{scope}[xshift=59cm,yshift=-9cm]
       \draw [ultra thick,color=black] (0,0) -- (2,0) -- (2,2) -- (4,2) -- (4,4) -- (2,4) -- (2,6) -- (0,6) -- (0,4) -- (-2,4) -- (-2,6);
\end{scope}

\node at (60,22) {$k=10$};
\draw [dashed] (65,24) -- (65,-11);

\begin{scope}[xshift=69cm,yshift=-1cm]
       \draw [ultra thick,color=black] (0,0) -- (2,0) -- (2,2) -- (4,2) -- (4,4) -- (2,4) -- (2,6) -- (0,6) -- (0,4) -- (-2,4) -- (-2,2) -- (0,2);
\end{scope}

\node at (70,22) {$k=11$};
\draw [dashed] (75,24) -- (75,-11);

\node at (79,16) {\gras{Line 1}};
\node at (79,9) {\gras{Line 2}};
\node at (79,2) {\gras{Line 3}};
\node at (79,-6) {\gras{Line 4}};

\end{tikzpicture}
    }
    \caption{Possible self-avoiding paths of length $k$ of $\Gamma_{\infty,R}^{\nn}(P)$ for $R<3/7$ when $(s_1,s_2,s_3,s_4)=(\ra,\ua,\ra,\ua)$.}
    \label{fig:RminInfP2}
\end{figure}
\subsubsection{Complete neighbor model and $p\in[1,\infty)$}\label{sec:LBR_MinCN}
For $p\in[1,\infty)$, the proof requires more subtle arguments: let $R < \displaystyle\min \left\{\frac{1}{2},\frac{2}{1+2^{2-\frac{1}{p}}}\right\}$ and let $u=(u_k)_{k\geq0}$ be an infinite self-avoiding path of $\Gamma_{p,R}^{\cn}(P)$ for some $P\in\mathcal{P}$. We construct a subsequence $(u_{a_k})_{k\geq1}$, and we study the distance $A_k$ from the point $P_{u_{a_k}}$ to a certain border of the cell to which it belongs. We prove that $(A_k)_{k\geq0}$ decreases at least linearly and becomes negative, which leads to a contradiction. This implies that the sequence $u$ must be finite.

\paragraph{Notations.}
Let us denote by $\Diag=\{\swa,\nwa,\sea,\nea\}$ the set of diagonal edges. We suppose that $u$ is a self-avoiding path of $\Gamma_{p,R}^{\cn}(P)$, and we define the sequence $(a_k)_{k\geq1}$ by 
\begin{itemize}
	\item $a_1 = \left \{
	\begin{array}{ll}
		1 \mbox{ if } s_2\notin \Diag,\\
		2 \mbox{ otherwise.} 
	\end{array}
	\right.$\\
	\item for $k\geq1$, $a_{k+1} = \left \{
	\begin{array}{ll}
		a_k + 2 \mbox{ if } \|u_{a_k+5}-u_{a_k}\|_{\infty} \neq 3,\\
		a_k + 3 \mbox{ otherwise.} 
	\end{array}
	\right.
	$
\end{itemize}

From the sequence $(a_k)_{k \geq 1}$, we define, for any $k \geq 1$, 
$$\delta_{k} = \left \{
	\begin{array}{ll}
		s_{a_{k} +2} &\mbox{ if } s_{a_{k} +2}\notin \Diag,\\						
		s_{a_{k} +1} + s_{a_{k} +2} &\mbox{ otherwise.} 
	\end{array}
	\right.
$$

One can prove, using Lemma~\ref{lem:R<p/q}(i), that, for any $k \geq 1$, $\delta_k \in \{\la,\ra,\da,\ua\}$.

Moreover, for any $k \geq 1$, we define
\begin{equation} \label{eq:Ak}
A_k = \left\{ 
	\begin{array}{ll}
		X_{u_{a_k}}-x_{u_{a_k}} &\mbox { if } \delta_k = \ra,\\
		1+x_{u_{a_k}}-X_{u_{a_k}} &\mbox { if } \delta_k = \la,\\
		Y_{u_{a_k}}-y_{u_{a_k}} &\mbox { if } \delta_k = \ua,\\
		1+y_{u_{a_k}}-Y_{u_{a_k}} &\mbox { if } \delta_k = \da.
	\end{array}
	\right.
\end{equation}

The quantity $A_k$ represents the distance between $P_{u_{a_k}}$ and the border of its cell specified by $\delta_k$. More precisely, if $\delta_k=\ra$, $A_k$ is the distance between $P_{u_{a_k}}$ and the left border of its cell, while if $\delta_k=\da$, it is the distance with the top border, and similarly for the other possible directions, see Figure~\ref{fig:a_k&delta_k} for an illustration.\par

\medskip

Our aim is to control the increment $A_{k+1}-A_k$. To this end, we first look at the possible patterns we can observe between the points $u_{a_k}$ and $u_{a_{k+1}}$.

\begin{figure}[ht!]
    \begin{minipage}{.4\textwidth}
    \centering
    \begin{tikzpicture}
	\node[fill=black,inner sep=2pt,shape=circle] at (1,0){};
	\draw (0,0) -- (1,0);
	\node[fill=black,inner sep=2pt,shape=circle] at (1.0,1.0){};
	\draw (1,0) -- (1.0,1.0);
	\node[fill=black,inner sep=2pt,shape=circle] at (2.0,0.0){};
	\draw (1.0,1.0) -- (2.0,0.0);
	\node[fill=black,inner sep=2pt,shape=circle] at (2.0,1.0){};
	\draw (2.0,0.0) -- (2.0,1.0);
	\node[fill=black,inner sep=2pt,shape=circle] at (3.0,0.0){};
	\draw (2.0,1.0) -- (3.0,0.0);
	\node[fill=black,inner sep=2pt,shape=circle] at (3,1){};
	\draw (3.0,0.0) -- (3.0,1.0);
	\node[fill=black,inner sep=2pt,shape=circle] at (4,1){};
	\draw (3.0,1.0) -- (4.0,1.0);
	\node[fill=black,inner sep=2pt,shape=circle] at (3,2){};
	\draw (4.0,1.0) -- (3.0,2.0);
	\node[fill=black,inner sep=2pt,shape=circle] at (4,2){};
	\draw (3.0,2.0) -- (4.0,2.0);
	\node[fill=black,inner sep=2pt,shape=circle] at (4,3){};
	\draw (4.0,2.0) -- (4.0,3.0);
	\node[fill=black,inner sep=2pt,shape=circle] at (5,3){};
	\draw (4.0,3.0) -- (5.0,3.0);
	\node[fill=black,inner sep=2pt,shape=circle] at (5,4){};
	\draw (5.0,3.0) -- (5.0,4.0);
	\node[fill=black,inner sep=2pt,shape=circle] at (6,3){};
	\draw (5.0,4.0) -- (6.0,3.0);
	\node[fill=black,inner sep=2pt,shape=circle] at (6,4){};
	\draw (6.0,3.0) -- (6.0,4.0);
	\node[fill=black,inner sep=2pt,shape=circle] at (7,4){};
	\draw (6.0,4.0) -- (7.0,4.0);
	\node[fill=black,inner sep=2pt,shape=circle] at (6,5){};
	\draw (7.0,4.0) -- (6.0,5.0);
	\node[draw=black,fill=black,inner sep=2pt,shape=circle,label={below :$u_0$}] at (0,0){};
	\node [draw=black,fill=red,inner sep=2pt,shape=circle,label={below :{\color{red}$\ra$}}] at (1,0) {};
	\node [draw=black,fill=red,inner sep=2pt,shape=circle,label={below :{\color{red}$\ra$}}] at (2,0){};
	\node [draw=black,fill=red,inner sep=2pt,shape=circle,label={below :{\color{red}$\ra$}}] at (3,0){};
	\node [draw=black,fill=red,inner sep=2pt,shape=circle,label={left :{\color{red}$\ua$}}] at (3,2){};
	\node [draw=black,fill=red,inner sep=2pt,shape=circle,label={below :{\color{red}$\ra$}}] at (5,3){};
	\node [draw=black,fill=red,inner sep=2pt,shape=circle,label={below :{\color{red}$\ra$}}] at (6,3){};
    \end{tikzpicture}
    \end{minipage}
    \hspace{1cm}
    \begin{minipage}{.4\textwidth}
    \scalebox{1.1}{
    \begin{tikzpicture}[>={Latex[length=2mm]}]
	\draw[step=1cm,color=gray,ultra thin] (-.4,-.4) grid (6.4,4.4);
	\draw [pattern color=orange,pattern=north east lines] (1.47,1) rectangle (2,0);
	\draw[-Classical TikZ Rightarrow,color=red] (1,.5) -- (1.47,.5);
	\draw [pattern color=orange,pattern=north east lines] (2.44,1) rectangle (3,0);
	\draw[-Classical TikZ Rightarrow,color=red] (2,.5) -- (2.44,.5);
	\draw [pattern color=orange,pattern=north east lines] (3.4,1) rectangle (4,0);
	\draw[-Classical TikZ Rightarrow,color=red] (3,.5) -- (3.4,.5);
	\draw [pattern color=orange,pattern=north east lines] (4,2.229) rectangle (3,3);
	\draw[-Classical TikZ Rightarrow,color=red] (3.5,2) -- (3.5,2.229);
	\draw [pattern color=orange,pattern=north east lines] (5.0519,3) rectangle (6,4);
	\draw[-Classical TikZ Rightarrow,color=red] (5,3.5) -- (5.0519,3.5);
	\draw[->] (1,1) -- (0.7,0.6);
	\node[shape=circle,inner sep=2pt,fill=black,draw=black] (A) at (1,1) {};
	\draw[->] (1.47,1) -- (1.47,0.6);
	\node[shape=circle,inner sep=2pt,fill=red,draw=black] (B) at (1.48,1) {};
	\draw[->] (1.96,1) -- (1.96,1.4);	
	\node[shape=circle,inner sep=2pt,fill=black] (C) at (1.96,1) {};
	\draw[->] (2.44,1) -- (2.44,0.6);
	\node[shape=circle,inner sep=2pt,fill=red,draw=black] (D) at (2.44,1) {};
	\draw[->] (2.92,1) -- (2.92,1.4);
	\node[shape=circle,inner sep=2pt,fill=black] (E) at (2.92,1) {};
	\draw[->] (3.4,1) -- (3.4,.6);
	\node[shape=circle,inner sep=2pt,fill=red,draw=black] (F) at (3.4,1) {};
	
	\node[shape=circle,inner sep=2pt,fill=black] (G) at (3.7,1.374) {};
	
	\draw[->] (4,1.749) -- (4.4,1.749);
	\node[shape=circle,inner sep=2pt,fill=black,draw=black] (H) at (4,1.749) {};
	\draw[->] (4,2.229) -- (3.6,2.229);
	\node[shape=circle,inner sep=2pt,fill=red,draw=black] (I) at (4,2.229) {};
	
	\node[shape=circle,inner sep=2pt,fill=black,draw=black] (J) at (4.2859,2.6145) {};
	
	\draw[->] (4.5719,3) -- (4.5719,3.4);
	\node[shape=circle,inner sep=2pt,fill=black,draw=black] (K) at (4.5719,3) {};
	\draw[->] (5.0519,3) -- (5.0519,3.4);
	\node[shape=circle,inner sep=2pt,fill=red,draw=black] (L) at (5.0519,3) {};
	\draw (A) -- (B) -- (C) -- (D) -- (E) -- (F) -- (G) -- (H) -- (I) -- (J) -- (K) -- (L);
	
    \end{tikzpicture}
    }
    \end{minipage}    
    
    \caption{On the left, a self-avoiding path in $\ZZ^2$ with steps in $\step$. The red dot represents $u_0$, and the blue dots the sequence $(u_{a_k})_{k\geq1}$. Next to these points are the arrows given by the sequence $(\delta_k)_{k\geq1}$. On the right, we construct for $(R,p)=(0.48,2)$ a configuration of points $P$ realising the beginning of $u$, for which each $A_k$ is maximized, represented by the red arrows. It appears that it is not possible to place $P_{u_{a_6}}$. The orange hatched rectangles represent the forbidden domains for $(P_{u_{a_k}})_{k\geq1}$.}
    \label{fig:a_k&delta_k}
\end{figure}

\paragraph{List of patterns between $u_{a_k}$ and $u_{a_{k+1}}$.}

Let $k\geq1$.
We identify all the possible patterns between $u_{a_k}$ and $u_{a_{k+1}}$, and control the value of $A_{k+1}-A_k$ accordingly. 

In all that follows, we use implicitly the self-avoiding property and Lemma~\ref{lem:R<p/q}(i).

Without loss of generality, we only consider the two cases $s_{a_{k}+1}=\ra$ and $s_{a_{k}+1} = \nea$. Let us first treat the case $s_{a_{k}+1} = \ra$. By symmetry, we can assume that $s_{a_k}\in\{\nwa,\ua\}$. 
If $s_{a_k}=\ua$, we have $s_{a_k+2}\in\{\nwa,\da,\ua\}$, and if $s_{a_k}=\nwa$, then $s_{a_k+2}\in\{\swa,\nwa,\ua\}$.

Figure~\ref{fig:PatternP} displays all of these possible scenarios. In the following, we use a letter together with one or several numbers to refer to the corresponding cells on Figure~\ref{fig:PatternP}.

Le us start by examining the possibilities described on the left table of Figure~\ref{fig:PatternP}, which correspond to the case where $s_{a_k+2}\in\{\nwa,\ua\}$.

%

\begin{itemize}[label=$\bullet$]
	\item \gras{(A.2,3,4)} Case $s_{a_k+2} = \nwa$. Then $s_{a_k+3}=\ra$. Since we have $\|u_{a_{k}+3}-u_{a_k}\|_{\infty} = 1$ and $\|u_{l+2}-u_{l}\|_{\infty}=1$ for any $l\geq1$, we obtain that $\|u_{a_{k}+5}-u_{a_k}\|_{\infty}\leq 2$. So $a_{k+1}=a_k+2$.
	\item \gras{(B.2)} Case $s_{a_k+2} = \ua$. Then $s_{a_k+3}\in\{\la,\sea,\ra\}$.
	\begin{itemize}[label=$\bullet$]
		\item \gras{(B.3,4)} If $s_{a_k+3}=\la$, we have $\|u_{a_{k}+3}-u_{a_k}\|_{\infty} = 1$. So $a_{k+1}=a_k+2$.
		\item \gras{(C.3,4,5)} If $s_{a_k+3}=\sea$, then $s_{a_k+4}=\ua$. It implies that $s_{a_k+5}\in\{\sea,\ra\}$. So $\|s_{a_{k}+5}-s_{a_k}\|_\infty=3$ and $a_{k+1}=a_k+3$.
		\item \gras{(D.3)} If $s_{a_k+3}=\ra$, then $s_{a_k+4}\in\{\nwa,\da,\ua\}$.
		\begin{itemize}[label=$\bullet$]
			\item \gras{(D.4,5)} If $s_{a_k+4} = \nwa$, then $s_{a_k+5} = \ra$. We have $\|s_{a_{k}+5}-s_{a_k}\|_\infty=2$, so $a_{k+1}=a_k+2$.
			\item \gras{(E.4)} If $s_{a_k+4} = \ua$, then $s_{a_k+5} \in\{\la,\sea,\ra\}$.
			\begin{itemize}[label=$\bullet$]
				\item \gras{(E.5)} If $s_{a_k+5}\in\{\sea,\ra\}$, then $\|s_{a_{k}+5}-s_{a_k}\|_{\infty}=3$, so we have $a_{k+1}=a_k+3$.
				\item \gras{(F.5)} If $s_{a_k+5}=\la$, then $\|s_{a_{k}+5}-s_{a_k}\|_{\infty}=2$, so we have $a_{k+1}=a_k+2$.
			\end{itemize}
			\item \gras{(G.4,5)} If $s_{a_k+4} = \da$, then $s_{a_k+5} \in\{\ra,\nea\}$ and we have $\|s_{a_{k}+5}-s_{a_k}\|_{\infty}=3$, so we have $a_{k+1}=a_k+3$.
			\end{itemize}
		\end{itemize}
\end{itemize}

For the study of the case $(s_{a_k},s_{a_k+2})\in\left\{\left(\nwa,\swa\right),\left(\ua,\da\right)\right\}$, we refer to the right table of Figure~\ref{fig:PatternP}. 
Let us only mention that if $(s_{a_k},s_{a_k+1},s_{a_k+2}) = (\nwa,\ra,\swa)$, which corresponds to cell (A'.2), the path cannot be extended, so that we can exclude this case.

\begin{figure}[ht!]
	\centering
	\scalebox{.3}{%
	\input{R_min/Schema_norme_p}
	}
	\caption{List of possible patterns between the points $u_{a_k}$ and $u_{a_{k+1}}$, which are represented by the nodes.}
	\label{fig:PatternP}
\end{figure}

The above analysis shows that if $s_{a_k+1}\notin \Diag$ then $s_{a_{k+1}+1}\notin \Diag$. Furthermore, by Lemma~\ref{lem:R<p/q}(i), if $s_2 \in \Diag$, then $s_3 \notin \Diag$. Therefore, in all cases, we have $s_{{a_1}+1}\notin \Diag$.  By induction, it follows that for all $k\geq1$, $s_{a_{k}+1}\notin \Diag$. Hence, we can exclude the case $s_{a_{k}+1}= \nea$.

Furthermore, the observation of the two tables of Figure~\ref{fig:PatternP} shows that, whatever the pattern to which $s_{a_k}$ belongs, the evolution from $s_{a_{k+1}}$ is then given by the left table (see the possible steps after $s_{a_{k+1}}$ in columns $(4)$ and $(5)$ of the two tables). By induction, this means that for $k\geq2$, up to a rotation or symmetry, we always have \begin{equation}\label{eq:uShape}
    (s_{a_{k+1}},s_{a_{k+1}+1},s_{a_{k+1}+2}) \in\{\nwa,\ua\}\times\{\ra\}\times\{\nwa,\ua\}. 
\end{equation}

Moreover, for $k\geq2$,
\begin{itemize}
    \item if $a_{k+1} - a_k = 2$, then $y_{u_{a_{k+1}}}-y_{u_{a_k}}=1$ and $\delta_{k+1}=\delta_k=\ua$, see cells (A.4), (B.4), (D.5), (F.5) of Figure~\ref{fig:PatternP};
    \item if $a_{k+1} - a_k = 3$, then $s_{a_k+2} = \ua$,  $s_{a_{k}+3}\in\{\nwa,\ua\}$ and $\delta_{k+1}=\ra$, see cells (C.5), (E.5), (G.5) of Figure~\ref{fig:PatternP}.
\end{itemize}

To conclude the proof, let us now show that there exists a constant $c>0$ such that for all $k\geq2$, we have
$$A_{k+1} - A_k\leq-\min\{1-2R,c\}<0.$$

For that, we consider separately the cases $a_{k+1}-a_k=2$ and $a_{k+1}-a_k=3$. As before, we assume without loss of generality that equation \eqref{eq:uShape} is satisfied.

\paragraph{Case \boldmath$a_{k+1}-a_k=2$.} From the above, we have $y_{u_{a_{k+1}}}-y_{u_{a_k}}=1$ and $\delta_{k+1}=\delta_k=\ua$. Thus,  
$$A_{k+1} - A_k = (Y_{u_{a_{k+1}}}-y_{u_{a_{k+1}}}) - (Y_{u_{a_k}}-y_{u_{a_k}}) = Y_{u_{a_{k+1}}} - Y_{u_{a_k}} -1.$$
Since $a_{k+1}=a_k+2$, it follows that $Y_{u_{a_{k+1}}}\leq  Y_{u_{a_k}} + 2R$. So,
$$A_{k+1} - A_k \leq -(1-2R)<0.$$

\paragraph{Case \boldmath$a_{k+1}-a_k = 3$.}
As in Section~\ref{sec:UBR_min}, we set  $\displaystyle f(p) = \frac{2}{1+2^{2-\frac{1}{p}}}$.\vspace{0.5cm}

From the above, we have   $(s_{a_k},s_{a_k+1},s_{a_k+2},s_{a_k+3})\in\{\nwa,\ua\}\times\{\ra\}\times\{\ua\}\times\{\sea,\ra\}$, see Figure~\ref{fig:CN3}. 

\begin{figure}[ht!]
    \centering
    \begin{tikzpicture}
        \draw[ultra thick] (0,0) -- (1,0) -- (1,1);
        \draw[ultra thick, dashed] (0,-1) -- (0,0);
        \draw[ultra thick, dashed] (1,-1) -- (0,0);
        \draw[ultra thick, dashed] (1,1) -- (2,1);
        \draw[ultra thick, dashed] (1,1) -- (2,0);
        \node[shape=circle, fill=red, inner sep=3pt,label={left :$u_{a_k}$}] at (0,0) {}; 
    \end{tikzpicture}
    \caption{Pattern observed when $a_{k+1} - a_k = 3$.}
    \label{fig:CN3}
\end{figure}

We assume without loss of generality that $u_{a_k} = (0,0)$, which implies that $u_{a_k+2} = (1,1)$, $A_{k+1} = X_{u_{a_{k+1}}}-2$ and $A_k = Y_{0,0}$. Since $u$ is a path of $\Gamma_{p,R}^{\cn}(P)$, we have 
$$\|P_{1,1} - P_{0,0}\|_p^p = (X_{1,1}-X_{0,0})^p + (Y_{1,1}-Y_{0,0})^p \leq (2R)^p.$$

Using $X_{0,0}\in[0,1]$ and $Y_{1,1}\in[1,2]$, we obtain
$$(X_{1,1}-1)^p + (1-Y_{0,0})^p \leq (2R)^p.$$

It follows that
\begin{equation}\label{eq:StairsIneq}
    X_{1,1} \leq 1 + \big[(2R)^p - (1-Y_{0,0})^p\big]^{\frac{1}{p}}. 
\end{equation}

Since $X_{u_{a_{k+1}}}\leq X_{1,1} + R$, it implies that
\begin{align*}
    A_{k+1}-A_k & \leq X_{1,1}+R-2-Y_{0,0} \\
    & \leq \big[\big(2R\big)^p - (1-Y_{0,0})^p\big]^{\frac{1}{p}} +R-1-Y_{0,0} \\ 
    & < \big[\big(2f(p)\big)^p - (1-Y_{0,0})^p\big]^{\frac{1}{p}} +f(p)-1-Y_{0,0} \text{ (because $R< f(p)$).}
\end{align*}

Observe that $Y_{0,0}\in[0,R]$ because $s_{a_k} \in \{\nwa,\ua\}$. To conclude, we show that for all $x\in[0,R]$, the following inequality holds:
$$(2f(p))^p\leq (1+x-f(p))^p + (1-x)^p.$$

To this purpose, we introduce the function $g(x) = (1+x-f(p))^p + (1-x)^p$, for $x\in[0,R]$. By studying its derivative, we obtain that $g$ admits a minimum in $f(p)/2$. Thus, for all $x\in[0,R]$,
$$g(x)\geq g\bigg(\frac{f(p)}{2}\bigg) =2\bigg(1-\frac{f(p)}{2}\bigg)^p=(2f(p))^p.$$

As a consequence, we obtain that, for any $Y_{0,0} \in [0,R]$,
$$ \big[\big(2R\big)^p - (1-Y_{0,0})^p\big]^{\frac{1}{p}} +R-1-Y_{0,0} < 0.$$
Since $[0,R]$ is compact, there exists $c>0$ such that $A_{k+1}-A_k \leq -c.$

\paragraph{Conclusion:}
For any $k\geq 2$, we have $A_{k+1}-A_k \leq  -\min\{1-2R,c\} < 0$. 
Since $(A_k)_{k\geq 0}$ is a sequence of positive real number, this leads to a contradiction. 
Consequently, for $R<\min\left\{\frac{1}{2},\frac{2}{1+2^{2-\frac{1}{p}}}\right\}$, the graph $\Gamma_{p,R}^{\cn}(P)$ does not contain any infinite self-avoiding path. 
\subsubsection{Nearest neighbor model and $p\in[1,\infty)$}
Let $p\in\big[1,\infty\big)$, $R< \min\{1/2,R(p)\}$ and let $u=(u_k)_{k\geq0}$ be an infinite self-avoiding path of $\Gamma_{p,R}^{\nn}(P)$ for some $P\in\mathcal{P}$. We now have  $s_k=u_k-u_{k-1}\in\{\la,\da,\ua,\ra\}$ for $k\geq1$.

\paragraph{Staircase pattern.}
We introduce the set $\Stairs=\{n\geq1 : s_n=s_{n+2} \mbox{ and } s_{n+1} = s_{n+3}\}$, which contains positions at which the path presents a staircase pattern of length at least 4. We first prove the following.

\begin{lemma} \label{lem:StairsFinite}
    The set $\Stairs$ is infinite.
\end{lemma}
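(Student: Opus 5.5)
The plan is to argue by contradiction: assume $\Stairs$ is finite, so there is an $N$ with $n\notin\Stairs$ for all $n\geq N$. The only tools needed are Lemma~\ref{lem:R<p/q}(i), self-avoidance, and one linear-drift estimate. Everything reduces to a combinatorial analysis of ``flip indicators'' along the path.

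\textbf{Step 1 (alternation).} Since $R<1/2$, Lemma~\ref{lem:R<p/q}(i) with $(a,b)=(1,2)$ forbids two consecutive equal steps. Self-avoidance forbids a step followed by its opposite. So in the nearest neighbor model, horizontal and vertical steps alternate along $u$. For each $n\geq 1$, put $\varepsilon_n=1$ if $s_{n+2}=-s_n$ and $\varepsilon_n=0$ if $s_{n+2}=s_n$. This is well defined, because $s_n$ and $s_{n+2}$ are both horizontal or both vertical, hence equal or opposite. By definition, $n\in\Stairs$ if and only if $\varepsilon_n=\varepsilon_{n+1}=0$.

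\textbf{Step 2 (no two consecutive flips).} If $\varepsilon_n=\varepsilon_{n+1}=1$, then $(s_n,s_{n+1},s_{n+2},s_{n+3})=(s_n,s_{n+1},-s_n,-s_{n+1})$. The four steps then sum to zero, so $u_{n+3}=u_{n-1}$, contradicting self-avoidance. Combined with the hypothesis that $\Stairs\cap[N,\infty)=\emptyset$, we see that $(\varepsilon_n)_{n\geq N}$ has no two consecutive $0$s and no two consecutive $1$s. It therefore alternates: $\varepsilon_n=\varepsilon_{n+2}\neq\varepsilon_{n+1}$ for all $n\geq N$.

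\textbf{Step 3 (forced shape and drift).} Alternation of $\varepsilon$ means that, from index $N$ on, the steps of one orientation all have the same sign, while the steps of the other orientation flip sign each time. Up to symmetry, this is the snake $\ra,\ua,\la,\ua,\ra,\ua,\dots$. In particular, one coordinate of $u$ increases by exactly $1$ every two steps, say $y_{u_{N+2k}}-y_{u_N}\geq k-1$. On the other hand, consecutive points satisfy $|Y_{u_{l+1}}-Y_{u_l}|\leq \|P_{u_{l+1}}-P_{u_l}\|_p\leq R$. Since the points lie in their cells, $y_{u_l}\leq Y_{u_l}\leq y_{u_l}+1$. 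This gives
$$k-1\leq y_{u_{N+2k}}-y_{u_N}\leq Y_{u_{N+2k}}-Y_{u_N}+1\leq 2kR+1,$$
which is false for $k$ large, because $2R<1$. This contradiction shows that $\Stairs$ is infinite.

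\textbf{Main obstacle.} The delicate point is the bookkeeping in Steps 1--2: checking that membership in $\Stairs$ is exactly the pair condition $\varepsilon_n=\varepsilon_{n+1}=0$, and that the square-closing argument applies to every consecutive pair of indicators, whether the window starts with a horizontal or a vertical step. Both cases are covered by the same computation, since it never uses the orientation of $s_n$. Once the indicators are forced to alternate, the drift estimate in Step 3 is routine.
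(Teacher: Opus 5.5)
Your proof is correct and follows essentially the same route as the paper. You assume $\Stairs$ finite, use self-avoidance, Lemma~\ref{lem:R<p/q}(i) and the absence of staircases to force the snake shape, and then contradict $R<1/2$ through the linear drift of one coordinate, which is the paper's decreasing quantity $D_k$. Your flip indicators $\varepsilon_n$ are a cleaner, fully explicit version of the paper's ``by induction'' step; the paper instead normalizes $(s_1,s_2)=(\ra,\ua)$ and splits on $s_3$.
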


\begin{proof}
    Let us assume by contradiction that $\Stairs$ is finite, and let $M=\max\Stairs$, with the convention that $M=0$ if $\Stairs=\emptyset$. Even if it means replacing $u$ by $(u_k)_{k\geq M}$, we can assume without loss of generality that $\Stairs=\emptyset$.
    Up to a rotation and a symmetry, we may also assume that $(s_1,s_2)=\left(\ra,\ua\right)$, by Lemma~\ref{lem:R<p/q}(i). We have then $s_3\in\{\la,\ra\}$. 
    \begin{itemize}
        \item  If $s_3=\la$, the self-avoiding property and Lemma~\ref{lem:R<p/q}(i) imply that $s_4=\ua$. Then, since $\Stairs=\emptyset$, we have $s_5=\ra$, and by induction, we obtain that $s_{2k+2}=\ua$ and $(s_{4k+1},s_{4k+3})=(\ra,\la)$ for all $k\geq 0$. Let us set $D_k = Y_{u_{2k}} - y_{u_{2k}}$, for $k\geq 0$. This quantity represents the distance of the points $P_{u_{2k}} = (X_{u_{2k}},Y_{u_{2k}})$ to the bottom border of their cells. We have $Y_{u_{2k}+2}\leq Y_{u_{2k}} +2R$ and $y_{u_{2k}+2} = y_{u_{2k}} +1$, so that $D_{k+1}-D_k \leq -(1-2R)<0$. Since $D_k>0$ for all $k\geq 0$, this leads to a contradiction. 
        \item  If $s_3=\ra$, we obtain analogously that $s_{2k+1}=\ra$ and $(s_{4k+2},s_{4k+4})=(\ua,\da)$ for all $k\geq 0$, and we conclude in the same way with $D_k = X_{u_{2k}} - x_{u_{2k}}$. \qedhere
	\end{itemize}
\end{proof}

\paragraph{Notations.}
The rest of the proof follows the same approach as in Section~\ref{sec:LBR_MinCN}. 
Using the fact that $\Stairs$ is infinite, we define a sequence $(a_k)_{k\geq1}$, with the help of two additional sequences $(n_k)_{k\geq1}$ and $(b_k)_{k\geq1}$.

The sequence $(n_k)_{k\geq1}$ is defined by
\begin{itemize}
	\item $n_1 = \min \{n\geq 1 : n\in \Stairs\}$,
	\item for $k\geq1$, 
	$n_{k+1} = \min \{n\geq n_k+2 : n\in \Stairs\}$,
\end{itemize}
and the sequence $(b_k)_{k\geq1}$ by 
\begin{itemize}
	\item $b_1 = 1$,
	\item for $k\geq1$, 
	\begin{equation}\label{eq:b_k} 
	b_{k+1} = \left\{
	\begin{array}{ll}
		b_k +1 &\mbox { if } n_{b_k+1} - n_{b_k} \neq 3,\\
		b_k +2 &\mbox { if } n_{b_k+1} - n_{b_k} = 3 \mbox{ and } n_{b_k+2} - n_{b_k+1} \neq 3,\\
		b_k +3 &\mbox { if } n_{b_k+1} - n_{b_k} = 3 \mbox{ and } n_{b_k+2} - n_{b_k+1} = 3.\\
	\end{array}
	\right.
	\end{equation}
\end{itemize}

For $k \geq 1$, we set $a_k=n_{b_k}$, $\delta_k = s_{a_k}$, and we then define $A_k$ as in Equation~\eqref{eq:Ak}. 

\paragraph{List of patterns between $u_{n_k}$ and $u_{n_{k+1}}$.}

Like in Section~\ref{sec:LBR_MinCN}, we first describe the patterns observed between the points $u_{n_k}$ and $u_{n_{k+1}}$, and then give an upper bound for $A_{k+1}-A_{k}$ in all the possible cases.

\begin{lemma}\label{lem:Description}
    Let $k\geq1$ be such that $s_{n_k}= s_{n_k+2}= \ra$ and $s_{n_k+1}= s_{n_k+3} = \ua$. Then, 
\begin{itemize}
    \item if $n_{k+1} - n_k = 2l$ with $l\geq1$, then between the points $u_{n_{k}+1}$ and $u_{n_{k+1}+1}$, the path alternates between the patterns  $(\ra,\ua)$ and $(\ra,\da)$, i.e.,
    $$\forall m\in\{2,\ldots, 2l+1\}, \quad s_{n_k + m} =
    \begin{cases}
        \ra & \text{if } m = 0 \mod 2,\\
        \da & \text{if } m = 1 \mod 4,\\
        \ua & \text{if } m =3 \mod 4.
    \end{cases}
    $$
    \item if $n_{k+1} - n_k = 2l+1$ with $l\geq1$, then between the points $u_{n_{k}+1}$ and $u_{n_{k+1}+2}$, the paths alternates between the patterns $(\ra,\ua)$ and $(\la,\ua)$, i.e., 
    $$\forall m\in\{2,\ldots,2l+2\}, \quad s_{n_k + m} = 
    \begin{cases}
        \ua & \text{if } m  = 1 \mod 2,\\
        \la & \text{if } m  = 0 \mod 4,\\
        \ra & \text{if } m  = 2 \mod 4.
    \end{cases}
    $$
\end{itemize}
	In particular, for any $k \geq 1$,
	$$s_{n_{k+1}} = \left\{
	\begin{array}{ll}
		\ra &\mbox{ if } n_{k+1} - n_k \mbox{ is even},\\
		\ua &\mbox{ if } n_{k+1} - n_k \mbox{ is odd}.  
	\end{array}
	\right.$$
\end{lemma}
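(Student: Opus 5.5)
We prove Lemma~\ref{lem:Description} by a step-by-step forcing argument along the path. It uses only three facts, all already available.
\begin{enumerate}[label=(\alph*)]
    \item Horizontal and vertical steps alternate, by Lemma~\ref{lem:R<p/q}(i), since $R<1/2$ and $s_n\in\{\la,\da,\ua,\ra\}$.
    \item No step is immediately reversed, since a reversal would give $u_{n+2}=u_n$, contradicting self-avoidance.
    \item The definition of $n_{k+1}$: it is the first element of $\Stairs$ that is at least $n_k+2$.
\end{enumerate}
From (a) and (b), two steps $s_n,s_{n+2}$ in the same direction class are either equal or opposite. Recall also that $n\in\Stairs$ means exactly that $s_n=s_{n+2}$ and $s_{n+1}=s_{n+3}$.

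\textbf{Setting up.} We start from $s_{n_k}=s_{n_k+2}=\ra$ and $s_{n_k+1}=s_{n_k+3}=\ua$. We then determine $s_{n_k+4},s_{n_k+5},\dots$ inductively.

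\textbf{Position $n_k+2$ and the even case.} Suppose first that $n_k+2\in\Stairs$, so $s_{n_k+4}=\ra$. Then $n_{k+1}=n_k+2$, which is the even case with $l=1$; the formula only involves $m\in\{2,3\}$, and it says $s_{n_k+2}=\ra$, $s_{n_k+3}=\ua$, which holds.

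\textbf{Position $n_k+2$ and the odd case.} Otherwise $s_{n_k+4}=\la$, since it must be horizontal and not equal to $\ra$. Then $s_{n_k+5}$ is vertical and, by (b), cannot be $\da$, so $s_{n_k+5}=\ua$. This gives $s_{n_k+3}=s_{n_k+5}=\ua$, so whether $n_k+3\in\Stairs$ depends only on $s_{n_k+6}$. If $s_{n_k+6}=\la$, then $n_{k+1}=n_k+3$. If $s_{n_k+6}=\ra$ instead, the pattern continues to alternate as $(\ra,\ua),(\la,\ua),(\ra,\ua),\dots$, which is the odd case.

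\textbf{The even chain.} The even chain is started instead when the path from position $n_k+3$ on follows $\ra,\da,\ra,\ua,\dots$. I expect a short check to rule out $n_k+3\in\Stairs$ while the path is in the even mode: in that mode the steps at odd offsets stay $\ra$ and the vertical steps alternate.

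\textbf{General induction.} The general statement follows by induction on $m$. The hypothesis is that the pattern holds up to $m$ and that no index in $[n_k+2,n_k+m-2]$ lies in $\Stairs$.

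\emph{Even mode.} Here the vertical steps alternate $\da,\ua,\dots$. The next horizontal step must be $\ra$: a $\la$ would reverse the previous $\ra$ only if it were adjacent, so the argument uses (a) and (b) together with the requirement that $\ra$ steps are equal two apart. Each equality $s_n=s_{n+2}$ for horizontal $n$ creates a potential stairs point. Such a point is realised exactly when the following vertical step repeats rather than flips, and that index becomes $n_{k+1}$.

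\emph{Odd mode.} This is symmetric: the vertical $\ua$ steps are forced, since a $\da$ would reverse the $\ua$ immediately before it. The horizontal steps then either alternate or repeat, and the first repeat creates the stairs point $n_{k+1}$.

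\textbf{Consequence.} Evaluating the pattern at $m=n_{k+1}-n_k$ gives the last claim: $s_{n_{k+1}}=\ra$ in the even case and $s_{n_{k+1}}=\ua$ in the odd case.

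\textbf{Main obstacle.} The main difficulty is the bookkeeping that keeps the two modes from mixing. Once the path is alternating vertical directions with constant $\ra$, we must show it cannot switch to alternating horizontal directions with constant $\ua$ without first producing a stairs index. The point is that any switch forces two consecutive repetitions, $s_n=s_{n+2}$ and $s_{n+1}=s_{n+3}$, at the junction, which is exactly an element of $\Stairs$. That element then equals $n_{k+1}$ by minimality, so the switch can only happen at $n_{k+1}$ itself. The mod-4 indexing in the statement should then follow by tracking the parity at which each mode begins.
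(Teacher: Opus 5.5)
Your overall strategy (force the steps one at a time from $s_{n_k+4}$ on) is the right one, and it is what the paper's one-line proof refers to. The problem is that the facts (a)--(c) are too weak, and each forcing step you attribute to (b) misapplies it.

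\begin{itemize}
\item When $s_{n_k+3}=\ua$ and $s_{n_k+4}=\la$, the step $s_{n_k+5}=\da$ is not the reverse of the step immediately before it.
\item In the odd mode, the step immediately before a vertical step is horizontal, so ``a $\da$ would reverse the $\ua$ immediately before it'' is false.
\item In the even mode, your reason for the next horizontal step being $\ra$ is circular: it invokes ``the requirement that $\ra$ steps are equal two apart''.
\end{itemize}

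Reversing a step two places back is not forbidden in itself; the even pattern you want to prove contains $\ua,\ra,\da$. What excludes these steps is that the path cannot close a unit square: four consecutive steps cannot sum to zero, i.e.\ $u_{n+4}\neq u_n$. For instance, $\ra+\ua+\la+\da=0$ would give $u_{n_k+5}=u_{n_k+1}$.

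This is not cosmetic: with only (a), (b), (c) the lemma is false. Take the steps $\ra,\ua,\ra,\ua,\la,\da,\la,\da$ from $s_{n_k}$ to $s_{n_k+7}$. They satisfy (a) and (b), and give $n_k+2,n_k+3\notin\Stairs$ but $n_k+4\in\Stairs$. Hence $n_{k+1}-n_k=4$, yet $s_{n_{k+1}}=\la$, contradicting the lemma.

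Add the square-closure exclusion; together with Lemma~\ref{lem:R<p/q}(i), this is exactly the input used in the proof of Lemma~\ref{lem:StairsFinite}. The argument is then short, and your ``mode-switching'' obstacle disappears, because one class of steps is frozen until $n_{k+1}$.

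\begin{itemize}
\item If $s_{n_k+4}=\ra$: for a vertical step $v$, whenever $s_{n-3}=v$, $s_{n-2}=\ra$, $s_{n-1}=-v$, the step $s_n=\la$ would close a square. So all horizontal steps stay $\ra$ while the vertical ones alternate, until the first vertical repetition $s_{n_k+2l+1}=s_{n_k+2l+3}$ with $l\geq1$. This puts $n_k+2l$ in $\Stairs$, and no smaller index $\geq n_k+2$.
\item If $s_{n_k+4}=\la$: symmetrically, for a horizontal step $h$, $h+\ua-h+\da=0$ forces all vertical steps to stay $\ua$ while the horizontal ones alternate. This lasts until the first horizontal repetition $s_{n_k+2l+2}=s_{n_k+2l+4}$, which gives $n_{k+1}=n_k+2l+1$.
\end{itemize}

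There are also two smaller slips. First, ``Otherwise $s_{n_k+4}=\la$'' conflates $n_k+2\notin\Stairs$ with $s_{n_k+4}=\la$; it omits the case $(s_{n_k+4},s_{n_k+5})=(\ra,\da)$. Second, the check you only ``expect'', namely $n_k+3\notin\Stairs$ in that case, is immediate from $s_{n_k+3}=\ua\neq\da=s_{n_k+5}$.
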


\begin{proof}
We use the same ideas as in the proof of Lemma~\ref{lem:StairsFinite}. 
\end{proof}

Figure~\ref{fig:Description} represents the patterns between $u_{n_k}$ and $u_{n_{k+1}}$ for $n_{k+1} - n_k$ even (green path) and odd (blue path).

\begin{figure}
    \centering
    \begin{tikzpicture}
        \draw [ultra thick] (0,0) -- (1,0) -- (1,1) -- (2,1) -- (2,2);
        \draw [dashed,ultra thick,color=violet] (2,2) -- (1,2) -- (1,3) -- (2,3) -- (2,4) -- (1,4) -- (1,5);
        \draw [dashed,ultra thick,color=darkpastelgreen] (2,2) -- (3,2) -- (3,1) -- (4,1) -- (4,2) -- (5,2) -- (5,1);
        \node[fill=none,draw=none] at (3.5,0.5) {\color{darkpastelgreen}$n_{k+1}-n_k=2l$};
        \node[fill=none,draw=none] at (-.8,3) {\color{violet}$n_{k+1}-n_k=2l+1$};
        \node[inner sep=3pt,fill=blue,shape=circle,label={right :$u_{n_k}$}] at (1,0) {};
        
        \node[fill=none] at (5.5,1.5) {$\cdots$};
        \node[fill=none] at (1.5,5.5) {$\vdots$};
        
        \node[inner sep=2pt,fill=darkpastelgreen,shape=circle,] at (2,1) {};
        \node[inner sep=2pt,fill=darkpastelgreen,shape=circle,] at (3,2) {};
        \node[inner sep=2pt,fill=darkpastelgreen,shape=circle,] at (4,1) {};
        \node[inner sep=2pt,fill=darkpastelgreen,shape=circle,] at (5,2) {};

        \node[inner sep=2pt,fill=violet,shape=circle,] at (2,2) {};
        \node[inner sep=2pt,fill=violet,shape=circle,] at (1,3) {};
        \node[inner sep=2pt,fill=violet,shape=circle,] at (2,4) {};
        \node[inner sep=2pt,fill=violet,shape=circle,] at (1,5) {};

    \end{tikzpicture}
    \caption{Types of pattern observed between $u_{n_{k}}$ and $u_{n_{k+1}}$, depending on the parity of $n_{k+1}-n_k$. The purple and green dots represents the possible positions of $u_{n_{k+1}}$.}
    \label{fig:Description}
\end{figure}

\paragraph{Two important lemmas.}
For the rest of the proof, we use the two following lemmas.

\begin{lemma}\label{lem:Step5}
	Let $p\in[1,\infty)$ and $R< \min\{1/2,R(p)\}$. We consider the sequence $u=(u_k)_{0\leq k\leq6}$ defined by
	$$\begin{array}{llll}
	u_0 = (0,0), & u_1 = (1,0), & u_2 = (1,1), & u_3 = (2,1),\\
	u_4 = (2,2), & u_5 = (1,2), & u_6 = (1,3),&
	\end{array}
	$$
	see Figure~\ref{fig:Step5}. 
	Then, there exists $c_1>0$ such that for any $P\in\mathcal P$ such that $u$ is a path of $\Gamma_{p,R}^{\nn}(P)$, we have 
	\begin{equation}\label{eq:odd}
	(Y_{1,3} - 3) - (X_{1,0} - 1) \leq -c_1 <0.
	\end{equation}
\end{lemma}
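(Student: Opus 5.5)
The plan is a compactness argument plus a convexity estimate, in the same spirit as the treatment of the case $a_{k+1}-a_k=3$ in Section~\ref{sec:LBR_MinCN}. Set $x=X_{1,0}-1\in[0,1]$ and $t=Y_{1,3}-3\in[0,1]$. The set of $P$ restricted to the seven cells of $u$ for which $u$ is a path of $\Gamma_{p,R}^{\nn}(P)$ is compact, and $t-x$ is continuous on it. So it suffices to show that $t-x<0$ on this set, i.e.\ that the constraints together with $t\ge x$ force $R\ge \min\{1/2,R(p)\}$. A continuous function that is negative on a compact set is bounded above by some $-c_1<0$, which gives~\eqref{eq:odd}.

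First I collect the elementary constraints coming from the cell boundaries and the nearest-neighbour edges, each of length at most $R$. The edge $(2,1)\to(1,1)$ gives $X_{1,1}\ge 2-R$. The edge $(2,2)\to(1,2)$ gives $X_{1,2}\ge 2-R$. The edge $(1,2)\to(1,3)$ gives $Y_{1,2}\ge Y_{1,3}-R=3+t-R$. The edge $(1,0)\to(1,1)$ gives $X_{1,1}\le 1+x+R$ and $Y_{1,0}\le 1$. Lemma~\ref{lem:R<p/q}(i) with $R<1/2$ is also available, and together with self-avoidance it pins the points near the relevant corners. The main inequality I would use is the multi-step triangle inequality in $\mathcal L^p$. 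Along the four edges from $u_1=(1,0)$ to $u_5=(1,2)$ we get
$$\|P_{1,2}-P_{1,0}\|_p\le 4R .$$
Here the horizontal gap is at least $X_{1,2}-X_{1,0}\ge 1-R-x$ and the vertical gap is at least $Y_{1,2}-Y_{1,0}\ge 2+t-R$. Before committing, I would sharpen the horizontal lower bound by also using the two-step constraint around the corner $(2,2)$, namely $\|P_{2,2}-P_{1,1}\|_p\le 2R$ with $P_{2,2}\in[2,3]^2$, exactly as in~\eqref{eq:StairsIneq}.

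The target is to reproduce the defining equation of $R(p)$,
$$(2-R)^p+(3-3R)^p=(4R)^p .$$
Its shape, with gaps $(2-R)/4$ and $(3-3R)/4$ per step, matches the extremal configuration of Section~\ref{sec:UBR_min}: there the four points $P_{0,0},P_{0,-1},P_{1,-1},P_{1,-2}$ realise equality, and $t=x$ holds by the symmetry of the pattern. So I would take $t\ge x$ and feed the above bounds into a function $h(x)$ equal to the $p$-th power of the lower bound for $\|P_{1,2}-P_{1,0}\|_p$. I would then minimise $h$ over admissible $x$ by a derivative computation, as was done for the function $g$ in Section~\ref{sec:LBR_MinCN}. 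The minimum should be attained at the symmetric value $x=t=(1-R)/2$-type point, which is read off from the construction, and it should equal $(2-R)^p+(3-3R)^p$. This yields $(4R)^p\ge (2-R)^p+(3-3R)^p$, i.e.\ $g_p(R)\ge 0$. By the monotonicity of $g_p$ established in Lemma~\ref{lem:R(p)}, this means $R\ge R(p)$, a contradiction.

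The main obstacle is choosing the right combination of partial distances. The na\"ive pairing above, horizontal $1-R-x$ against vertical $2+t-R$, gives $(1-R)^p+(2-R)^p$ at $x=0$ rather than the needed $(2-R)^p+(3-3R)^p$. So the intermediate points $P_{1,1},P_{2,1},P_{2,2}$ must be exploited more cleverly. My first attempt would split the path at $P_{1,1}$: one edge from $P_{1,0}$, which controls $x$ through $X_{1,1}\le 1+x+R$, and then $\|P_{1,2}-P_{1,1}\|_p\le 3R$ together with $\|P_{1,3}-P_{1,1}\|_p\le 4R$. I would weight the resulting inequalities so that the coefficients $1$ and $3$ in $(2-R,\,3-3R)$ appear. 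If that fails, I would fall back on a case analysis on which of the constraints $X_{1,1}\ge 2-R$ or $X_{1,2}\ge 2-R$ is active, and treat each case by one-variable convexity.
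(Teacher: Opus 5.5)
\textbf{There is a genuine gap: the proposal never produces the inequality that rules out $t\ge x$.} Your reduction is fine. The admissible configurations form a compact set on which $t-x$ is continuous, so it suffices to show that $t\ge x$ is incompatible with $R<\min\{1/2,R(p)\}$.

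The only estimate you actually derive is $\|P_{1,2}-P_{1,0}\|_p\le 4R$ with gaps $1-R-x$ and $2+t-R$. At best it yields $(4R)^p\ge(1-R)^p+(2-R)^p$, and as you note yourself, this is too weak. For $p=2$ it only forces $R\gtrsim 0.42$, whereas $\min\{1/2,R(2)\}=1/2$. What follows (splitting at $P_{1,1}$, ``weighting'' the inequalities, a fallback case analysis) is a list of things to try, not an argument.

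Your guiding heuristic is also off. In the periodic construction of Figure~\ref{fig:SAP4V}, the pattern of the lemma does occur (up to rotation), but with $t=0$ and $x=(3R(p)-1)/2>0$. So the extremal configuration does not sit on $t=x$. You cannot read a minimiser off it, nor expect an optimisation to return exactly $(2-R)^p+(3-3R)^p$.

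The missing idea is to use the $\mathcal L^p$ geometry only across the corner $u_1\to u_2\to u_3$, which is the one involving $x$, and to treat the last three steps coordinatewise. From $\|P_{2,1}-P_{1,0}\|_p\le 2R$, $X_{2,1}\ge 2$ and $Y_{1,0}\le 1$, you get $(1-x)^p+(Y_{2,1}-1)^p\le(2R)^p$. The three edges $u_3\to u_6$ give $Y_{2,1}\ge Y_{1,3}-3R=3+t-3R$. If $t\ge x$, convexity of $s\mapsto s^p$ yields
\[
(2R)^p\ \ge\ (1-x)^p+(2+x-3R)^p\ \ge\ 2\Big(\frac{3-3R}{2}\Big)^p .
\]
This is impossible, in both regimes:
\begin{itemize}
\item If $p\le \ln 2/\ln(4/3)$, then $(2R)^p<1\le 2(3/4)^p<2((3-3R)/2)^p$.
\item Otherwise $R<R(p)<1/2$, and $(2R)^p<(2R(p))^p=((2-R(p))/2)^p+((3-3R(p))/2)^p<2((3-3R(p))/2)^p<2((3-3R)/2)^p$, because $2-R(p)<3-3R(p)$.
\end{itemize}
Your compactness step then finishes the proof. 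This is the paper's argument, phrased there as an explicit upper bound on $t-x$ as a function of $x$, handled by the same one-variable convexity. Note that $R(p)$ enters only through this final comparison, not as the value of the optimisation.
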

    
See Appendix~\ref{App:Lem1} for the proof.

\begin{figure}[ht!]
    \centering
    \begin{tikzpicture}
	    \draw[ultra thick] (0,0) -- (1,0) -- (1,1) -- (2,1) -- (2,2) -- (1,2) -- (1,3);
	    \node[fill=black,inner sep=3pt,shape=circle,label={left :$(0,0)$}] at (0,0) {};
	    \node[fill=red,inner sep=3pt,shape=circle,label={right :$(1,0)$}] at (1,0) {};
	    \node[fill=black,inner sep=3pt,shape=circle,label={right :$(2,2)$}] at (2,2) {};
	    \node[fill=black,inner sep=3pt,shape=circle,label={left :$(1,3)$}] at (1,3) {};
    \end{tikzpicture}
    \caption{The path $u$ of Lemma~\ref{lem:Step5}. The red point corresponds to a position $u_{a_k}$.}
    \label{fig:Step5}
\end{figure}
    
\begin{lemma}\label{lem:Step3+odd}
    Let $p\in[1,\infty)$, $R<\min\{1/2,R(p)\}$. We consider the sequence $u=(u_k)_{0\leq k\leq9}$ defined by
	$$\begin{array}{llll}
	u_0 = (0,0), & u_1 = (1,0), & u_2 = (1,1), & u_3 = (2,1),\\
	u_4 = (2,2), & u_5 = (1,2), & u_6 = (1,3), & u_7 = (0,3),\\
	u_8 = (0,2), & u_9 = (-1,2), &&
	\end{array}
	$$
	see Figure~\ref{fig:Step3+odd}. 
	Then, there exists $c_2>0$ such that for any $P\in\mathcal{P}$ such that $u$ is a path of $\Gamma_{p,R}^{\nn}(P)$, we have 
	\begin{equation}\label{eqodd}
	(1 + (-1) - X_{-1,2}) - (X_{1,0}-1) \leq -c_2<0.
	\end{equation}
\end{lemma}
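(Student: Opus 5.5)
The plan is to reduce Lemma~\ref{lem:Step3+odd} to Lemma~\ref{lem:Step5} plus one further local estimate, then finish with a compactness argument as at the end of Section~\ref{sec:LBR_MinCN}. Write $\alpha=X_{1,0}-1\in[0,1]$ for the quantity $A_k$ at $u_{a_k}=(1,0)$ with $\delta_k=\ra$. At the end point $(-1,2)$ the relevant direction is $\la$, so the new quantity is $1+(-1)-X_{-1,2}=-X_{-1,2}$. This is the distance from $P_{-1,2}$ to the right border $x=0$ of its cell. The goal~\eqref{eqodd} is therefore $-X_{-1,2}\le \alpha-c_2$.

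\emph{Step 1 (first six steps).} The path $u_0,\dots,u_6$ is exactly the path of Lemma~\ref{lem:Step5}. Setting $\beta=Y_{1,3}-3$, that lemma gives $\beta\le \alpha-c_1$. It then suffices to prove $-X_{-1,2}\le \beta+\eta$ with some margin $\eta<c_1$, or, more robustly, to prove $-X_{-1,2}\le\beta$ up to an error that can be absorbed.

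\emph{Step 2 (last three steps).} The tail $(1,3)\to(0,3)\to(0,2)\to(-1,2)$ has steps $(\la,\da,\la)$. After a rotation by $\pi/2$ followed by a reflection, this is a staircase of the same type as the initial segment $(\ra,\ua,\ra)$ used in Lemma~\ref{lem:Step5}. The cell constraints give
\[
X_{1,3}\ge 1,\quad X_{0,3}\le 1,\quad Y_{0,3}\ge 3,\quad Y_{0,2}\le 3,\quad X_{-1,2}\le 0 .
\]
I would combine these with the three edge constraints $\|P_{u_{l+1}}-P_{u_l}\|_p\le R$, exactly as in the derivation of \eqref{eq:StairsIneq}. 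This should bound $-X_{-1,2}$ by a function of $\beta$ that is at most $\beta$, with equality only in a degenerate extremal configuration. The main tool is that in a $\la\da\la$ corner the two horizontal displacements and the one vertical displacement must together span the cell boundaries at $x=1$, $y=3$ and $x=0$.

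\emph{Step 3 (compactness).} The set of $P$ restricted to $u_0,\dots,u_9$ that realise the path is a compact subset of a finite product of squares. The left-hand side of \eqref{eqodd} is continuous on it. It is therefore enough to show the inequality $(-X_{-1,2})-(X_{1,0}-1)<0$ pointwise and strictly; a uniform constant $c_2>0$ then follows. Strictness comes from Step~1, since $c_1>0$, provided Step~2 gives a non-strict bound $-X_{-1,2}\le\beta$.

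The main obstacle is Step~2. The naive chain of triangle inequalities only gives $-X_{-1,2}\le 2R-X_{0,3}+\dots$, which is too weak. The $p$-norm coupling between the horizontal and vertical components of the edge $P_{1,3}P_{0,3}$ has to be used sharply, in the same way $R(p)$ enters Lemma~\ref{lem:Step5}. If a clean bound $-X_{-1,2}\le\beta$ fails, I would instead optimise jointly over the whole nine-step path. That means maximising $(-X_{-1,2})-\alpha$ under all constraints and showing that the maximum is negative exactly when $R<\min\{1/2,R(p)\}$. The extremal configuration should be the periodic pattern of Figure~\ref{fig:SAP4V}, where equation~\eqref{eq:RMINLocal} is tight.
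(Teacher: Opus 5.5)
Your Step~2 is false, and the reduction through $\beta=Y_{1,3}-3$ cannot be repaired, because $\beta$ is the wrong quantity to carry from $u_0,\dots,u_6$ to the tail. Take $p=2$ and $R=0.49<\min\{1/2,R(2)\}$, where $R(2)\approx 0.518$.

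The tail alone is realised by $P_{1,3}=(1,3)$, $P_{0,3}=(0.51,3)$, $P_{0,2}=(0.02,3)$, $P_{-1,2}=(-0.47,3)$, with all steps of length $R$. This gives $\beta=0$ but $-X_{-1,2}=0.47$.

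The head $u_0,\dots,u_6$ alone is realised by $P_{0,0}=(1,1)$, $P_{1,0}=(1.2,1)$, $P_{1,1}=(1.6,1.28)$, $P_{2,1}=(2,1.56)$, $P_{2,2}=(2,2.05)$, $P_{1,2}=(2,2.525)$, $P_{1,3}=(2,3)$. This gives $\alpha=0.2$ and $\beta=0$, consistent with Lemma~\ref{lem:Step5}.

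Hence any bound $-X_{-1,2}\le F(\beta)$ obtained from the tail constraints has $F(0)\ge 0.47>0.2=\alpha$. Combining it with $\beta\le\alpha-c_1$ can therefore never give \eqref{eqodd}, with or without an absorbed error. In the full path this situation is excluded only because the head forces $P_{1,3}$ to the right: here $X_{1,3}=2$, so $X_{-1,2}\ge 2-3R>0$, which contradicts $X_{-1,2}\le 0$. That information lives in $X_{1,3}$, which your Step~1 discards.

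The missing idea is a second sharp $\ell^p$ corner estimate in the middle of the path, at $(2,2)\to(1,2)\to(1,3)$, which controls $X_{1,3}$. The paper chains four bounds, with $x=X_{1,0}-1$:
\begin{itemize}
\item the first corner gives $Y_{2,1}\le 1+[(2R)^p-(1-x)^p]^{1/p}$, which is \eqref{eq:Y_(2,1)};
\item a single step gives $Y_{2,2}\le Y_{2,1}+R$;
\item the second corner gives $(2-X_{1,3})^p+(3-Y_{2,2})^p\le(2R)^p$;
\item on the tail, only the crude bound $X_{-1,2}\ge X_{1,3}-3R$ is used.
\end{itemize}
So the tail, which you call the main obstacle, is the easy part. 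The sharp estimate is needed exactly on the stretch where Lemma~\ref{lem:Step5} uses the lossy bound $Y_{1,3}\le Y_{2,1}+3R$, which is why that lemma is not used at all.

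Set $\phi(t)=((2R(p))^p-t^p)^{1/p}$. The claim then reduces to $\phi(1-x)+\phi(2+x-3R(p))\le 2-R(p)$. Since $\phi$ is concave and the two arguments sum to $3-3R(p)$, the maximum is at $x=(3R(p)-1)/2$. There the value equals $2-R(p)$ exactly, by \eqref{eq:RMINLocal} divided by $2^p$. Strictness comes from $R<R(p)$, since the bound is increasing in $R$, and uniformity comes from compactness, as in your Step~3.

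Your fallback of optimising jointly over all nine steps restates the lemma rather than proving it. It is this two-corner chain that makes the joint optimisation explicit.
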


See Appendix~\ref{App:Lem2} for the proof.

\begin{figure}[ht!]
    \centering
    \begin{tikzpicture}
	    \draw[ultra thick] (0,0) -- (1,0) -- (1,1) -- (2,1) -- (2,2) -- (1,2) -- (1,3) -- (0,3) -- (0,2) -- (-1,2);
	    \node[fill=black,inner sep=3pt,shape=circle,label={left :$(0,0)$}] at (0,0) {};
	    \node[fill=red,inner sep=3pt,shape=circle,label={right :$(1,0)$}] at (1,0) {};
	    \node[fill=black,inner sep=3pt,shape=circle,label={right :$(2,1)$}] at (2,1) {};
	    \node[fill=blue,inner sep=3pt,shape=circle,label={right :$(2,2)$}] at (2,2) {};
	    \node[fill=black,inner sep=3pt,shape=circle,label={above :$(1,3)$}] at (1,3) {};
	    \node[fill=black,inner sep=3pt,shape=circle,label={left :$(-1,2)$}] at (-1,2) {};
    \end{tikzpicture}
    \caption{The path $u$ of Lemma~\ref{lem:Step3+odd}. The red and the blue point correspond respectively to a position $u_{a_k} = u_{n_{b_k}}$ and $ u_{n_{{b_k}+1}}$.}
    \label{fig:Step3+odd}
\end{figure}
    
Lemmas~\ref{lem:Description},~\ref{lem:Step5} and~\ref{lem:Step3+odd} are important and they permit, in the following, to upperbound $A_{k+1}-A_k$.\par

\bigskip
    
Let $k\geq1$ be fixed. We suppose without loss of generality that $s_{a_k}=s_{a_k+2}=\ra$ and $s_{a_k+1}= s_{a_k+3} = \ua$.

\paragraph{Case \boldmath$b_{k+1}-b_k = 1$.} By Equation~\eqref{eq:b_k}, $n_{b_{k}+1} - n_{b_k}\not=3$. We now distinguish the even case and the odd case.

\begin{itemize}
    \item {\bf Even case:} $n_{b_{k+1}} - n_{b_k} = 2l$ with $l\geq1$. \\
    Then point $u_{a_{k+1}}$ is a green point on Figure~\ref{fig:Description}.
    By Lemma~\ref{lem:Description}, we have $\delta_{k+1} = \delta_k = \ra$ and $x_{u_{a_{k+1}}}= x_{u_{a_{k}}} + l$. So, $X_{u_{a_k+2l}} \leq X_{u_{a_k}} + 2lR$. We obtain then
    \begin{align*}
        A_{k+1} - A_{k} &= (X_{u_{a_{k+1}}} - x_{u_{a_{k+1}}}) - (X_{u_{a_k}} - x_{u_{a_k}})\\
        &\leq-2l\left(\frac{1}{2}-R\right) \leq -(1-2R) < 0.
    \end{align*}
    \item {\bf Odd case:} $n_{b_{k+1}} - n_{b_k} = 2l+1$ with $l\geq2$.\\
    The point $u_{a_{k+1}}$ is a purple point, except the first one (because $l \geq 2$), on Figure~\ref{fig:Description}. To upperbound $A_{k+1}-A_k$, we split this value into two terms. One is upperbound by Lemma~\ref{lem:Step5} and the other one using Lemma~\ref{lem:Description}.
    We have
    \begin{align*}
        A_{k+1} - A_{k} &= (Y_{u_{a_{k+1}}} - y_{u_{a_{k+1}}}) - (X_{u_{a_k}} - x_{u_{a_k}})\\
        &= (Y_{u_{a_{k}+2l+1}} -y_{u_{a_{k}+2l+1}}) - (Y_{u_{a_{k}+5}} - y_{u_{a_{k}+5}}) \\
        & \quad + (Y_{u_{a_{k}+5}} - y_{u_{a_{k}+5}}) - (X_{u_{a_k}} - x_{u_{a_k}}).\\
    \end{align*}
    
     Up to a translation, $(u_l)_{a_k-1\leq l \leq a_{k+5}}$ satisfies the hypotheses of Lemma~\ref{lem:Step5}. It follows that 
    $$(Y_{u_{a_{k}+5}} - y_{u_{a_{k}+5}}) - (X_{u_{a_k}} - x_{u_{a_k}}) \leq -c_1.$$
    
    By applying Lemma~\ref{lem:Description} to $(u_l)_{l\geq a_{k+2}}$, we have $y_{u_{a_{k}+2l+1}} = y_{u_{a_{k}+5}} + l-2$. Since $u$ is a path of $\Gamma_{p,R}^{\nn}(P)$, it implies that
    $$Y_{u_{a_{k}+2l+1}} \leq  Y_{u_{a_{k}+5}} + 2(l-2)R.$$
    
    So, we obtain
    $$A_{k+1} - A_k \leq -2(l-2)\left(\frac{1}{2}-R\right) -c_1 \leq -c_1 < 0.$$
\end{itemize}

\paragraph{Case \boldmath$b_{k+1}-b_k = 2$.} By definition (see Equation~\eqref{eq:b_k}), we have $n_{b_k+1} - n_{b_k} = 3$ and 
either $n_{b_k+2} - n_{b_k+1} = 2l$ with $l\geq1$, or $n_{b_{k}+2} - n_{b_k+1} = 2l+1$ with $l\geq2$. Such configurations are represented on Figure~\ref{fig:diffb=2}.

\begin{figure}
    \centering
    \begin{tikzpicture}
        \draw [ultra thick] (0,0) -- (1,0) -- (1,1) -- (2,1) -- (2,2) -- (1,2) -- (1,3) -- (0,3);
        \draw [dashed,ultra thick,color=darkpastelgreen] (0,3) -- (0,4) -- (1,4) -- (1,5) -- (0,5);
        \draw [dashed,ultra thick,color=violet] (0,3) -- (0,2) -- (-1,2) -- (-1,3) -- (-2,3);
        \node[fill=none,draw=none] at (-1,1.5) {\color{violet}$n_{b_k+2}-n_{b_k+1}=2l+1$};
        \node[fill=none,draw=none] at (3,4) {\color{darkpastelgreen}$n_{b_k+2}-n_{b_k+1}=2l$};
        \node[inner sep=3pt,fill=red,shape=circle,label={right :$u_{a_k}$},label={below :{\color{red}$\ra$}}] at (1,0) {};
        \node[inner sep=3pt,fill=blue,shape=circle,label={right :$u_{n_{b_k+1}}$}] at (2,2) {};
        
        \node[inner sep=2pt,fill=darkpastelgreen,shape=circle,] at (1,3) {};
        \node[inner sep=2pt,fill=darkpastelgreen,shape=circle,] at (0,4) {};
        \node[inner sep=2pt,fill=darkpastelgreen,shape=circle,] at (1,5) {};

        \node[inner sep=2pt,fill=violet,shape=circle,] at (-1,2) {};
        \node[inner sep=2pt,fill=violet,shape=circle,] at (-2,3) {};

    \end{tikzpicture}
    \caption{Pattern observed between the points $u_{n_{b_k}}$ and $u_{n_{b_{k+1}}}$ when $b_{k+1}-b_k = 2$. The red arrow corresponds to $\delta_k$. The purple and green dots represents the possible positions of $u_{n_{b_{k+1}}} =u_{n_{b_{k}+2}} $.
    We remark that, starting from the point $u_{n_{b_k+1}-1}$, the pattern is similar to the one observed in Figure~\ref{fig:Description}.}
    \label{fig:diffb=2}
\end{figure}

\begin{itemize}
    \item {\bf Even case:} $n_{b_k+2} - n_{b_k+1} = 2l$ with $l\geq1$.\\
    The point $u_{a_{k+1}}$ is a green point on Figure~\ref{fig:diffb=2}. 
    As in the previous case ($b_{k+1}-b_k = 1$, odd case), we obtain
    $$ A_{k+1} - A_k \leq -2(l-1)\left(\frac{1}{2}-R\right)-c_1 \leq -c_1.$$ 
    
    \item{\bf Odd case:} $n_{b_k+2} - n_{b_k+1} = 2l+1$ with $l\geq2$. \\
    The point $u_{a_{k+1}}$ is a purple point on Figure~\ref{fig:diffb=2}.
    By Lemma~\ref{lem:Description}, we have $A_{k+1} = 1+ x_{u_{a_{k+1}}}-X_{u_{a_{k+1}}}$. As previously, to upperbound $A_{k+1}-A_k$, we split it into two terms. One is upperbounded by Lemma~\ref{lem:Step3+odd} and the other one using Lemma~\ref{lem:Description}.
    
    \begin{align*}
        A_{k+1} - A_k &= (1+x_{u_{a_{k+1}}}-X_{u_{a_{k+1}}} ) - (X_{{u_{a_k}}}-x_{{u_{a_k}}})\\
        & = (1+x_{u_{a_{k+1}}}-X_{u_{a_{k+1}}}) - (1+ x_{u_{a_k+8}}-X_{u_{a_k+8}}) \\
        & \quad + (1+x_{u_{a_k+8}}-X_{u_{a_k+8}}) - (X_{{u_{a_k}}}-x_{{u_{a_k}}}).
    \end{align*}
    
     Up to a translation, $(u_l)_{a_k-1\leq l \leq a_{k+8}}$ satisfies the hypotheses of Lemma~\ref{lem:Step3+odd}. It follows that
    $$(1+ x_{u_{a_k+8}} -X_{u_{a_k+8}}) - (X_{{u_{a_k}}} - x_{{u_{a_k}}})\leq -c_2.$$
    
    By Lemma~\ref{lem:Description}, we have  $x_{u_{a_{k+1}}} = x_{u_{a_k+8}} - (l-2)$ which implies that
    $$(1+x_{u_{a_{k+1}}}-X_{u_{a_{k+1}}}) - (1+ x_{u_{a_k+8}}-X_{u_{a_k+8}}) \leq -2(l-2)\left(\frac{1}{2}-R\right).$$
   
    So, we obtain
    $$A_{k+1} - A_k \leq -2(l-2)\left(\frac{1}{2}-R\right) - c_2 \leq - c_2.$$
\end{itemize}

\paragraph{Case \boldmath$b_{k+1}-b_k = 3$.} By Equation~\eqref{eq:b_k}, we have  $n_{b_k+1} - n_{b_k} = n_{b_k+2} - n_{b_k+1} = 3$. Such configurations are represented on Figure~\ref{fig:diffb=3}.

\begin{figure}
    \centering
    \begin{tikzpicture}
        \draw [ultra thick] (0,0) -- (1,0) -- (1,1) -- (2,1) -- (2,2) -- (1,2) -- (1,3) -- (0,3) -- (0,2) -- (-1,2) -- (-1,1);
        \draw [dashed,ultra thick,color=violet] (-1,1) -- (0,1) -- (0,0) -- (-1,0) -- (-1,-1);
        \draw [dashed,ultra thick,color=darkpastelgreen] (-1,1) -- (-2,1) -- (-2,2) -- (-3,2) -- (-3,1);
        \node[fill=none,draw=none] at (-3,-.5) {\color{violet}$n_{b_k+3}-n_{b_k+2}=2l+1$};
        \node[fill=none,draw=none] at (-2,2.5) {\color{darkpastelgreen}$n_{b_k+3}-n_{b_k+2}=2l$};
        \node[inner sep=3pt,fill=red,shape=circle,label={right :$u_{a_k}$},label={below :{\color{red}$\ra$}}] at (1,0) {};
        \node[inner sep=3pt,fill=blue,shape=circle,label={right :$u_{n_{b_k+1}}$}] at (2,2) {};
        \node [fill=blue,shape=circle,inner sep =3pt,shape=circle,label={above :$u_{n_{b_k+2}}$}] at (0,3) {};
        
        \node[inner sep=2pt,fill=darkpastelgreen,shape=circle,] at (-1,2) {};
        \node[inner sep=2pt,fill=darkpastelgreen,shape=circle,] at (-2,1) {};
        \node[inner sep=2pt,fill=darkpastelgreen,shape=circle,] at (-3,2) {};
        
        \node[inner sep=2pt,fill=violet,shape=circle,] at (-1,1) {};
        \node[inner sep=2pt,fill=violet,shape=circle,] at (0,0) {};
        \node[inner sep=2pt,fill=violet,shape=circle,] at (-1,-1) {};
    \end{tikzpicture}
    \caption{Pattern observed between the points $u_{n_{b_{k}}}$ and $u_{n_{b_{k+1}}}$ when $b_{k+1}-b_k = 3$. The purple and green dots represents the possible positions of $u_{n_{b_{k+1}}}$.
    We remark that, starting from the point $u_{n_{b_k+2}-1}$, the pattern is similar to the one observed in Figure~\ref{fig:Description}.}
    \label{fig:diffb=3}
\end{figure}

\begin{itemize}
    \item {\bf Even case:} $n_{b_k+3} - n_{b_k+2} = 2l$ with $l\geq1$.\\
    The point $u_{a_{k+1}}$ is a green point on Figure~\ref{fig:diffb=3}. 
    As in the previous case ($b_{k+1}-b_k = 2$, odd case), we obtain
    $$ A_{k+1} - A_k \leq -2(l-1)\left(\frac{1}{2}-R\right)-c_2 \leq -c_2.$$
    
    \item {\bf Odd case:} Suppose that $n_{b_k+3} - n_{b_k+2} = 2l+1$ with $l\geq1$. By Lemma~\ref{lem:Description}, we notice that $u_{n_{b_k}-1} = u_{n_{b_k}+11}$, see Figure~\ref{fig:diffb=3}. So $u$ is not a self-avoiding path and this case is excluded.
\end{itemize}

\paragraph{Conclusion.} For any $k\geq 1$, we proved that $A_{k+1} - A_k\leq -\min\{1-2R,c_1,c_2\}<0$. This concludes the proof.

\section{Critical radius ($R_c$)}\label{sec:Rc}
\subsection{Proof of Theorem \ref{thm:Rc}}\label{sec:proofRc}

Let $p\in[1,\infty]$.

\paragraph{Upper Bound.}

Since $R_c^{\cn}(p)\leq R_c^{\nn}(p)$, we only have to prove the upper bound for $R_{c}^{\nn}(p)$. To this end, we use a coupling argument, and compare  $\Gamma_{p,R}^{\nn}(P)$ with a Bernoulli percolation configuration $G_B(P)$ associated to the same set $P$ of points.

Let $\eps>0$, and let $R=\|(1,1+\eps)\|_p$. We define the set of edges  $E_B(P)$ of the (undirected) graph $G_B(P) = (\ZZ^2,E_B(P))$ by, for $i,j\in\ZZ$:
\begin{itemize}
    \item $\big((i,j),(i+1,j)\big)\in E_B(P) \iff X_{i,j}>i+1-\eps$,
    \item $\big((i,j),(i,j+1)\big)\in E_B(P) \iff Y_{i,j}>j+1-\eps$,
\end{itemize}
see an example of construction in Figure~\ref{fig:UBR_C}.

We clearly have $E_B(P)\subset E_{p,R}^{\nn}(P)$. Hence, if $G_B(P)$ percolates (i.e.\ has an infinite connected component), then $\Gamma_{p,R}^{\nn}(P)$ percolates.

Moreover, under the law $\mathbb{P}$ on $P$ defined in the Introduction, observe that each edge of the square lattice belongs to $E_B(P)$ with probability $\eps$, independently of the other edges.
Since the threshold for Bernoulli bond percolation on $\ZZ^2$ is equal to $1/2$, it follows that, for any $R > \|(1,\frac{3}{2})\|_p$, $\mathbb{P}(|\C_B(P)|=\infty)>0$.

\begin{figure}
    \centering
    \begin{minipage}{.35\textwidth}
    \scalebox{.8}{%
    \begin{tikzpicture}
    \draw[step=1cm,color=gray,ultra thin] (-2.4,-2.4) grid (3.4,3.4);
	\foreach \i in {-2,...,2}{
	    \foreach \j in {-2,...,2}{
    		\draw[draw=black,fill=red!20,fill opacity=0.5] (\i+.68,\j) rectangle ++(0.32,1);
    		\draw[draw=black,fill=blue!20,fill opacity=0.5] (\i,\j+.68) rectangle ++(1,0.32);
	}}

    \draw (-1.83,-1.21) -- (-0.96,-1.14);
    \draw [color=blue] (-1.83,-1.21) -- (-1.6,-0.59);
    \draw (-1.6,-0.59) -- (-0.32,-0.62);
    \draw (-1.6,-0.59) -- (-1.65,0.85);
    \draw (-1.65,0.85) -- (-0.37,0.11);
    \draw [color=blue] (-1.65,0.85) -- (-1.04,1.11);
    \draw [color=red] (-1.04,1.11) -- (-0.59,1.55);
    \draw (-1.04,1.11) -- (-1.74,2.23);
    \draw (-1.74,2.23) -- (-0.36,2.21);
    \draw (-0.96,-1.14) -- (0.44,-1.92);
    \draw [color=blue] (-0.96,-1.14) -- (-0.32,-0.62);
    \draw [color=red] (-0.32,-0.62) -- (0.84,-0.29);
    \draw (-0.32,-0.62) -- (-0.37,0.11);
    \draw (-0.37,0.11) -- (0.79,0.43);
    \draw (-0.37,0.11) -- (-0.59,1.55);
    \draw (-0.59,1.55) -- (0.48,1.2);
    \draw (-0.59,1.55) -- (-0.36,2.21);
    \draw (-0.36,2.21) -- (0.32,2.41);
    \draw (0.44,-1.92) -- (1.38,-1.86);
    \draw [color=red] (0.84,-0.29) -- (1.29,-0.61);
    \draw [color=blue] (0.84,-0.29) -- (0.79,0.43);
    \draw [color=red](0.79,0.43) -- (1.29,0.7);
    \draw (0.79,0.43) -- (0.48,1.2);
    \draw (0.48,1.2) -- (1.72,1.97);
    \draw (0.48,1.2) -- (0.32,2.41);
    \draw (0.32,2.41) -- (1.3,2.41);
    \draw (1.38,-1.86) -- (2.21,-1.29);
    \draw (1.38,-1.86) -- (1.29,-0.61);
    \draw (1.29,-0.61) -- (2.81,-0.85);
    \draw (1.29,-0.61) -- (1.29,0.7);
    \draw (1.29,0.7) -- (2.67,0.6);
    \draw (1.29,0.7) -- (1.72,1.97);
    \draw [color=red] (1.72,1.97) -- (2.24,1.03);
    \draw [color=blue] (1.72,1.97) -- (1.3,2.41);
    \draw (1.3,2.41) -- (2.63,2.87);
    \draw [color=blue] (2.21,-1.29) -- (2.81,-0.85);
    \draw (2.81,-0.85) -- (2.67,0.6);
    \draw (2.67,0.6) -- (2.24,1.03);
    
    \node [fill=blue,inner sep=2pt,shape=circle] at (-1.83,-1.21) {};
    \node [fill=black,inner sep=2pt,shape=circle] at (-1.6,-0.59) {};
    \node [fill=blue,inner sep=2pt,shape=circle] at (-1.65,0.85) {};
    \node [fill=red,inner sep=2pt,shape=circle] at (-1.04,1.11) {};
    \node [fill=black,inner sep=2pt,shape=circle] at (-1.74,2.23) {};
    \node [fill=blue,inner sep=2pt,shape=circle] at (-0.96,-1.14) {};
    \node [fill=red,inner sep=2pt,shape=circle] at (-0.32,-0.62) {};
    \node [fill=black,inner sep=2pt,shape=circle] at (-0.37,0.11) {};
    \node [fill=black,inner sep=2pt,shape=circle] at (-0.59,1.55) {};
    \node [fill=black,inner sep=2pt,shape=circle] at (-0.36,2.21) {};
    \node [fill=black,inner sep=2pt,shape=circle] at (0.44,-1.92) {};
    \node [fill=purple,inner sep=2pt,shape=circle] at (0.84,-0.29) {};
    \node [fill=red,inner sep=2pt,shape=circle] at (0.79,0.43) {};
    \node [fill=black,inner sep=2pt,shape=circle] at (0.48,1.2) {};
    \node [fill=black,inner sep=2pt,shape=circle] at (0.32,2.41) {};
    \node [fill=black,inner sep=2pt,shape=circle] at (1.38,-1.86) {};
    \node [fill=black,inner sep=2pt,shape=circle] at (1.29,-0.61) {};
    \node [fill=black,inner sep=2pt,shape=circle] at (1.29,0.7) {};
    \node [fill=purple,inner sep=2pt,shape=circle] at (1.72,1.97) {};
    \node [fill=black,inner sep=2pt,shape=circle] at (1.3,2.41) {};
    \node [fill=blue,inner sep=2pt,shape=circle] at (2.21,-1.29) {};
    \node [fill=red,inner sep=2pt,shape=circle] at (2.81,-0.85) {};
    \node [fill=black,inner sep=2pt,shape=circle] at (2.67,0.6) {};
    \node [fill=black,inner sep=2pt,shape=circle] at (2.24,1.03) {};
    \node [fill=blue,inner sep=2pt,shape=circle] at (2.63,2.87) {};
\end{tikzpicture}
    }
    \end{minipage}
    \hspace{-.2cm}$\succeq$\hspace{1cm}
    \begin{minipage}{.35\textwidth}
    \scalebox{.8}{%
    \begin{tikzpicture}
        \foreach \i in {0,...,5}{
            \draw[color=gray,ultra thin] (-0.4,\i) -- (0,\i);
            \draw[color=gray,ultra thin] (5.4,\i) -- (5,\i);
            \draw[color=gray,ultra thin] (\i,5) -- (\i,5.4);
            \draw[color=gray,ultra thin] (\i,0) -- (\i,-.4);
        }

	    \draw [color=blue,ultra thick] (0,0) -- (0,1);
	    \draw [color=blue,ultra thick] (4,4) -- (4,5);
	    \draw [color=blue,ultra thick] (0,2) -- (0,3);
	    \draw [color=blue,ultra thick] (1,0) -- (1,1);
	    \draw [color=red,ultra thick] (0,3) -- (1,3);
	    \draw [color=red,ultra thick] (1,1) -- (2,1);
	    \draw [color=red,ultra thick] (2,2) -- (3,2);
	    \draw [color=red,ultra thick] (4,1) -- (5,1);
	    \draw [color=blue,ultra thick] (2,1) -- (2,2);
	    \draw [color=red,ultra thick] (2,1) -- (3,1);
	    \draw [color=blue,ultra thick] (3,3) -- (3,4);
	    \draw [color=red,ultra thick] (3,3) -- (4,3);
	    \draw [color=blue,ultra thick] (4,0) -- (4,1);
	    \foreach \i in {0,...,5}{
	        \foreach \j in {0,...,5}{
        		\node[inner sep=2pt,shape=circle,fill=black] at (\i,\j) {};
	    }}
	    \node[inner sep=2pt,shape=circle,fill=blue] at (0,0) {};
        \node[inner sep=2pt,shape=circle,fill=blue] at (4,4) {};
	    \node[inner sep=2pt,shape=circle,fill=blue] at (1,0) {};
	    \node[inner sep=2pt,shape=circle,fill=blue] at (0,2) {};
        \node[inner sep=2pt,shape=circle,fill=red] at (0,3) {};
        \node[inner sep=2pt,shape=circle,fill=red] at (1,1) {};
        \node[inner sep=2pt,shape=circle,fill=red] at (2,2) {};
        \node[inner sep=2pt,shape=circle,fill=red] at (4,1) {};
        \node[inner sep=2pt,shape=circle,fill=purple] at (2,1) {};
        \node[inner sep=2pt,shape=circle,fill=purple] at (3,3) {};
        \node[inner sep=2pt,shape=circle,fill=blue] at (4,0) {};
    \end{tikzpicture}
    }
    \end{minipage}
	\caption{On the left, the nearest neighbor configuration $\Gamma_{p,R}^{\nn}(P)$, and on the right the Bernoulli percolation configuration $G_B(P)$. We place a horizontal (resp. vertical) edge in $G_B(P)$ if the corresponding point of $P$ lies inside the red (resp. blue) rectangle.}
	\label{fig:UBR_C}
\end{figure}

\paragraph{Lower bound.}

We again use a coupling argument. Let $\Gamma_{R}^{\dec}(P)= (\ZZ^2,E_{R}^{\dec}(P))$ be the (undirected) subgraph of the square lattice whose set of edges $E_{R}^{\dec}(P)$ satisfies, for $i,j\in\ZZ$:
\begin{itemize}
    \item $\big((i,j),(i+1,j)\big)\in E_{R}^{\dec}(P) \iff  |X_{i,j} - X_{i+1,j}|\leq R$,
    \item $\big((i,j),(i,j+1)\big)\in E_{R}^{\dec}(P) \iff |Y_{i,j} - Y_{i,j+1}|\leq R$.
\end{itemize}

We say that such a graph $\Gamma_R^{\dec}(P)$ is a configuration of the \emph{decorrelated model} of parameter $R$, and we denote by $R_c^{\dec}$ the critical radius of this new model. 
For any $P\in\mathcal{P}$, we have $E_{p,R}^{\nn}(P)\subset E_R^{\dec}(P)$. Thus, 
$R_c^{\dec}\leq R_c^{\nn}(p).$
Therefore, it suffices to establish the lower bound
$$\sqrt{2-\sqrt{2}}\leq R_c^{\dec}.$$

To do that, we show that if $R < \sqrt{2-\sqrt{2}}$, then the dual graph of $\Gamma_R^{\dec}(P)$ almost surely percolates.
We denote this dual graph by $\overline\Gamma_R^{\dec}(P) = \left(\left(\frac{1}{2},\frac{1}{2}\right)+\ZZ^2,\overline E_R^{\dec}(P)\right)$. 
Recall that, by definition, $e\in E_R^{\dec}(P) \iff \overline e \notin \overline E_R^{\dec}(P)$, where for an edge $e$ of the standard square lattice, we denote by $\overline{e}$ the corresponding (orthogonal) edge in the dual lattice.

Let us consider the set $E'$ of edges of the dual square lattice obtained by conserving only the edges of one line over two, and of one column over two, see Figure~\ref{fig:LBR_cDec}. Mathematically, it is defined by:
\begin{align*}
    E'= & \left\{\left((1/2+2i,1/2+j),(1/2+2i,3/2+j)\right) : i,j \in \ZZ\right\} \\
    &  \quad \cup \left\{\left((1/2+i,1/2+2j),(3/2+i,1/2+2j)\right) : i,j \in \ZZ \right\}.
\end{align*}

We claim that each edge of $E'$ belongs to $\overline E_R^{\dec}(P)$ with probability
$$1-p_R = 1-\mathbb{P}\left(\left((0,0),(1,0)\right)\in E_R^{dec}(P)\right)=1-\mathbb{P}\left(|X_{0,0} - X_{1,0}|\leq R\right),$$
independently of the other edges of $E'$.

Indeed, the horizontal edge $((1/2+2i,1/2+j),(1/2+2i,3/2+j)) \in E'$ belongs to $\overline E_R^{\dec}(P)$ if and only if $|Y_{2i,j} - Y_{2i,j+1}| > R$. This happens with probability $1-p_R$, and furthermore, two distinct horizontal edges depend on different random variables. So, all horizontal edges are independent. Similarly, all vertical edges are independent. And, since horizontal edges depend on $Y$ random variables, and vertical edges on $X$ random variables, all edges are independent.

By grouping consecutive edges of $E'$ in pairs, and comparing with Bernoulli bond percolation on the square lattice of parameter $(1-p_R)^2$, we obtain that if $(1-p_R)^2>1/2$, then the restriction of $\overline\Gamma_R^{\dec}(P)$ to the edges of $E'$ has almost surely an infinite connected component, so that $\overline\Gamma_R^{\dec}(P)$ percolates almost surely. Furthermore, standard results on Bernoulli percolation imply that in this case, every vertex of $\ZZ^2$ is almost surely surrounded by the infinite connected component, so that $\Gamma_R^{\dec}(P)$ has almost surely no infinite connected component.
For $R\leq1$, we have $p_R=\frac{R^2}{2}$. As a consequence, if $R<\sqrt{2-\sqrt{2}}$, then $\Gamma_R^{\dec}(P)$ has almost surely no infinite connected component. It follows that $R_c^{\dec}\geq \sqrt{2-\sqrt{2}}$.

\begin{figure}[ht!]
\centering
	\begin{tikzpicture}
		\begin{scope}[xshift=0.5 cm,yshift=0.5 cm]
				\foreach \i in {-4,...,3}{
				\foreach \j in {-4,...,3}{
					\node[inner sep=2pt,fill=black,shape=circle] at (\i,\j) {};
			}}
			\draw[step=2cm,color=black,ultra thick,dashed] (-4.8,-4.8) grid (3.8,3.8);
			\begin{scope}[xshift=0.5cm,yshift=0.5cm]
			    \draw[step=1cm,color=gray,ultra thin] (-5.3,-5.3) grid (3.3,3.3);
			\end{scope}
			\foreach \i in {-2,...,1}{
			\foreach \j in {-2,...,1}{
				\node[inner sep=2pt,fill=black] at (2*\i,2*\j) {};
			}}
			\begin{scope}[yshift=1 cm]
			\foreach \i in {-2,...,1}{
			\foreach \j in {-2,...,1}{
				\node[inner sep=2pt,fill=black] at (2*\i,2*\j) {};
			}}
			\end{scope}
			\begin{scope}[xshift=1 cm]
			\foreach \i in {-2,...,1}{
			\foreach \j in {-2,...,1}{
				\node[inner sep=2pt,fill=black] at (2*\i,2*\j) {};
			}}
			\end{scope}
		\end{scope}
	\end{tikzpicture}
	\caption{The underlying grey grid is the square lattice $\ZZ^2$, and the dashed edges are the edges belonging to the set $E'$. The points are the vertices of the dual lattice $(1/2,1/2)+\ZZ^2$.}
	\label{fig:LBR_cDec}
\end{figure}

\subsection{Estimates}\label{sec:Estimates}

In Figure~\ref{fig:R_cEsti}, we provide estimates of the critical radius for some values of the parameter $p$, both for the complete neighbor model and for the nearest neighbor one.

\begin{figure}[ht!]
    \centering
    \begin{tabular}{|c|c|c|c|c|c|c|c|c|}
        \hline
        $p$ & 1 & 1.5 & 2 & 2.5 & 3 & 3.5 & 4 & $\infty$ \\
        \hline
        $R_c^{\cn}$ & 1.305 & 1.120 & 1.040 & 1.005 & 0.985 & 0.980 & 0.965 & 0.880 \\
        \hline
        $R_c^{\nn}$ & 1.350 & 1.120 & 1.095 & 1.050 & 1.040 & 1.030 & 1.025 & 1 \\
        \hline
    \end{tabular}
    \caption{Estimates of the critical radius for both the complete and the nearest neighbor model.}
    \label{fig:R_cEsti}
\end{figure}

To obtain these values, we estimated the probability for the origin to be connected to a border of the grid $[-N,N]^2$, for $N=40$. Precisely, for each $R=0.005\times n$ with $n\geq 0$, we ran $1000$ simulations of the models on the grid $[-40,40]^2$. The approximations of $R_c$ given in the table then correspond to the first integers $n\geq 0$ such that for at least one of the $1000$ simulations, the origin was connected to a border of the grid.

In Figure~\ref{fig:ResultsCN} and \ref{fig:ResultsNN}, our estimates of the critical are represented with crosses.

\section{Perspectives}
\paragraph{Critical radius.}

Theorem~\ref{thm:Rc} provides lower and upper bounds of the critical radius, for both the complete neighbor and the nearest neighbor model. However, the exact values remain to be determined. In Section~\ref{sec:Estimates}, we propose numerical estimations of the critical radius, obtained with the help of computer simulations. They suggest that for the nearest neighbor model  with $p=\infty$, the critical radius is equal to $1$. For $p=\infty$ and $R\geq 1$, $\Gamma_{\infty,1}^{\nn}(P)$ coincides with the decorrelated model $\Gamma_1^{\dec}(P)$ introduced in Section~\ref{sec:Rc}. We thus state the following conjecture.

\begin{conjecture}
$R_c^{\nn}(\infty) = R_c^{\dec} = 1$.
\end{conjecture}

 This conjecture is supported by the following properties.
\begin{itemize}
    \item For $R=1$, the density of edges is equal to:
    $${\mathbb P}(((0,0),(0,1))\in E_{\infty,1}^{\nn}(P))={\mathbb P}(X_{1,0}-X_{0,0}\leq 1)=1/2,$$
    a value that coincides with the critical probability for Bernoulli percolation on the square lattice. Moreover, the states of any two edges are independent, unless they constitute consecutive edges on a same horizontal or vertical line. Observe also that the probability of any finite pattern has a rational value, that can be computed in terms of counting permutations. 
    \item The decorrelated model exhibits a remarkable symmetry property with respect to the value  $R=1$. Precisely, for $R\in[0,2]$, let us denote by $\widetilde{\Gamma}_{R}^{\dec}(P)$ the subgraph of the lattice obtained from $\Gamma_{R}^{dec}(P)$ by switching the open and closed edges. Then, $\widetilde{\Gamma}_R^{\dec}(P)$ and $\Gamma_{2-R}^{\dec}(P)$ have the same distribution.
\end{itemize}

\paragraph{Unicity of the infinite connected component.}

In the super-critical regime, we conjecture that there is almost surely a unique infinite connected components, both for the complete model and for the nearest neighbor one. It seems that the proof strategy developed by Burton and Keane~\cite{BK89} for Bernoulli percolation can be used to prove the uniqueness for the nearest neighbor model with $p=\infty$ (at least), but what about the general case?

\paragraph{Asymptotic shape of the connected component of the origin.} For a graph $G=(\ZZ^2, E)$, let us denote by ${\mathcal B}_n(G)$ the set of points of $\ZZ^2$ that are at distance smaller than $n$ from the origin $(0,0)$ for the graph distance in $G$, that is: 
$${\mathcal B}_n(G)=\{v\in \ZZ^2 : \exists u_0=(0,0),u_1,\ldots, u_{n-1},u_n=v \in \ZZ^2, \forall  i\in\{0,\ldots, n-1\}, (u_i,u_{i+1})\in E\}.$$

We expect the diameter of ${\mathcal B}_n(\Gamma_{p,R}^{\cn}(P))$ to grow linearly, at least for $R\geq R^{\cn}_{max} (p)$. 
What can we say about the asymptotic shape of the set $$\frac{1}{n}\Big({\mathcal B}_n(\Gamma_{p,R}^{\cn}(P))+[0,1]^2\Big)$$
when $n$ tends to the infinity? When $R$ goes to infinity, does this shape become close to the ball of the corresponding ${\mathcal L}_p$-norm? And what about the counterpart of this shape for the nearest neighbor model?

\paragraph{Gilbert's disc model conditioned on other lattices.}
In this article, we focused on Gilbert's disc model conditioned on the square lattice. The definitions naturally extend to other lattices (triangular or hexagonal lattice, $d$-dimensional lattices...). What are then the values of the possible connectivity radius, of the critical radius, and of the total connectivity radius?

\bibliographystyle{alpha}
\newcommand{\etalchar}[1]{$^{#1}$}

\section{Appendix}
\subsection{Proof of Lemma~\ref{lem:Step5}}\label{App:Lem1}

    Let $u$ be a path of $\Gamma_{p,R}^{\nn}(P)$.
    Firstly, the pattern observed between the points $u_0$ and $u_4$ is one of the patterns observed for the case $a_{k+1}-a_k=3$ at the end of section~\ref{sec:LBR_MinCN}.
    So by similar arguments used for Equation~\eqref{eq:StairsIneq}, we have
    \begin{equation}\label{eq:Y_(2,1)}
        Y_{2,1} \leq 1+ \left[(2R)^p - \left(1 - (X_{1,0}-1) \right)^p\right]^{\frac{1}{p}}.  
    \end{equation}
    
    Since $Y_{1,3}\leq Y_{2,1} + 3R$, we have
    \begin{equation}\label{eq:R<R(p)L1.1}
        (Y_{1,3} - 3) - (X_{1,0} - 1) \leq \big[(2R)^p - \left(1 - (X_{1,0}-1) \right)^p\big]^{\frac{1}{p}} + 3R - 2 - (X_{1,0}-1).
    \end{equation}

    \paragraph{Case \boldmath$p\leq \displaystyle\frac{\ln(2)}{\ln(\frac{4}{3})}$.}
    By Lemma~\ref{lem:R(p)}, $\min\{1/2,R(p)\} = 1/2$. Using the inequality $R<1/2$, we obtain
    \begin{equation}\label{eq:R<R(p)L1.2}
        \big[(2R)^p - \left(1 - (X_{1,0}-1) \right)^p\big]^{\frac{1}{p}} + 3R - 2 - (X_{1,0}-1) < \big[1 - \left(1 - (X_{1,0}-1) \right)^p\big]^{\frac{1}{p}} - \frac{1}{2} - (X_{1,0}-1).
    \end{equation}
    To conclude, we show that for all $x\in[0,R]$, the following inequality holds:
    $$1 \leq (1-x)^p + \left(\frac{1}{2}+x \right)^p.$$
    To this purpose, we set the function $g(x)=(1-x)^p + \left(\frac{1}{2}+x\right)^p$, for $x\in[0,R]$.
    
    By studying its derivative, we obtain that $g$ admits a minimum in $x= 1/4$. Thus, for all $x\in[0,R]$,
    $$g(x)\geq g\left(\frac{1}{4}\right) = 2\left(\frac{3}{4}\right)^p\geq 2\left(\frac{3}{4}\right)^{\frac{\ln(2)}{\ln(\frac{4}{3})}}=1.$$
    
    As a consequence, taking $x = X_{1,0}-1$, we obtain that, for any  $x\in[0,R]$, 
    $$\big[1 - \left(1 - (X_{1,0}-1) \right)^p\big]^{\frac{1}{p}} - \frac{1}{2} - (X_{1,0}-1) \leq 0.$$
    And, by Equation~\eqref{eq:R<R(p)L1.2} and since $[1,1+R]$ is compact, there exists $c_1 > 0$,
    $$ \big[\big(2R\big)^p - (1-(X_{1,0}-1))^p\big]^{\frac{1}{p}} +3R -2 -(X_{1,0}-1) \leq -c_1<0,$$
    and so, by Equation~\eqref{eq:R<R(p)L1.1}, $(Y_{1,3} - 3) - (X_{1,0} - 1) \leq -c_1 < 0.$

    \paragraph{Case \boldmath $p> \displaystyle\frac{\ln(2)}{\ln(\frac{4}{3})}$.}
    
    By Lemma~\ref{lem:R(p)}, $\min\{1/2,R(p)\} = R(p)$, hence $R< R(p)$, and so
    \begin{align}
    &  \big[(2R)^p - \left(1 - (X_{1,0}-1) \right)^p\big]^{\frac{1}{p}} + 3R - 2 - (X_{1,0}-1) \nonumber\\
    &\quad < \big[(2R(p))^p - \left(1 - (X_{1,0}-1) \right)^p\big]^{\frac{1}{p}} + 3R(p) - 2 - (X_{1,0}-1) \label{eq:R<R(p)L1.3} .
    \end{align}
    To conclude, we show that for all $x\in[0,R]$, the following inequality holds:
    $$(2R(p))^p \leq (1-x)^p + (2+x-3R(p))^p.$$
    To this purpose, we set the function $g(x)=(1-x)^p+(2+x-3R(p))^p$, for $x\in[0,R]$.
    
    By studying its derivative, we obtain that $g$ admits a minimum in $(3R(p)-1)/2$. Thus, for all $x\in[0,R]$,
    $$g(x)\geq g\left(\frac{3R(p)-1}{2}\right) = 2\left(\frac{3-3R(p)}{2}\right)^p.$$
    As $R(p) < 1/2$ and by its definition, see Lemma~\ref{lem:R(p)},
    $$2\left(\frac{3-3R(p)}{2}\right)^p\geq \bigg(\frac{3-3R(p)}{2}\bigg)^p + \bigg(\frac{2-R(p)}{2}\bigg)^p=(2R(p))^p.$$
    
    As a consequence, taking $x = X_{1,0}-1$, we obtain that, for any $x\in[0,R]$,
    $$ \big[\big(2R(p)\big)^p - (1-(X_{1,0}-1))^p\big]^{\frac{1}{p}} +3R(p) -2 -(X_{1,0}-1) \leq0.$$
    And, by Equation~\eqref{eq:R<R(p)L1.3} and since $[1,1+R]$ is compact, there exists $c_1 > 0$,
    $$ \big[\big(2R\big)^p - (1-(X_{1,0}-1))^p\big]^{\frac{1}{p}} +3R -2 -(X_{1,0}-1) \leq -c_1<0,$$
    and so, by Equation~\eqref{eq:R<R(p)L1.1}, $(Y_{1,3} - 3) - (X_{1,0} - 1) \leq -c_1 < 0.$

\subsection{Proof of  Lemma~\ref{lem:Step3+odd}}\label{App:Lem2}

    Let $u$ be a path of $\Gamma_{p,R}^{\nn}(P)$. Then we have 
    $$\|P_{1,3}-P_{2,2}\|_p^p = (X_{1,3}-X_{2,2})^p + (Y_{1,3}-Y_{2,2})^p \leq (2R)^p.$$
    
    Using $Y_{1,3}\in[3,4]$ and $X_{2,2}\in[2,3]$, we obtain  
    $$(2-X_{1,3})^p +(3-Y_{2,2})^p\leq (2R)^p.$$
    
    Since $Y_{2,2} < Y_{2,1} + R(p)$ and by Equation~\eqref{eq:Y_(2,1)}, it follows that
    \begin{align*}
        X_{1,3}&\geq 2 - \big((2R)^p-(3-Y_{2,2}))^p\big)^{\frac{1}{p}}\\
        &\geq 2 - \bigg[(2R)^p-\bigg(2-R-\big[(2R)^p-(1-(X_{1,0}-1))^p\big]^{\frac{1}{p}}\bigg)^p\bigg]^{\frac{1}{p}}.
	\end{align*}
	Since $X_{-1,2} \geq X_{1,3} - 3$, it implies that
	\begin{align}
        (1 + (-1) - X_{-1,2}) - (X_{1,0}-1) &\leq  3R- X_{1,3} - (X_{1,0}-1)\\
        & \leq \left[(2R)^p-\left(2-R-\left[(2R)^p-(1-(X_{1,0}-1))^p\right]^{\frac{1}{p}}\right)^p\right]^{\frac{1}{p}}\nonumber \\
        &+ 3R - 2 - (X_{1,0}-1)\label{eq:R<R(p)L2.1} \\
        & < \left[(2R(p))^p-\left(2-R(p)-\left[(2R(p))^p-(1-(X_{1,0}-1))^p\right]^{\frac{1}{p}}\right)^p\right]^{\frac{1}{p}} \nonumber \\
        &+ 3R(p) - 2 - (X_{1,0}-1)\label{eq:R<R(p)L2.2}
    \end{align}
    
    To conclude, we show that for all $x\in[0,R]$, the following inequality hold:
    $$[(2R(p))^p - (2+x-3R(p))^p]^{\frac{1}{p}} + [(2R(p))^p - (1-x)^p]^{\frac{1}{p}}< 2-R(p).$$

    To this purpose, we set the function 
    $$g(x) =  [(2R(p))^p - (1 - x)^p]^{\frac{1}{p}} + [(2R(p))^p - (2 + x - 3R(p))^p]^{\frac{1}{p}} ,$$
    for $x\in[0,R]$. By studying its derivative, we obtain that $g$ admits a maximum in $(3R(p)-1)/2$. Thus, for all $x\in[0,R]$,
    $$g(x)\leq g\left(\frac{3R(p)-1}{2}\right) = 2\left[(2R(p))^p - \left(\frac{3-3R(p)}{2}\right)^p\right]^{\frac{1}{p}}.$$
    
    By Equation~\eqref{eqRmin4V}, we obtain 
    $$ g\left(\frac{3R(p)-1}{2}\right) = 2-R(p).$$
    
    As a consequence, taking $x=X_{1,0}-1$, we obtain that, for any $X_{1,0}\in[1,1+R]$,
    $$\bigg[(2R(p))^p-\bigg(2-R(p)-\big[(2R(p))^p-(1-(X_{1,0}-1))^p\big]^{\frac{1}{p}}\bigg)^p\bigg]^{\frac{1}{p}}+ 3R(p) - 2 - (X_{1,0}-1)\leq 0.$$
    And, by Equation~\eqref{eq:R<R(p)L2.2} and since $[1,1+R]$ is compact, there exists $c_2 > 0$ such that
    $$\bigg[(2R)^p-\bigg(2-R-\big[(2R)^p-(1-(X_{1,0}-1))^p\big]^{\frac{1}{p}}\bigg)^p\bigg]^{\frac{1}{p}}+ 3R - 2 - (X_{1,0}-1)\leq -c_2,$$ 
    and so, by Equation~\eqref{eq:R<R(p)L2.1}, $(1 + (-1) - X_{-1,2}) - (X_{1,0} - 1) \leq -c_2$.

\end{document}